\def\5n{\negthinspace \negthinspace \negthinspace \negthinspace \negthinspace }
\def\4n{\negthinspace \negthinspace \negthinspace \negthinspace }
\def\3n{\negthinspace \negthinspace \negthinspace }
\def\2n{\negthinspace \negthinspace }
\def\1n{\negthinspace }
 \def\sB{\mathscr{B}}
\def\dbE{\mathbb{E}}   \def\cE{{\cal E}}  
\def\dbF{\mathbb{F}} \def\sF{\mathscr{F}}  \def\cF{{\cal F}}
   \def\cM{{\cal M}}  
   \def\cO{{\cal O}}  
\def\dbP{\mathbb{P}}   \def\cP{{\cal P}}  
   \def\cQ{{\cal Q}}  
\def\dbR{\mathbb{R}}     
 \def\sT{\mathscr{T}}
   \def\cW{{\cal W}}     
\def\dbY{\mathbb{Y}}   \def\cY{{\cal Y}}      
\def\dbZ{\mathbb{Z}}   \def\cZ{{\cal Z}}      
\def\Om{\Omega}           \def\Th{\Theta} \def\sgn{\mathop{\rm sgn}}
\def\ss{\smallskip}                
\def\ms{\medskip}                
\def\ds{\displaystyle}           
\def\ra{\rightarrow}      
\def\no{\noindent}        \def\q{\quad}                      
\def\ns{\noalign{\ss}}    \def\qq{\qquad}                    
    \def\hb{\hbox}                     
         \def\rf{\eqref}                    
  \def\deq{\triangleq}               
            \def\({\Big (}
\def\les{\leqslant}                  \def\){\Big )}
\def\leq{\leqslant}       \def\geq{\geqslant}
\def\ges{\geqslant}       \def\esssup{\mathop{\rm esssup}}   \def\[{\Big[}
\def\cl{\overline}           \def\]{\Big]}
\def\h{\widehat}                   \def\cd{\cdot}
\def\wt{\widetilde}              \def\cds{\cdots}
           \def\cl{\overline}                 
\def\nn{\nonumber}        \def\ts{\times}                      
\def\a{\alpha}        \def\G{\Gamma}   \def\g{\gamma}   \def\O{\Omega}   \def\o{\omega}
\def\b{\beta}         \def\D{\Delta}   \def\d{\delta}        
         \def\Th{\Theta}  \def\th{\theta}    
\def\e{\varepsilon}   \def\L{\Lambda}  \def\l{\lambda}        \def\n{\nu}
    \def\t{\tau}       \def\i{\infty}   \def\k{\kappa}
\theoremstyle{plain}
\theoremstyle{rmk}
\newtheorem{theorem}{Theorem}[section]
\newtheorem{definition}[theorem]{Definition}
\newtheorem{proposition}[theorem]{Proposition}
\newtheorem{lemma}[theorem]{Lemma}
\newtheorem{remark}[theorem]{Remark}
\newtheorem{assumption}{Assumption}
\begin{document}

\title{\bf Mean-field backward stochastic Volterra integral equations: well-posedness and related particle systems}

\author{Tao Hao\thanks{School of Statistics and Mathematics, Shandong University of Finance and Economics, Jinan 250014, China (Email: {\tt taohao@sdufe.edu.cn}).
TH is  supported by Natural Science Foundation of Shandong Province (Grant Nos.ZR2024MA039, ZR2022MA029),
and NSFC (Grant No. 72171133).}~,~~~
Ying Hu\thanks{Univ. Rennes, CNRS, IRMAR - UMR 6625, F-35000, Rennes, France (Email: {\tt ying.hu@univ-rennes1.fr}). YH is partially supported by Lebesgue Center of Mathematics ``Investissements d'avenir" program-ANR-11-LABX-0020-01, by CAESARS-ANR-15-CE05-0024 and by MFG-ANR16-CE40-0015-01.}~,~~~
 Jiaqiang Wen\thanks{Corresponding author. Department of Mathematics, Southern University of Science and Technology, Shenzhen 518055, China (Email: {\tt wenjq@sustech.edu.cn}). JW is supported  by National Natural Science Foundation of China (Grant No. 12571478), Guangdong Basic and Applied Basic Research Foundation (Grant No. 2025B151502009), and Shenzhen Fundamental Research General Program grant (Grant
 No. JCYJ20230807093309021).}
 %
}

\maketitle

\no\bf Abstract: \rm
 This paper studies the mean-field backward stochastic Volterra integral equations (mean-field BSVIEs) and  associated particle systems.
 We establish the existence and uniqueness of solutions to mean-field BSVIEs when  the generator $g$ is of linear growth or  quadratic growth with respect to $Z$, respectively.
Moreover, the propagation of chaos is analyzed for the corresponding particle systems under two conditions.  When $g$ is of  linear growth in $Z$,   the convergence rate is proven to be of order $\cQ(N)$. 
When  $g$ is of  quadratic growth in $Z$ and is independent of the law of $Z$,  we not only establish the convergence of the particle systems but also derive a convergence rate of order   $\cO(N^{-\frac{1}{2\l}})$, where  $\l>1$.

\ms

\no\bf Key words:\rm\
Backward stochastic Volterra integral equation; mean-field;  particle system; quadratic growth; convergence rate

\ms

\no\bf AMS subject classifications. \rm  60H10; 60H30


\section{Introduction}\label{sec 1}

    Let  $(\Omega,\sF,\mathbb{P})$ be a complete probability space, where a $d$-dimensional standard Brownian motion $W$ is defined, and the completed augmented natural filtration is denoted by $\mathbb{F}=\{\sF_t\}_{0\leq t\leq T}$. In 1990, Pardoux and Peng \cite{Pardoux-Peng-1990}  first established the existence and uniqueness of solutions to nonlinear backward stochastic differential equations (BSDEs, for short) under Lipschitz assumptions.
    Since then, this theory has found extensive applications in stochastic optimal control, partial differential equations, and financial mathematics. One important application of BSDEs in economics is their use in modeling stochastic differential utility (SDU, for short), as illustrated below:
       \begin{equation*}
         Y(t)=\dbE\[ \xi+\int_t^T h(s,Y(s))ds \Big|\sF_t\],
        \end{equation*}
     where $\xi$ is a square-integrable random variable, $h:[0,T] \times \mathbb{R} \rightarrow \mathbb{R}$ is an appropriate function, and $\mathbb{E}[\cdot | \sF_t]$ represents the conditional expectation under the probability $\dbP$.
     This SDU  can be interpreted as the solution to a particular BSDE with terminal value $\xi$ and  generator $h$ (see Duffie and Epstein \cite{Duffie-Epstein-1992}). Moreover, El Karoui, Peng, and Quenez \cite{El Karoui-Peng-Quenez-1997} extended their work by introducing the concept of a general SDU. Nonetheless, the standard expected utility function (EUF, for short) is typically expressed as
        \begin{equation*}
           Y(t)=\dbE\[\xi e^{\g(T-t)}+\int_t^Tu(c(s))e^{-\g(s-t)}ds\Big|\sF_t\],
        \end{equation*}
     where $u(\cdot)$ is a utility function, $c(\cdot)$ is a consumption process, and $\gamma$ is the discount rate.
     The standard EUF, which includes a consumption process, a discount rate, and a utility function, emphasizes that both the terminal value $\overline{\psi}(t) \equiv \xi e^{\gamma(T-t)}$ and the generator $\overline{g}(t,s) \equiv u(c(s))e^{-\gamma(s-t)}$ are time-dependent and have  memory effects, i.e., a feature not naturally captured by classical BSDEs. To better describe this phenomenon, many researchers have turned it to be backward stochastic Volterra integral equations (BSVIEs, for short) of the following form:
        \begin{equation*}\label{1.0}
           Y(t)=  \psi(t)+\int_t^T g(t,s,Y(s),Z(t,s))ds -\int_t^TZ(t,s)dW(s),\q t\in[0,T].
        \end{equation*}
      For this class of equations, Lin \cite{Lin-2002} established the existence and uniqueness of adapted solutions under Lipschitz conditions,
      Yong \cite{Yong-2008} introduced a class of general BSVIEs and proved their  well-posedness.
      Readers may refer to  \cite{Hamaguchi-AAP-2025,Hamaguchi-JDE-2023,Shi-Wang-2012,Shi-Wang-Yong-2013,Shi-Wen-Xiong-2020,Wang-SD-2021,Wang-Sun-Yong-2019,Wang-2020,Wang-Yong-2015}   for new developments of BSVIEs and their applications.
  Note that    although the Lipschitz condition facilitates mathematical derivations, it often restricts the range of interesting applications, such as risk-sensitive problems.
    To address this limitation, Peng \cite{Peng-1998}   proposed relaxing the Lipschitz assumption on the generator  $g$  when studying the existence and/or uniqueness of adapted solutions to BSDEs.  This suggestion has inspired numerous research efforts in the field.
   For instance, Lepeltier and San Martin \cite{Lepeltier-Martin-1997} studied the case where the generator    $g$ is continuous in $(Y,Z)$ and has linear growth.
   Kobylanski \cite{Kobylanski-2000}, Briand and Hu \cite{Briand-Hu-2006, Briand-Hu-2008}, as well as Bahlali, Eddahbi, and Ouknine \cite{Bahali-Eddahbi-Ouknine-2017},
   worked on one-dimensional BSDEs with quadratic growth (QBSDEs).
    Hu and Tang \cite{Hu-Tang-2016}, Xing and Zitkovic \cite{Xing-Zitkovic}, and Fan, Hu, and Tang \cite{Fan-Hu-Tang-2023} focused on multi-dimensional QBSDEs.
For BSVIEs,  similar efforts have been made to relax the Lipschitz assumption on the generator
$g$.  For instance, Shi and Wang \cite{Shi-Wang-2012} investigated general BSVIEs under non-Lipschitz conditions. Moreover, Wang, Sun, and Yong \cite{Wang-Sun-Yong-2019} extended Lin's work \cite{Lin-2002} from linear growth to quadratic growth and established the well-posedness of both local and global solutions for BSVIEs with quadratic growth.

However, to the best of our knowledge, no fundamental results have yet been established for general mean-field BSVIEs (i.e.,  the generator $g$ depends on the distribution of the solution $(Y,Z))$, let alone for related particle systems. Note that the work of Shi, Wang, and Yong \cite{Shi-Wang-Yong-2013} dealt with classical mean-field BSVIEs, in which the generator $g$ depends on the expectation of the solution $(Y,Z)$; meanwhile, Jiang, Li, and Wei \cite{Jiang-Li-Wei-2023}  discussed a  special case within the general setting.

In this paper, motivated by the wide applications of particle systems, we study general mean-field BSVIEs. Specifically, we focus on analyzing the convergence rate of particle systems associated with the following BSVIEs:
      For $i=1,\cdots,N$,
         \begin{equation}\label{1.3}
         \begin{aligned}
          Y^{N,i}(t)&=\psi^i(t)+\int_t^Tg(t, s,Y^{N,i}(s),Z^{N,i,i}(t,s), \mu^{N}(s),\nu^N(t,s) )ds\\
                &\q-\int_t^T\sum\limits_{j=1}^NZ^{N,i,j}(t,s)dW^j(s),\q t\in[0,T],
            \end{aligned}
         \end{equation}
      where $\{W^j;1\leq j\leq N\}$ are $N$ independent $d$-dimensional standard Brownian motions on the complete probability space  $(\O,\sF,\dbP)$, and
            \begin{equation}\label{5.3}
            \mu^N(s)\deq  \frac{1}{N}\sum\limits_{i=1}^N\delta_{Y^{N,i}(s)}\q\hb{and}\q
             \nu^N(t,s)\deq  \frac{1}{N}\sum\limits_{i=1}^N\delta_{Z^{N,i,i}(t,s)}.
              \end{equation}
      The equation above is a multi-dimensional BSVIE. As the system size becomes very large, i.e., as $N \rightarrow \infty$, based on the theory of convergence under the Wasserstein distance for the empirical measure (as discussed in Fournier and Guillin \cite{Fournier-Guillin-2015}), the particle system described in Eq. (\ref{1.3}) will converge in structure to the following mean-field BSVIE:
       \begin{equation}\label{1.1}
                   Y(t)=\psi(t)+ \int_t^Tg(t,s, Y(s),Z(t,s), \dbP_{Y(s)}, \dbP_{Z(t,s)})ds-\int_t^TZ(t,s)dW(s),\q  t\in[0,T].
       \end{equation}
The process $\psi(\cdot)$ and the mapping $g$ are referred to as the free term and the generator, respectively. We denote equation (\ref{1.1}) as a general mean-field BSVIE. The term ``general''  here indicates that the generator
$g$ depends on the distributions of  $Y(s)$  and that of  $Z(t,s)$, rather than merely their expectations. This convergence implies  that various analyses related to the particle system in equation (\ref{1.3}), such as error estimates of solutions and asymptotic Nash equilibrium, can be effectively addressed using the framework of equation (\ref{1.1}).

The main goal of this paper is to study  the  convergence and  its rate for the particle system described by  (\ref{1.3})-(\ref{5.3}).
Before studying our main results, we first establish the well-posedness of  BSVIE \eqref{1.1}. To obtain more general results, we consider both linear growth and quadratic growth cases.
For the linear growth case, we initially prove the existence and uniqueness of the solution to BSVIE \eqref{1.1} by parameterizing the time variable $t$ and applying the contraction mapping principle (see \autoref{th3.1}). Subsequently, we derive a convergence rate of order $\cQ(N)$ by combining the $t$-parameterization method with convergence theory in the Wasserstein distance for empirical measures (see \autoref{th4.2}).
{
	Note that when $g$ depends on the law of $Z(\cd,\cd)$, we need to work within the space 
$L^p_{\dbF}(\D [0,T];\dbR^d)$ with $1<p<2$, because we only have that  $\int_{0}^T\int_t^T\dbE|\overline{Z}^{i}(t,s)|^2 dsdt<\i$
(see (\ref{4.100})). However, when $g$ is independent of the law of $Z(\cd,\cd)$, we can work in the space
$L^p_{\dbF}(\D [0,T];\dbR^d)$ for $p>1$ (see \autoref{pro4.44}).
Furthermore, even for general mean-field BSDEs with Lipschitz conditions, our result concerning the convergence rate is new. To the best of our knowledge, all previous works assume that the generator is independent of the law of $Z(\cdot)$.
}
For the quadratic growth case, we first establish the existence and uniqueness of global solutions to BSVIE \eqref{1.1} using BMO martingale techniques, considering the generator $g$ under both bounded and unbounded conditions with respect to the distribution   $\dbP_{Z(t,s)}$  (see \autoref{th3.4} and \autoref{th 4.3}). {Next, under the extra assumption that $g$ is independent of the law of $Z(\cd,\cd)$,}
we prove the  convergence for the particle system, using BMO techniques and Girsanov's theorem (see \autoref{th 5.1111}).
Finally, based on these convergence results, we derive a convergence rate of order $\cO(N^{-\frac{1}{2\l}})$  with $\l>1$ for the particle system (see \autoref{th 5.6}).
In summary, the main innovations of this work include:
(i) Providing, for the first time, the convergence and precise convergence rates of particle systems for mean-field BSVIEs.
(ii) Allowing the generator $g$ to depend on the distributions of $Y(t)$ and $Z(t,s)$ rather than merely on their expectations.
(iii) Allowing the generator $g$ to be of  quadratic growth  in $Z(t,s)$.
(iv) Establishing new existence and uniqueness results when the generator $g$ is of linear growth in $Z(t,s)$.
(v)  \autoref{ass4.1-1} and \autoref{ass4.1} here are weaker than the condition (A2) of Wang, Sun, and Yong \cite{Wang-Sun-Yong-2019},  as we relax the boundedness requirements of the generator $g$  in both $(t, s)$ and its dependence on distributions.

This paper is organized as follows.   \autoref{sec 2}  introduces some notations and spaces.  \autoref{sec 3} presents the solvability of general  mean-field BSVIEs.  \autoref{sec 5} addresses the convergence and the convergence rate  of associated particle systems.
    Finally,  several supporting results are presented in  \autoref{appendix}.

\section{Preliminaries}\label{sec 2}

       This section introduces some useful spaces and properties of BMO martingales.
       We begin with the introduction of the upper  triangle domains. For $0\leq a < b \leq T$,  denote
        \begin{equation*}
        \begin{aligned}
        \D[a,b]&=\Big\{(t,s)\in[a,b]\Big|a\leq t\leq s\leq b\Big\}.
        \end{aligned}
        \end{equation*}
     In what follows, we denote $\d_0$ the Dirac measure at $0$ and $\sT[a,b]$  the set of all $\dbF$-stopping times $\t$ valued in $[a,b]$.
     For almost all $t\in[0,T]$ and for stopping time $\t\in\sT[t,T]$,
by $\widehat{\dbE}^t_\t[\cd]:=\widehat{\dbE}^t[\cd|\sF_\t]$ we denote the conditional expectation
  on the $\sigma$-field $\sF_\t$ under the probability $\widehat{\dbP}^t$. One can
understand $ \widetilde{\dbE}^t_\t[\cd],\ \tilde{\tilde{\dbE}}^t_\t[\cd],\ \overline{\dbE}^t_\t[\cd],\  \breve{\dbE}^t_\t[\cd]$ in the same sense.

\subsection{ Useful spaces   }
       Let us introduce spaces of random variables and  stochastic processes. To avoid repetition,
       all processes $(t,\o)\mapsto f(t,\o)$ are assumed to be at least $\sB[a,b]\otimes \sF_b$-measurable  without further mention, where  $\sB[a,b]$ is the Borel-field on $[a,b].$   For $p,q \geq1,$ and $r>0$
\begin{align*}
 L^p_{\sF_b}(\O;\dbR^n)=&\ \Big\{ \xi:\O\ra \dbR^n\Big| \xi\ \text{is}\ \sF_b\text{-measurable},
\|\xi\|^p_{L^p_{\sF_b}(\O)}:= \dbE[|\xi|^p]<\i  \Big\},\\
  L^{q,p}_{\sF_b}([a,b]; \dbR^n)=&\ \Big\{ f:\O\ts[a,b]\ra \dbR^n\Big|
         \dbE\(\int_a^b|f(s)|^q ds\)^\frac{p}{q}  < \i  \Big\},\\
   L^{\i,p}_{\sF_b}([a,b]; \dbR^n)=&\ \Big\{ f:\O\ts[a,b]\ra \dbR^n\Big|
         \dbE\(\esssup\limits_{t\in[a,b]}|f(t)|^p\) <\i  \Big\},\\
  L^p_{\sF_b}(\O;C([a,b]; \dbR^n))=&\ \Big\{ f:\O\ts[a,b]\ra \dbR^n\Big|t\mapsto f(t,\o)\ \text{is\ continuous},
              \dbE\(\sup\limits_{t\in[a,b]}|f(t)|^p\) <\i  \Big\},\\
   L^\i_{\sF_b}([a,b]; \dbR^n)\deq  &\ L^{\i,\i}_{\sF_b}([a,b]; \dbR^n)=\Big\{ f:\O\ts[a,b]\ra \dbR^n\Big|
         \esssup\limits_{(\o, t)\in\O\ts[a,b]}|f(t)|  <\i  \Big\},\\
         \cE^r_{\sF_b}(\O; \dbR^n )=&\ \Big\{ \xi\in L^1_{\sF_b}(\O;\dbR^n)\Big | \  \dbE[\exp(r|\xi|)]<\i  \Big\},\\
             \cE^r_{\sF_b}(\O;L^p([a,b];\dbR^n))=&\ \Big\{f:\O\ts[a,b]\ra \dbR^n\Big|
           \big(\int_a^b|f(s)|^pds\big)^{\frac{1}{p}}\in \cE^r_{\sF_b}(\O; \dbR^n )  \Big\}.
\end{align*}
In addition, the subset of $L^{q,p}_{\sF_b}([a,b]; \dbR^n)$, consisting of all $\dbF$-progressively measurable processes,
          is denoted by $L^{q,p}_{\dbF}([a,b]; \dbR^n)$.  All the $\dbF$-progressively measurable
         version of the above other spaces  can be understood in the same way. In analogy with
        the space  $ L^{q,p}_{\dbF}([a,b]; \dbR^n),$ we set
\begin{align*}
  L^{q,p}_{\dbF}(\D[a,b]; \dbR^n)=\Big\{
          &  f(\cd,\cd):\O\ts\D[a,b]\ra \dbR^{n}\ |\ f(\cd,\cd)\ \text{is}\  \sB(\D[a,b])\otimes \sF_b
             \text{-measurable\  such }    \text{that}\\
             & s\mapsto f(t,s)  \  \text{is}\ \dbF \text{-progressively\ measurable\ for\ all}\ t\in[a,b],\ \text{and}\\
           & \|f(\cd,\cd)\|^p_{L^{q,p}_{\dbF}(\D[a,b])}=\dbE\(\int_a^b\int_t^b|f(t,s)|^q dsdt\)^\frac{p}{q}  < \i \Big\}.
\end{align*}
The space $L^{q,p}_{\dbF}( [a,b]\ts[b,c]; \dbR^n)$ can be defined similarly. Meanwhile, for simplicity of presentation,  we write
         \begin{equation*}
         L^p_{\dbF}([a,b];\dbR^n)=L^{p,p}_{\dbF}([a,b];\dbR^n),\ \forall p\ges1.
         \end{equation*}
     Similarly, one can understand $L^p_{\sF_T}(\D[a,b];\dbR^n),$ $L^p_{\dbF}(\D[a,b];\dbR^n),$
        and so on.
%
              Finally, we want to introduce the following spaces for $Z(\cd,\cd)$:  For  $p,q\geq1$,
\begin{align*}
 &    L^\i([a,b];L^{q,p}_{\dbF}([\cd,b]; \dbR^n))
           =\Big\{ f:\D[a,b]\ts \O\ra \dbR^n\ |\ \text{for all } t\in[a,b], f(t,\cd)\in  L^{q,p}_{\dbF}([t,b];\dbR^n), \text{and}\\
& \qq\qq\qq\qq\qq\qq\          \|f(\cd, \cd)\|^p_{ L^{\i,q,p}_{\dbF}(\D[a,b])}:= \esssup_{t\in[a,b]}\dbE\(\int_t^b|f(t,s)|^qds\)^\frac{p}{q}<\i\Big\}.
        \end{align*}
           In particular,  if $f(\cd,\cd)\in  L^\i([a,b];  L^{1,\i}_{\dbF}([\cd,b];\dbR^n))$, by
           $\|f(\cd,\cd)\|_{L^{\i}_{\dbF}(\D[a,b])}$ we denote its norm, i.e.,
           \begin{equation*}
             \|f(\cd,\cd)\|_{L^{\i}_{\dbF}(\D[a,b])}\deq \esssup_{(t,\o)\in[a,b]\ts\O}  \Big|\int_{t}^{b}f(t,s)ds \Big|.
           \end{equation*}
            For brevity, we write $L^p_{\mathbb{F}}(\Delta[0,T];\mathbb{R})$, $L^{\infty}_{\mathbb{F}}([0,T];\mathbb{R})$, and $L^p_{\mathbb{F}}([0,T];\mathbb{R})$ as $L^p_{\mathbb{F}}(\Delta[0,T])$, $L^{\infty}_{\mathbb{F}}[0,T]$, and $L^p_{\mathbb{F}}[0,T]$, respectively.

       \subsection{BMO martingale}

    For $0\leq a < b  \leq T$, let $M=\{M(t),\sF_t; a\leq  t\leq b\}$ be a uniformly integrable martingale with $M(0)=0$, and we set, for $q\geq2$,
           $$\|M\|_{\text{BMO}_{\dbP,q}[a,b]}\deq \sup_{\t\in\sT[a,b]}
           \bigg\|\dbE_\t\Big[\(\langle M\rangle(b)-\langle M \rangle(\t)\)^\frac{q}{2}\Big]^{\frac{1}{q}}\bigg\|_{\i}.$$
             We write $ \|M\|_{\text{BMO}_{\dbP,2}[a,b]}$ as   $ \|M\|_{\text{BMO}_{\dbP}[a,b]}$ for short.
             If $\|M\|_{\text{BMO}_\dbP[a,b]}<\i$, then $M$ is called a \emph{BMO martingale} on $[a,b].$
          The space of all BMO martingales on $[a,b]$ is denoted by $\text{BMO}_\dbP[a,b]$.
           Note that $\text{BMO}_\dbP[a,b]$ is a Banach space under the norm $\| \cd \|_{\text{BMO}_\dbP[a,b]}.$

           We now recall the following   properties of BMO martingales, which can be found in He, Wang, Yan \cite{He-Wang-Yan-1992}
           and  Kazamaki \cite{Kazamaki-1994}.
\begin{itemize}
	\item  [$\bullet$] Denote by $\cE(M)$ the Dol\'{e}ans--Dade exponential of a continuous local martingale  $M$, i.e.,
	$\cE(M)_a^b = \exp\{M_t-\frac{1}{2} \langle M\rangle_t\}$,    for any $t \in  [a, b]$.
	If $M\in\text{BMO}_\dbP[a,b]$,     then $\cE(M)$ is a uniformly integrable martingale.
	\item[$\bullet$] The energy inequality: If $M\in\text{BMO}_\dbP[a,b]$, then for any positive integer $n$, one has
	$$
		\dbE[(\langle M\rangle(b))^n]\leq n!\|M\|_{\text{BMO}_\dbP[a,b]}^{2n}.
	$$
	\item[$\bullet$] For {$L>0$}, there exists two positive constants $c_1$ and $c_2$ depending on {$L$} such that, for any
	BMO martingale $M$ on $[a,b]$ and any one-dimensional BMO martingale $N$ such that {$\|N\|_{_{ \text{BMO}_\dbP[a,b]}}\les L$}, we have
	\begin{equation}\label{2.5}
		c_1\|M\|_{ \text{BMO}_\dbP[a,b]}\les\|\widetilde{M}\|_{ \text{BMO}_{\widetilde{\dbP}}[a,b]}\les c_2\|M\|_{ \text{BMO}_\dbP[a,b]},
	\end{equation}
	where $\widetilde{M}\deq M-\langle M,N\rangle$ and $d\widetilde{\dbP}\deq\cE(N)_a^{b}d\dbP$.
	\item [$\bullet$]   Let $p \in(1, \infty)$ and let $M$ be a one-dimensional continuous BMO martingale.
	If $\|M\|_{\text{BMO}_\dbP[a,b]}<\Phi(p)$, then $\cE(M)$ satisfies the reverse H\"{o}lder's inequality:
	\begin{equation}\label{2.2}
		\dbE_\tau\left[\cE(M)_\tau^{\infty}\right]^p \leq c_p,
	\end{equation}
	for any stopping time $\tau$, with a positive constant $c_p$ depending only on $p$.
\end{itemize}

               Next, we introduce some spaces associated with BMO martingales. Let $0\leq a < b < c\leq T$, denote
\begin{align*}
         \cZ^2_{\dbF}([a,b];\dbR^n)   =&\ \Big\{ f: \O\ts[a,b]\ra \dbR^n \big| f(\cd)\in L^{2}_{\dbF}([a,b]; \dbR^n),   \\
                      &\q     \|f(\cd)\|^2_{\cZ^2_{\dbF}[a,b]}\deq \sup\limits_{\t\in\sT[a,b]}\Big \|\dbE_\t\[\int_\t^b|f(s)|^2ds\]\Big \|_\i<\i    \Big\},\\
          \cZ^2_{\dbF}(\D[a,b];\dbR^n)   =&\ \Big\{ f: \O\ts\D[a,b]\ra \dbR^n \big| f(\cd,\cd)\in  L^{2}_{\dbF}(\D[a,b]; \dbR^n),   \\
        &\q  \|f(\cd,\cd)\|^2_{\cZ^2_{\dbF}(\D[a,b])}\deq\esssup\limits_{t\in[a,b]}
          \|f(t,\cd)\|^2_{\cZ^2_{\dbF}[t,b]}<\i    \Big\},\\
%
%
          \cZ^2_{\dbF}([a,b]\ts [b,c];\dbR^n)   =&\  \Big\{ f: \O\ts [a,b]\ts [b,c]\ra \dbR^n \big| f(\cd,\cd)\in
        L^{2}_{\dbF}( [a,b]\ts [b,c];\dbR^n),   \\
         &\q  \|f(\cd,\cd)\|^2_{\cZ^2_{\dbF}([a,b]\ts[b,c])}\deq
         \esssup\limits_{t\in[a,b]} \|f(t,\cd)\|^2_{\cZ^2_{\dbF}[b,c]}<\i\Big\}.
\end{align*}

%
\begin{remark} \rm
For $0\leq a \leq t\leq b\leq T$,
the process $\int_a^\cd  f(r)dW(r)$ (denoted by $f\cd W$) belongs to $\text{BMO}_{\dbP}[a,b]$ if and only if $f\in \cZ^2_{\dbF}([a,b];\dbR^n)$, i.e.,
\begin{equation}\label{4.3.1}
	\|f\cd W\|_{\text{BMO}_{\dbP}[a,b]}\equiv \|f(\cd)\|_{\cZ^2_\dbF[a,b]}.
\end{equation}
The process $\int_t^\cd f(t,r)dW(r)$  belongs to $\text{BMO}_{\dbP}[t,b]$ for almost all $t\in[a,b)$
if  and only if $f(\cd,\cd)\in \cZ^2_{\dbF}(\D[a,b];\dbR^n)$, i.e.,
\begin{equation*}
	\|f\cd W\|_{\text{BMO}_{\dbP}(\D[a,b])}\deq \esssup_{t\in[a,b]}\|f(t)\cd W\|_{\text{BMO}_{\dbP}[t,b]}
	=\esssup_{t\in[a,b]}\|f(t,\cd) \|_{\cZ^2_\dbF([t,b])}
	\equiv \|f(\cd,\cd)\|_{\cZ^2_\dbF(\D[a,b])}.
\end{equation*}
\end{remark}


        \begin{definition}\rm
         A pair of processes $(Y(\cd), Z(\cd,\cd))\in L^2_{\dbF}[0,T]\ts L^2_{\dbF}(\D[0,T];\dbR^d)$
         (resp., $L^\i_{\dbF}[0,T]\ts\cZ^2_{\dbF}(\D[0,T];\dbR^d)$)
         is called an \emph{adapted solution} of the general mean-field BSVIE (\ref{1.1}) in the linear growth case (resp., quadratic growth case) if it satisfies the equation in the usual It\^{o}'s sense for Lebesgue almost all $t \in [0,T]$.

               \end{definition}

  Finally, we recall the Wasserstein distance to end this section.
  For $q\geq1$, let $\mathcal{P}_{q}(\mathbb{R}^{d})$ be the set of all probability measures $\mu$ on $(\mathbb{R}^{d}, \sB(\mathbb{R}^{d}))$ with finite $q$-th moment, i.e., $\int_{\mathbb{R}^{d}}|x|^{q} \mu(d x)<\infty$. Here $\sB(\mathbb{R}^{d})$ denotes the Borel $\sigma$-field over $\mathbb{R}^{d}$.
In addition, the set $\mathcal{P}_{q}(\mathbb{R}^{d})$ is endowed with the following $q$-Wasserstein metric: for $\mu, \nu \in \mathcal{P}_{q}(\mathbb{R}^{d})$,
\begin{align*}
  \cW_{q}(\mu, \nu)\deq \inf \bigg\{  \(\int_{\mathbb{R}^{d} \times \mathbb{R}^{d}}|x-y|^{q}  \rho(d x d y)\)^{\frac{1}{q}}\Bigm|
    \rho \in \mathcal{P}_{q}(\mathbb{R}^{2 d}),\  \rho(\cd \times \mathbb{R}^{d})=\mu(\cd),\ \rho(\mathbb{R}^{d} \times \cd)=\nu(\cd)\bigg\}.
\end{align*}
Note that for $\xi,\eta\in\cP_q(\dbR^d),$  $\cW_q(\dbP_\xi, \dbP_\eta)\leq \|\xi-\eta\|_{L^q(\O)}\deq  \{\dbE|\xi-\eta|^q\}^\frac{1}{q}.$




\section{Solvability of Global Solutions}\label{sec 3}

In this section, we would like to prove the global solvability of BSVIE  (\ref{1.1}).
Before presenting the main results, we first introduce the $t$-parameterization approach for mean-field BSVIEs.

\begin{proposition}\label{pro3.1}\rm
For any given $(y(\cd), z(\cd,\cd))$ and for almost all $t\in[0,T]$,
if the following mean-field BSDE
\begin{equation}\label{3.211}
\cY(t,s)=\psi(t)+\int_s^Tg(t,r,y(r),\cZ(t,r),\dbP_{y(r)}, \dbP_{\cZ(t,r)})dr-\int_s^T\cZ(t,r)dW(r),\q s\in[t,T]
\end{equation}
admits a unique solution, denoted by $(\cY(t,\cd),\cZ(t,\cd))$,
then the following mean-field BSVIE
\begin{equation}\label{3.1110}
Y(t)=\psi(t)+\int_t^Tg(t,s,y(s),Z(t,s),\dbP_{y(s)}, \dbP_{Z(t,s)})ds-\int_t^TZ(t,s)dW(s), \q  t\in[0,T]
\end{equation}
also admits a unique solution $(Y(\cd),Z(\cd,\cd))$.
Moreover,
\begin{align*}
  Y(t) = \mathcal{Y}(t,t)\q  \text{and}\q Z(t,s) = \mathcal{Z}(t,s)\q  \text{for}\ (t,s) \in \Delta[0,T].
\end{align*}
\end{proposition}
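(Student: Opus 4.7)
The plan is to construct a solution of the BSVIE \eqref{3.1110} directly from the parameterized family of BSDE solutions, and to obtain uniqueness via the uniqueness of the BSDEs. The key observation is that evaluating \eqref{3.211} at $s=t$ formally yields \eqref{3.1110}, so the candidate solution is simply
\begin{equation*}
Y(t):=\cY(t,t), \qquad Z(t,s):=\cZ(t,s), \qquad (t,s)\in\D[0,T].
\end{equation*}

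For existence, I would first verify that the family $\{(\cY(t,\cd),\cZ(t,\cd))\}_{t\in[0,T]}$ can be chosen so that $(t,s,\o)\mapsto \cY(t,s,\o)$ and $(t,s,\o)\mapsto \cZ(t,s,\o)$ are jointly measurable with $s\mapsto(\cY(t,s),\cZ(t,s))$ being $\dbF$-progressively measurable for each $t$. This is the main technical step and I would handle it by constructing the BSDE solutions via a Picard iteration that preserves joint measurability in $t$: starting from $(\cY^{(0)},\cZ^{(0)})\equiv(0,0)$, define $\cZ^{(k+1)}(t,\cd)$ via the martingale representation of the random variable
\begin{equation*}
\psi(t)+\int_t^T g\big(t,r,y(r),\cZ^{(k)}(t,r),\dbP_{y(r)},\dbP_{\cZ^{(k)}(t,r)}\big)\,dr,
\end{equation*}
and set $\cY^{(k+1)}(t,s)$ by the BSDE formula. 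A standard Fubini-type argument, together with the measurable version of martingale representation, propagates joint measurability at each step; passing to the limit in the Picard contraction delivers a jointly measurable pair. Given this, applying the BSDE \eqref{3.211} at $s=t$ immediately shows that $(Y,Z)$ satisfies \eqref{3.1110} pointwise in $t$; the required norms in $L^2_{\dbF}[0,T]\ts L^2_{\dbF}(\D[0,T];\dbR^d)$ (or the quadratic-growth counterpart) follow from the $t$-uniform BSDE estimates and Fubini.

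For uniqueness, suppose $(Y',Z')$ is another adapted solution of \eqref{3.1110}. For each fixed $t\in[0,T]$, define
\begin{equation*}
\wt{\cY}(t,s):=Y'(t)+\int_t^s Z'(t,r)\,dW(r)-\int_t^s g\big(t,r,y(r),Z'(t,r),\dbP_{y(r)},\dbP_{Z'(t,r)}\big)\,dr,\q s\in[t,T].
\end{equation*}
Subtracting this from \eqref{3.1110} gives
\begin{equation*}
\wt{\cY}(t,s)=\psi(t)+\int_s^T g\big(t,r,y(r),Z'(t,r),\dbP_{y(r)},\dbP_{Z'(t,r)}\big)\,dr-\int_s^T Z'(t,r)\,dW(r),
\end{equation*}
so that $(\wt{\cY}(t,\cd),Z'(t,\cd))$ solves the BSDE \eqref{3.211}. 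By the assumed uniqueness of \eqref{3.211}, one gets $\wt{\cY}(t,\cd)=\cY(t,\cd)$ and $Z'(t,\cd)=\cZ(t,\cd)$ for a.e.\ $t$, and in particular $Y'(t)=\wt{\cY}(t,t)=\cY(t,t)=Y(t)$.

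The anticipated obstacle is not the algebra of matching the BSDE to the BSVIE, which is essentially tautological after the identification $Y(t)=\cY(t,t)$, $Z(t,s)=\cZ(t,s)$, but the measurable selection of the $t$-parameterized BSDE solutions so that $\cZ(\cd,\cd)$ belongs to the relevant Volterra space. I would address this cleanly by running the Picard iteration with the $t$-parameter carried through, which sidesteps any abstract selection theorem; under the Lipschitz/BMO frameworks used later (Theorems \ref{th3.1}, \ref{th3.4}, \ref{th 4.3}), the contraction is uniform in $t$ and joint measurability is automatic.
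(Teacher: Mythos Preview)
Your proposal is correct and follows essentially the same $t$-parameterization approach as the paper: construct $(Y(t),Z(t,s)):=(\cY(t,t),\cZ(t,s))$ for existence, and for uniqueness embed any other BSVIE solution into a solution of the mean-field BSDE \eqref{3.211} and invoke its uniqueness. Your uniqueness argument is in fact slightly more direct than the paper's (which first passes through an auxiliary BSDE with the generator frozen at the second BSVIE solution's $Z$-component, identifies that solution's martingale integrand with the BSVIE $Z$, and only then reduces to \eqref{3.211}), and your discussion of joint measurability via a $t$-parameterized Picard iteration addresses a technical point the paper leaves implicit.
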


\begin{proof}\label{3.1109}
	For the existence, for any fixed $t\in[0,T]$, we
set
$$Y(t)\deq \cY(t,t)\q\hbox{and}\q  Z(t,s)\deq \cZ(t,s),\q (t,s)\in\D[0,T].$$
 Obviously,  $(Y(\cd),Z(\cd,\cd))$ is a solution of BSVIE  (\ref{3.1110}).

Regarding uniqueness, suppose $(\overline{Y}(\cdot), \overline{Z}(\cdot,\cdot))$ is another adapted solution of BSVIE (\ref{3.1110}). By $(\overline{\cY}(t,\cd), \overline{\cZ}(t,\cd))$ we denote  the unique adapted solution of the following mean-field  BSDE (parameterized by $t\in[0,T]$):
	\begin{equation}\label{3.1111}
		\overline{ \mathcal{Y}}(t,s)=\psi(t)+\int_s^Tg(t,r,y(r),\overline{Z}(t,r),\dbP_{y(r)}, \dbP_{\overline{Z}(t,r)})dr-\int_s^T\overline{\cZ}(t,r)dW(r), \q  s\in[t,T].
	\end{equation}
Setting $s = t$ and taking the conditional expectation $\mathbb{E}_t[\cdot](=\dbE[\cd|\sF_t])$ on both sides of BSDE (\ref{3.1111}), and then comparing the resulting expression with the BSVIE satisfied by $(\overline{Y}(\cdot), \overline{Z}(\cdot,\cdot))$, i.e.,
\begin{equation*}
\overline{Y}(t)=\psi(t)+\int_t^Tg(t,s,y(s),\overline{Z}(t,s),\dbP_{y(s)}, \dbP_{\overline{Z}(t,s)})ds-\int_t^T\overline{Z}(t,s)dW(s), \q t\in[0,T],
\end{equation*}
we find that for almost all $t\in[0,T]$,
	\begin{equation} \label{3.1112}
		\overline{Y}(t)=	\overline{\cY}(t,t)\q
		 \hb{and}\q
		\overline{Z}(t,s)=\overline{\cZ}(t,s),\q  s\in[t,T].
	\end{equation}
Substituting (\ref{3.1112}) into (\ref{3.1111}) yields that
	\begin{equation*}
		\overline{ \mathcal{Y}}(t,s)=\psi(t)+\int_s^Tg(t,r,y(r),\overline{\cZ}(t,r),\dbP_{y(r)}, \dbP_{\overline{\cZ}(t,r)})dr-\int_s^T\overline{\cZ}(t,r)dW(r), \q s\in[t,T].
	\end{equation*}
Now,  comparing    the above equation with (\ref{3.211}),  it follows that
	\begin{equation*}
		\cY(t,\cd)=\overline{\cY}(t,\cd)\q\hb{and} \q \cZ(t,\cd)=\overline{\cZ}(t,\cd),\q  t\in[0,T].
	\end{equation*}
	Therefore,  one has
	\begin{equation*}
		Y(t)=\cY(t,t)=\overline{\cY}(t,t)=\overline{Y}(t)\q\hb{and}\q  Z(t,s)=\cZ(t,s)=\overline{\cZ}(t,s)=\overline{Z}(t,s),\q  (t,s)\in\D[0,T].
	\end{equation*}
	This completes the proof.
\end{proof}

\subsection{Linear growth case}\label{subsec3.1}

In this subsection, we consider the case where  the generator $g$ is of linear growth with respect to $z$.

\begin{assumption}\label{ass3.1}\rm
	Suppose that $g:  \O\ts \D[0,T]\ts \dbR\ts\dbR^d\ts \cP_2(\dbR)\ts\cP_2(\dbR^d)\ra \dbR $ is
	$\sF_T\otimes\sB(\D[0,T]\ts \dbR\ts \dbR^{d}\ts \cP_2(\dbR)\ts  \cP_2(\dbR^{d}))$
	measurable  such that $s\mapsto g(t,s,y,z, \mu,\nu)$ is $\dbF$-progressively measurable for
	all $(t,y,z,\mu,\nu)\in  [0,T]   \ts \dbR\ts  \dbR^{d} \ts \cP_2(\dbR) \ts
	\cP_2(\dbR^{d}) $ and assume that $\psi: \O\ts[0,T]\ra \dbR$ is an $\sF_T$-measurable stochastic process. Moreover, they satisfy the following:
	\begin{enumerate}[~~\,\rm (i)]
		\item There exists a positive constant $L$ such that for any $(t,s)\in\D [0,T]$  and  $y,\bar{y}\in \dbR$, $z,\bar{z}\in \dbR^d$, $\mu,\bar{\mu}\in\cP_2(\dbR)$,
		$\nu,\overline{\nu}\in\cP_2(\dbR^d)$, $\dbP$-a.s.,
		$$
		\begin{aligned}
			&|g(t,s,y,z,\mu,\nu)-g(t,s,\bar{y}, \bar{z},\bar{\mu},\bar{\nu})|
			 \leq L (|y-\bar{y}|+|z-\bar{z}|+\cW_2(\mu,\bar{\mu})+ \cW_2(\nu,\overline{\nu})\big).
		\end{aligned}$$
       \item The free term $\psi(\cdot) \in L^2_{\mathcal{F}_T}([0,T])$ and the mapping $(t,s) \mapsto g(t,s,0,0,\delta_0,\delta_0)$ belongs to $L^2_{\mathbb{F}}(\Delta [0,T])$.
		
	\end{enumerate}
\end{assumption}

\begin{theorem}\label{th3.1}\rm
Under \autoref{ass3.1},  BSVIE (\ref{1.1}) admits a unique solution
$(Y(\cd), Z(\cd,\cd))\in L^2_{\dbF}[0,T]\ts L^2_{\dbF}(\D [0,T];\dbR^d)$.
\end{theorem}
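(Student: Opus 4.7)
The plan is to invoke Proposition 3.1 to reduce BSVIE (1.1) to a fixed-point problem in the single variable $y$. For each input $y\in L^2_{\dbF}[0,T]$, I would first solve the $t$-parameterized mean-field BSDE (3.2) on $[t,T]$ to obtain a solution $(\cY^y(t,\cd),\cZ^y(t,\cd))$ for every $t\in[0,T]$, and then define the solution map $\Phi\colon L^2_{\dbF}[0,T]\to L^2_{\dbF}[0,T]$ by $\Phi(y)(t)\deq\cY^y(t,t)$. By Proposition 3.1, a fixed point $y^*$ of $\Phi$ produces $(Y,Z)\deq(y^*,\cZ^{y^*})$ solving (1.1), and uniqueness of the fixed point translates, again via Proposition 3.1, into uniqueness of the BSVIE solution.

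For the parameterized solvability, fix $t\in[0,T]$ and observe that (3.2) is a one-dimensional mean-field BSDE whose generator is Lipschitz in $(z,\nu)$ uniformly in the remaining arguments by Assumption 3.1(i), with terminal $\psi(t)\in L^2_{\sF_T}(\O)$ and driver $(r,\o)\mapsto g(t,r,y(r),0,\dbP_{y(r)},\d_0)$ in $L^2_{\dbF}[t,T]$ thanks to Assumption 3.1(ii). Hence classical mean-field BSDE theory yields a unique adapted solution. To guarantee joint $(t,s,\o)$-measurability of $\cZ^y(\cd,\cd)$ and its membership in $L^2_{\dbF}(\D[0,T];\dbR^d)$, I would realize the Picard iteration simultaneously in $t$, so that each iterate is $\sB(\D[0,T])\otimes\sF_T$-measurable, and then pass to the limit in a norm uniform in~$t$.

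For the contraction step, take $y_1,y_2$ and set $\d\cY=\cY^{y_1}-\cY^{y_2}$, $\d\cZ=\cZ^{y_1}-\cZ^{y_2}$, $\d y=y_1-y_2$. A standard It\^{o}-Gronwall computation on the difference BSDE, combined with $\cW_2(\dbP_{y_1(r)},\dbP_{y_2(r)})\le\{\dbE|\d y(r)|^2\}^{1/2}$ and absorption of the $\d\cZ$-law term by the Lipschitz constant, gives
\begin{equation*}
  \dbE|\d\cY(t,t)|^2+\dbE\int_t^T|\d\cZ(t,r)|^2dr\le C\int_t^T\dbE|\d y(r)|^2dr.
\end{equation*}
With the equivalent weighted norm $\|y\|_\b^2\deq\int_0^Te^{\b t}\dbE|y(t)|^2dt$, Fubini's theorem yields
\begin{equation*}
  \|\Phi(y_1)-\Phi(y_2)\|_\b^2\le \frac{C}{\b}\|y_1-y_2\|_\b^2,
\end{equation*}
which is a strict contraction for $\b$ sufficiently large, producing a unique fixed point $y^*\in L^2_{\dbF}[0,T]$.

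Finally, to verify $(Y,Z)\in L^2_{\dbF}[0,T]\ts L^2_{\dbF}(\D[0,T];\dbR^d)$: the $Y$-component is immediate, and the $Z$-component follows from the BSDE a priori estimate applied to (3.2) with input $y^*$, namely
\begin{equation*}
  \dbE\int_t^T|\cZ^{y^*}(t,s)|^2ds\le C\,\dbE|\psi(t)|^2+C\,\dbE\int_t^T\Big(|y^*(r)|^2+|g(t,r,0,0,\d_0,\d_0)|^2\Big)dr,
\end{equation*}
and integrating over $t\in[0,T]$ using Assumption 3.1(ii). The main obstacles I anticipate are (i) securing joint measurability of $\cZ^y(t,s)$ across the parameter~$t$, which requires a simultaneous-in-$t$ Picard scheme rather than solving each BSDE in isolation, and (ii) selecting an equivalent norm that promotes $\Phi$ to a global contraction on $[0,T]$, for which the exponential weight $e^{\b t}$ absorbs the time factor that would otherwise preclude a contraction beyond short intervals.
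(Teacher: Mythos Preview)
Your proposal is correct and follows essentially the same strategy as the paper: invoke Proposition~3.1 to reduce to a $t$-parameterized mean-field BSDE, then run a contraction argument with an exponential weight $e^{\b t}$. The only cosmetic differences are that the paper sets up its map $\Lambda$ on the product space $L^2_{\dbF}[0,T]\times L^2_{\dbF}(\D[0,T];\dbR^d)$ (though the dependence on the second component is vacuous, since Eq.~(3.2) does not involve the input $z$), whereas you work directly on $L^2_{\dbF}[0,T]$; and the paper applies It\^o's formula to $e^{\b s}|\D\cY(t,s)|^2$ so the weight appears before setting $s=t$, while you first derive the unweighted estimate via It\^o--Gronwall and then weight at the $t$-integration step.
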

\begin{proof}
For any $(y(\cdot), z(\cdot,\cdot)) \in L^2_{\mathbb{F}}[0,T] \times L^2_{\mathbb{F}}(\Delta[0,T];\mathbb{R}^d)$, by \cite[Theorem A.1]{Li-2018}, the mean-field BSDE (\ref{3.211}) admits a unique solution $(\mathcal{Y}(t,\cdot),\mathcal{Z}(t,\cdot)) \in L^2_{\mathbb{F}}(\Omega;C([t,T]; \mathbb{R})) \times L^2_{\mathbb{F}}([t,T];\mathbb{R}^d)$. Hence, by \autoref{pro3.1}, Eq. (\ref{3.1110}) admits a unique solution $(Y(\cdot), Z(\cdot,\cdot)) \in L^2_{\mathbb{F}}[0,T] \times L^2_{\mathbb{F}}(\Delta[0,T];\mathbb{R}^d)$. This allows us to define a mapping
\begin{equation*}
\L(y(\cd),z(\cd,\cd))\deq(Y(\cd),Z(\cd,\cd)).
\end{equation*}

Next, we prove that $\L$ is a contraction mapping. To this end, let $(\h{y}(\cd),\h{z}(\cd,\cd))\in L^2_{\dbF}[0,T]\ts L^2_{\dbF}(\D[0,T];\dbR^d)$ and define  $(\h{Y}(\cd),\h{Z}(\cd,\cd))\deq\L(\h{y}(\cd),\h{z}(\cd,\cd)).$
By $(\h{\cY}(t,\cd),\h{\cZ}(t,\cd))$ we denote the unique solution to
Eq. (\ref{3.211}) with $(\h{y}(\cd),\h{z}(\cd,\cd))$.
Thanks to \autoref{pro3.1}, it follows that
 $$\h{Y}(t)=\h{\cY}(t,t)\q\hb{and}\q \h{Z}(t,s)=\h{\cZ}(t,s),\q  (t,s)\in \D[0,T].$$
Denote $\Delta h = h - \widehat{h}$ for $h = y, z, \mathcal{Y}, \mathcal{Z}, Y, Z$, and let $\beta = 16L^2T + 8L^2 + 1$.
Applying It\^{o}'s  formula to $e^{\beta s}|\Delta \mathcal{Y}(t,s)|^2$ and using the Lipschitz condition for $g$ yields
\begin{align*}
&\ e^{\b s}|\D\cY(t,s)|^2+\int_s^Te^{\b r}(\b |\D \cY(t,r)|^2+|\D \cZ(t,r)|^2)dr\\
&\leq \int_s^T  e^{\b r}2L|\D \cY(t,r)|(|\D y(r)|+\{\dbE |\D y(r)|^2\}^\frac{1}{2}
 +|\D \cZ(t, r)|+ \{\dbE|\D \cZ(t, r)|^2\}^\frac{1}{2} )dr,
\end{align*}
which implies
\begin{align*}
&e^{\b s}|\D\cY(t,s)|^2+\frac{1}{2}\int_s^Te^{\b r} |\D \cZ(t,r)|^2dr
 \leq \frac{1}{8T}\int_s^Te^{\b r}(|\D y(r)|^2+\dbE |\D y(r)|^2)dr.
\end{align*}
Setting $s = t$ and recalling that $\Delta Y(t) = \Delta\mathcal{Y}(t,t)$ and $\Delta Z(t,s) = \Delta\mathcal{Z}(t,s)$ for $(t,s) \in \Delta[0,T]$, we obtain
\begin{align*}
&\ \dbE\int_0^Te^{\b t}|\D Y(t)|^2dt+\dbE\int_0^T\int_t^Te^{\b r} |\D Z(t,r)|^2drdt\\
&\leq \frac{1}{2T} \dbE\int_0^T\int_t^Te^{\b r} |\D y(r)|^2 drdt \leq\frac{1}{2}\dbE\int_0^Te^{\b t}|\D y(t)|^2dt\\
&\leq \frac{1}{2}\(\dbE\int_0^Te^{\b t}|\D y(t)|^2dt+\dbE\int_0^T\int_t^Te^{\b r} |\D z(t,r)|^2drdt\).
\end{align*}
Hence, $\L$ is a contraction mapping under the following norm
\begin{align*}
\| y(\cd),z(\cd,\cd) \|_{\cM^{2,\b}_{\dbF}}\deq\dbE\int_0^Te^{\b t}|y(t)|^2dt+\dbE\int_0^T\int_t^Te^{\b r} |z(t,r)|^2drdt.
\end{align*}
Since the norm $\|\cd \|_{\cM^{2,\b}_{\dbF}}$ is equivalent to the norm $\|\cd \|_{\cM^{2,0}_{\dbF}}$,
BSVIE \rf{1.1} admits a unique solution in
 $L^2_{\dbF}[0,T]\ts L^2_{\dbF}(\D [0,T];\dbR^d).$
\end{proof}

\subsection{Quadratic growth case}\label{subsec3.2}

In this subsection, we consider the case where  the generator $g$ is of quadratic growth with respect to $z$. Furthermore, depending on whether the generator $g$ is bounded in the distribution w.r.t. $z$ or not, we will discuss it in two separate parts.

In what follows, we always let $\a\in[0,1)$,  let $\b,\b_0,K_1,K_2,K_3,\g,\g_0,\widetilde{\g}$  be  given positive constants.  Moreover,
we assume that
$\ell:\D [0,T]\ts \O\ra \dbR$ is   a non-negative valued process,
$\phi:[0,\i)\ra [0,\i)$   is a continuous and monotonically increasing function,
and $\psi:[0,T]\ts\O\ra \dbR$ is an $\sF_T$-measurable stochastic process.

 \subsubsection{$g$ is bounded in $\dbP_{Z(t,s)}$}
\begin{assumption}\label{ass4.1-1}\rm
	Suppose that $g:\O\ts [0,T]\ts \dbR \ts \dbR^d \ts \cP_2(\dbR) \ts \cP_2(\dbR^d)\ra \dbR$ is
	$\sF_T\otimes\sB(\D[0,T]\ts \dbR\ts \dbR^{d}\ts \cP_2(\dbR)\ts  \cP_2(\dbR^{d}))$
	measurable  such that $s\mapsto g(t,s,y,z, \mu,\nu)$ is $\dbF$-progressively measurable for
	all $(t,y,z,\mu,\nu)\in  [0,T]   \ts \dbR\ts  \dbR^{ d} \ts \cP_2(\dbR) \ts
	\cP_2(\dbR^{d}) $, and the following conditions hold:
	\begin{enumerate}[~~\,\rm (i)]
		\item   For $(t,s)\in\D[0,T]$,  $y\in \dbR$, $z\in \dbR^d$, $\mu\in\cP_2(\dbR), \nu\in\cP_2(\dbR^d)$,
		$\dbP$-a.s.,
		\begin{equation*}
			\begin{aligned}
				& |g(t,s, y,z,\mu,\nu)|\leq \frac{\gamma}{2}|z|^2+\ell(t,s)+\b|y|
				+\b_0\cW_2(\mu,\d_{0}).
			\end{aligned}
		\end{equation*}
		\item  For $(t,s)\in\D [0,T]$  and for all $y,\bar{y}\in \dbR$, $z,\bar{z}\in \dbR^d$, $\mu,\bar{\mu}\in\cP_2(\dbR)$,
		$\nu,\overline{\nu}\in\cP_2(\dbR^d)$, $\dbP$-a.s.,
		$$
		\begin{aligned}
			&|g(t,s,y,z,\mu,\nu)-g(t,s,\bar{y}, \bar{z},\bar{\mu},\bar{\nu})|\\
			&\leq\phi\big(|y|\vee|\bar{y}|\vee \cW_2(\mu,\d_{0})\vee \cW_2(\bar{\mu},\d_{0})\big)\cd
			\big[(1+|z|+|\bar{z}|)|z-\bar{z}| )+\cW_2(\nu,\bar{\nu})]\\
			&\qq
			+\b |y-\bar{y}|+\b_0\cW_2(\mu,\bar{\mu}).
		\end{aligned}$$
		\item   The free term $\psi(\cd)$ is bounded with $\|\psi(\cd)\|_{L^\i_{\sF_T}[0,T]}\leq K_1$, and
		the process  $|\ell(\cd,\cd)|^2$ belongs to the space  $L^\i([0,T];  L^{1,\i}_{\dbF}([\cd,T];\dbR^+))$ with $ \||\ell(\cd,\cd)|^2\|_{L^{\i}_{\dbF}(\D[0,T])} \leq K_2$.
\end{enumerate}
\end{assumption}

\begin{remark}\rm
A typical example that satisfies \autoref{ass4.1-1} is
\begin{align*}
g(t,s,y,z,\mu,\nu)=(s-t)^{-\frac{1}{3}}+2|y| +|z_1|^2+\cW_2(\mu,\d_0)+\arctan \cW_2(\nu,\d_0),
\end{align*}
for $(t,s)\in\D [0,T], y\in\dbR,z=(z_1,\cds, z_d)\in\dbR^d,\mu\in\cP_2(\dbR), \nu\in \cP_2(\dbR^d).$
%
\end{remark}

For positive values $R_1$ and $R_2$, and a small positive constant $\e$,  we define
\begin{equation}\label{3.2}
	\begin{aligned}
		\sB_\e(R_1,R_2)\deq  \Big\{ &
		(P(\cd), Q(\cd,\cd))\in L_\dbF^\infty[T-\e,T]\ts \cZ^2_\dbF(\D[T-\e,T]; \dbR^d) \Big|\\
		& \| P(\cd)\|_{L_\dbF^\infty[T-\e,T]}\les R_1\q
		\text{and}\q \|Q(\cd,\cd)\|^2_{\cZ^2_\dbF( \D[T-\e,T]) }\les R_2\Big\},
	\end{aligned}
\end{equation}
endowed with the norm
\begin{equation*}
	\|(P(\cd), Q(\cd,\cd))\|_{\sB_\e}\deq \sqrt{\|P(\cd)\|^2_{L_{\dbF}^\i[T-\e,T]}
		+ \|Q(\cd,\cd)\|^2_{\cZ^2_{\dbF}(\D[T-\e,T])}}.
\end{equation*}

\begin{proposition}\label{pro3.4}\rm
Under \autoref{ass4.1-1},  there exists a positive constant $\e$ depending on $\b,\b_0,\g,K_1,K_2$ such that
       BSVIE (\ref{1.1}) admits a unique local solution $(Y(\cd),Z(\cd,\cd))\in \sB_\e(\bar{R}_1,\bar{R}_2)$
       with
       $$\bar{R}_1=2(K_1+\sqrt{K_2})\q  \text{and}\q  \bar{R}_2=\frac{2}{\g^2}e^{2\g K_1}+\frac{4\sqrt{K_2}}{\g} e^{2\g\bar{R}_1}.$$
\end{proposition}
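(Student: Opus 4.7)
The plan is to combine the $t$-parameterization of \autoref{pro3.1} with a contraction on the closed ball $\sB_\e(\bar R_1,\bar R_2)$. Given any $(y,z)\in\sB_\e(\bar R_1,\bar R_2)$, for each fixed $t\in[T-\e,T]$ the law-frozen BSDE \eqref{3.211} is a classical one-dimensional quadratic BSDE in $(\cY(t,\cd),\cZ(t,\cd))$: the terminal $\psi(t)$ is bounded by $K_1$, the driver is quadratic in $\cZ$, Lipschitz in $\cY$ through $\b|y|$, and its law-inputs are frozen, with $\cW_2(\dbP_{y(r)},\d_0)\leq\bar R_1$ while $\cW_2(\dbP_{z(t,r)},\d_0)$ does not appear in the growth bound (\autoref{ass4.1-1}(i)). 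Kobylanski--Briand--Hu theory then yields a unique bounded adapted solution with $\cZ(t,\cd)\cd W\in\hb{BMO}_\dbP[t,T]$. Defining $Y(t)\deq\cY(t,t)$ and $Z(t,s)\deq\cZ(t,s)$ produces a map $\L:\sB_\e(\bar R_1,\bar R_2)\to L^\i_\dbF[T-\e,T]\ts\cZ^2_\dbF(\D[T-\e,T];\dbR^d)$, whose fixed points are precisely the adapted solutions of BSVIE \eqref{1.1} on $[T-\e,T]$ by \autoref{pro3.1}.

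Showing that $\L$ preserves $\sB_\e(\bar R_1,\bar R_2)$ for $\e$ small is the core step. For the $L^\i$-bound, I would apply It\^o to $e^{\g\cY(t,s)}$ (equivalently, use Hopf--Cole comparison against the BSDE with deterministic driver $\frac{\g}{2}|z|^2+\ell+(\b+\b_0)\bar R_1$); the two-sided bound in \autoref{ass4.1-1}(i), Cauchy--Schwarz $\int_t^T\ell(t,r)dr\leq\sqrt{\e K_2}$, and $|y|,\cW_2(\dbP_y,\d_0)\leq\bar R_1$ give
$$
\|\cY(t,\cd)\|_{L^\i[t,T]}\leq K_1+\sqrt{\e K_2}+(\b+\b_0)\bar R_1\,\e\leq\bar R_1
$$
provided $\e\leq 1\wedge\frac{1}{2(\b+\b_0)}$. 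Next, applying It\^o to $v(\cY)=e^{2\g\cY}$, integrating from a stopping time $\t\in\sT[t,T]$ up to $T$, and taking $\dbE_\t[\cd]$, the two-sided bound on $g$ produces
$$
\g^2\,\dbE_\t\!\!\int_\t^T v(\cY(t,r))|\cZ(t,r)|^2dr\leq\dbE_\t\!\[e^{2\g\psi(t)}\]+2\g e^{2\g\bar R_1}\dbE_\t\!\!\int_\t^T\!\!(\ell(t,r)+(\b+\b_0)\bar R_1)dr.
$$
Combining $v\geq e^{-2\g\bar R_1}$, $|\psi|\leq K_1$, and $\dbE_\t\!\int_\t^T\ell(t,r)dr\leq\sqrt{\e K_2}$ yields $\|\cZ(t,\cd)\|_{\cZ^2_\dbF[t,T]}^2\leq\bar R_2$ for $\e$ small, and this is exactly where the constants $\frac{2}{\g^2}e^{2\g K_1}$ and $\frac{4\sqrt{K_2}}{\g}e^{2\g\bar R_1}$ come from.

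For the contraction, let $(\cY^i,\cZ^i)$ correspond to $(y^i,z^i)\in\sB_\e$, $i=1,2$, and write $\D\cY=\cY^1-\cY^2$, $\D\cZ=\cZ^1-\cZ^2$. It\^o applied to $|\D\cY(t,s)|^2$, together with the local Lipschitz inequality \autoref{ass4.1-1}(ii), produces the cross term $\phi(\bar R_1)(1+|\cZ^1|+|\cZ^2|)|\D\cZ|$; the technical crux is to absorb its quadratic-in-$\cZ$ contribution. Since $\|\cZ^i(t,\cd)\cd W\|_{\hb{BMO}_\dbP[t,T]}^2\leq\bar R_2$, the process $\phi(\bar R_1)(1+\cZ^1+\cZ^2)$ induces a BMO martingale, so I would perform Girsanov via
$$
d\widetilde{\dbP}^t=\cE\!\(\phi(\bar R_1)\!\int_t^\cd(1+\cZ^1+\cZ^2)dW\)_t^T d\dbP,
$$
use \eqref{2.5} and the reverse H\"older \eqref{2.2} to control the density in $L^p$, and conditional-expect under $\widetilde{\dbP}^t$ to eliminate the cross term. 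The remaining linear-in-$\D y,\D z$ residues are absorbed by smallness of $\e$, producing
$$
\|\D Y\|_{L^\i[T-\e,T]}^2+\|\D Z\|_{\cZ^2_\dbF(\D[T-\e,T])}^2\leq C\(K_1,K_2,\b,\b_0,\g,\phi(\bar R_1)\)\e\,\|(\D y,\D z)\|_{\sB_\e}^2.
$$
Shrinking $\e$ makes $\L$ a contraction and delivers the unique local solution in $\sB_\e(\bar R_1,\bar R_2)$.

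The principal obstacle is the self-consistency of the two a priori bounds: the $L^\i$-estimate on $\cY$ needs the quadratic $|\cZ|^2$ term controlled, while the BMO-estimate on $\cZ$ in turn relies on $\|\cY\|_{L^\i}\leq\bar R_1$ through the factor $e^{2\g\bar R_1}$; only the exponential transform $v=e^{2\g\cY}$ closes this loop, which is what forces $\bar R_2$ to depend exponentially on $\bar R_1$ exactly as stated. A secondary care point is that the BMO norms and the Girsanov change of measure are performed on the slice $[t,T]\subset[T-\e,T]$ uniformly in $t$, so that they combine with $\esssup_t$ to reconstruct the $\cZ^2_\dbF(\D[T-\e,T])$-norm used in the definition of $\sB_\e$.
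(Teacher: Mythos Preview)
Your proposal is correct and follows essentially the same approach as the paper: the $t$-parameterization of \autoref{pro3.1}, exponential-transform a priori bounds to show the map is stable on $\sB_\e(\bar R_1,\bar R_2)$, and a Girsanov linearization with BMO control for the contraction. The only minor over-elaboration is your invocation of the reverse H\"older inequality \eqref{2.2}; the paper needs only the BMO-equivalence \eqref{2.5} to transfer $\|\d Z(t,\cd)\|_{\cZ^2_{\breve\dbP^t}}$ back to $\|\d Z(t,\cd)\|_{\cZ^2_\dbP}$, without ever estimating the density in $L^p$.
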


We postpone the proof to the appendix to avoid disrupting the overall readability.

\begin{theorem}\label{th3.4}\rm
Let \autoref{ass4.1-1} hold, BSVIE (\ref{1.1})  admits a unique global adapted solution
	$(Y(\cd), Z(\cd,\cd))\in L^\i_{\dbF}[0,T]\ts \cZ^2_{\dbF}(\D[0,T];\dbR^d).$  Furthermore there exist two positive constants
	$\overline{M}_1$ and $\overline{M}_2$  depending on  $K_1,K_2, T, \g, \b,\b_0$ such that
	\begin{equation*}
		\|Y(\cd)\|^2_{L_{\dbF}^\i[0,T]}\leq \overline{M}_1\q\hb{and}\q\ \|Z(\cd,\cd)\|^2_{ \overline{\cZ}^2_{\dbF}(\D[0,T])}\leq \overline{M}_2.
	\end{equation*}
\end{theorem}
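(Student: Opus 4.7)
The plan is to build a global solution on $[0,T]$ from the local one furnished by \autoref{pro3.4}, combined with a priori bounds that are independent of the length of the interval on which the equation is posed. The strategy has four stages: (a) a uniform $L^\i$ bound on $Y$ via an exponential transformation, (b) a BMO bound on $Z$ that follows from the bound on $Y$, (c) iterative backward extension using that the a priori bounds are universal, and (d) uniqueness on each sub-interval.

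For stage (a), I would exploit the $t$-parameterization of \autoref{pro3.1}. For each fixed $t\in[0,T]$, the pair $(\cY(t,\cd),\cZ(t,\cd))$ solves the quadratic BSDE \rf{3.211}. Applying It\^o's formula to $\exp(\g|\cY(t,s)|)$ and invoking \autoref{ass4.1-1}(i), the It\^o correction $\frac{\g^2}{2}e^{\g|\cY|}|\cZ|^2$ absorbs the $\frac{\g}{2}|\cZ|^2$ factor of the drift. Conditioning on $\sF_s$, using $|\psi|\les K_1$, $\||\ell|^2\|_{L^{\i}_\dbF(\D[0,T])}\les K_2$ and $\cW_2(\dbP_{Y(r)},\d_0)\les \|Y\|_{L^\i_\dbF[0,T]}$, I obtain a Gr\"onwall-type integral inequality for $e^{\g|\cY(t,s)|}$; its solution at $s=t$ gives $|Y(t)|\les \sqrt{\overline{M}_1}$, where $\overline{M}_1$ depends only on $K_1,K_2,T,\g,\b,\b_0$. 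The implicit self-dependence on $\|Y\|_{L^\i_\dbF[0,T]}$ closes because the offending term enters only as $(\b+\b_0)T\,\|Y\|_{L^\i_\dbF}$ inside the exponential Gr\"onwall and can be absorbed on the left-hand side.

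For stage (b), with $Y$ uniformly bounded I would apply It\^o to $e^{\g\cY(t,s)}-e^{-\g\cY(t,s)}$ (or another convex function tuned to the growth) and take conditional expectation at a stopping time $\t\in\sT[t,T]$. The quadratic growth and the It\^o correction together produce a positive coefficient on $\int_\t^T|\cZ(t,r)|^2dr$, while everything else is controlled by $\overline{M}_1,K_1,K_2,\b,\b_0$. Taking the essential supremum over $\t\in\sT[t,T]$ and over $t\in[0,T]$ and invoking \rf{4.3.1} yields $\|Z(\cd,\cd)\|^2_{\cZ^2_\dbF(\D[0,T])}\les \overline{M}_2$.

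Stage (c) proceeds by backward iteration of \autoref{pro3.4}. After constructing a local solution on $[T-\e,T]$, I would fix its values there and solve, for $t\in[T-2\e,T-\e]$, a coupled fixed-point problem for $(Y(t),Z(t,\cd))$ on $[t,T]$, with the portion of the data on $[T-\e,T]$ coming from the previously determined values. The contraction argument of \autoref{pro3.4} applies again; crucially, the universal a priori bounds from stages (a)--(b) keep the effective data dominated by $\overline{M}_1$, so the admissible step size remains bounded below by some $\e_0>0$ uniformly across iterations, and after $\lceil T/\e_0\rceil$ steps the solution is built on all of $[0,T]$. Uniqueness then follows by applying the local uniqueness of \autoref{pro3.4} on each sub-interval in turn. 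The main obstacle is precisely this extension: unlike the BSDE case, the equation for $Y(t)$ on a new sub-interval still couples to $Z(t,s)$ for $s$ in the already-solved tail, and these $Z(t,s)$ are \emph{not} the $Z$-values determined at the previous iteration (the second argument $s$ agrees, but the first argument $t$ is new). So a genuinely new fixed-point step is needed at each iteration, and the key technical verification is that the a priori bounds keep its contraction constant below one with $\e_0$ independent of iteration.
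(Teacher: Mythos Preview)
Your stages (a) and (b) are broadly correct in spirit and close to the paper's treatment, though the paper first uses a Girsanov transformation to remove the $z$-dependence of $g$ and then applies It\^o to $e^{\th s}|\cY(t,s)|^2$ rather than to $e^{\g|\cY|}$; for (b) it uses the specific transform $\Xi(x)=\g^{-2}(e^{\g|x|}-\g|x|-1)$. These are implementation differences, not conceptual gaps.

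The genuine gap is in stage (c). You correctly identify the obstacle: for $t\in[T-2\e,T-\e]$ the equation involves $Z(t,s)$ with $s\in[T-\e,T]$, and these values are \emph{not} produced by the previous iteration (only $Z(t',s)$ with $t'\in[T-\e,T]$ was determined there). But you do not resolve it; ``a genuinely new fixed-point step'' is a description of the difficulty, not a method. The issue is that before you can even write down a BSVIE on $[T-2\e,T-\e]$ you need an $\sF_{T-\e}$-measurable ``effective terminal'' $\psi^{T-\e}(t)$ for each $t\in[0,T-\e]$, and this terminal is itself defined through the unknown $Z(t,\cd)$ on the rectangle $[0,T-\e]\times[T-\e,T]$ together with $\dbP_{Z(t,\cd)}$. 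The paper isolates this as a separate \emph{mean-field stochastic Fredholm integral equation} (SFIE) for the pair $(\psi^{T-\e}(\cd),Z(\cd,\cd))$ on that rectangle, solves it by parameterizing in $t$ and invoking the BSDE theory (existence), and then proves its uniqueness by a BMO/Girsanov argument exploiting the equivalence \rf{2.5} of BMO norms under equivalent measures. Only after the SFIE is solved does one obtain a bounded $\sF_{T-\e}$-measurable $\psi^{T-\e}(\cd)$, at which point the local-solution machinery can be re-applied on $[T-2\e,T-\e]$. Without this SFIE step (and in particular its uniqueness), your iterative scheme is not well-defined: you cannot run \autoref{pro3.4} on the new sub-interval because its free term has not been constructed, and you cannot glue adjacent sub-intervals because the rectangular $Z$-values are undetermined.
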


\begin{proof}
The proof is split into three steps.

\textbf{Step 1.}  Boundedness of local solutions.
Set $\widetilde{L}=e^{(\b^2+(\b_0)^2+1) T}[(K_1)^2+K_2+2] $ and by $\Th(\cd)$ we denote the unique solution to the
ordinary differential equation (ODE)
\begin{align*}
\Th(t)=\widetilde{L}+\widetilde{L}\int_t^T\Th(s)ds,\q t\in[0,T].
\end{align*}
Clearly, $\Th(\cd)$ is non-increasing and continuous in $t$. Moreover, by $\varrho$ we denote its maximum value, i.e.,
$$\varrho\deq\sup\limits_{t\in[0,T]}\Th(t)=\Th(0).$$
Note that due to $\|\psi(\cd)\|^2_{L^\i_{\sF_T}[0,T]}\leq (K_1)^2\leq \widetilde{L}=\Th(T)$ and thanks to \autoref{pro3.4},
BSVIE (\ref{1.1})  exists a unique local adapted solution $(Y(\cd),Z(\cd,\cd))$ on the interval
$[T-\k_\varrho,T]$, where $\k_\varrho$ is a sufficiently small constant depending on $\varrho$.
Under  \autoref{ass4.1-1}, according to Hao et al. \cite[Theorem 3.5]{Hao-Hu-Tang-Wen-2022}, the mean-field BSDE (\ref{3.211})  with $(Y(\cd), Z(\cd,\cd))$
 admits a unique solution $(\cY(t,\cd), \cZ(t,\cd))$ on the interval $[T-\k_\varrho,T]$.
Then, by \autoref{pro3.1}, one has
$$Y(t)=\cY(t,t)\q\hbox{and}\q Z(t,s)=\cZ(t,s), \q (t,s)\in \D[T-\k_\varrho,T].$$
On the other hand,for almost all $t\in[0,T]$, Eq. (\ref{3.211}) can be rewritten as
\begin{equation*}
\begin{aligned}
\cY(t,s)=&\psi(t)+ \int_s^Tg(t,r, Y(r),\cZ(t,r), \dbP_{Y(r)}, \dbP_{\cZ(t,r)})-g(t,r,0,\cZ(t,r), \d_0, \dbP_{\cZ(t,r)})\\
&\qq\qq+g(t,r,0,0,\d_0,\dbP_{\cZ(t,r)})dr-\int_s^T\cZ(t,r)d\widetilde{W}(r;t),\q   s\in[t,T],
\end{aligned}
\end{equation*}
where $$\widetilde{W}(r;t)\deq W(r)-\int_t^r \widetilde{L}(t,s)ds,\  r\in[t,T]$$ and
\begin{equation*}
\left\{
\begin{aligned}
& g(t,r,0, \cZ(t,r), \d_0, \dbP_{\cZ(t,r)})-g(t,r,0, 0, \d_0, \dbP_{\cZ(t,r)})= \widetilde{L}(t,r)\cZ(t,r),\\
&| \widetilde{L}(t,r)|\leq \phi(0)(1+|\cZ(t,r)|),\q  r\in[t,T].
\end{aligned}
\right.
\end{equation*}
By Girsanov theorem, $\widetilde{W}(\cd;t)$ is a Brownian motion under
the probability $d\wt{\dbP}^t\deq\cE ( \widetilde{L}(t,\cd)\cd W)_t^Td\dbP.$
Next, we set $\th=\b^2+(\b_0)^2+1$ and apply It\^{o}'s formula to $e^{\th s} |\cY(t,s)|^2$, which leads to the following result: for any $s\in [t,T]$,
\begin{equation}\label{3.20}
\begin{aligned}
&\ e^{\th s}|\cY(t,s)|^2 +\wt\dbE^t_s\[\int_s^Te^{\th r}(\th|\cY(t,r)|^2+ |\cZ(t,r)|^2)dr\]\\
&\leq\wt\dbE^t_s[e^{\th T}|\psi(t)|^2]
+\wt\dbE^t_s\int_s^Te^{\th r}2|\cY(t,r)|
(\b|Y(r)|+\b_0\cW_2(\dbP_{Y(r)},\d_{0})+\ell(t,r) )dr\\
&\leq e^{\th T}[(K_1)^2+K_2]+2e^{\th T}\int_s^T\|Y(\cd)\|^2_{L^\i_{\dbF}[r,T]}dr+ \wt\dbE^t_s \int_s^Te^{\th r}\th|\cY(t,r)|^2dr.
\end{aligned}
\end{equation}
Taking $s=t$, it follows that
 $|\cY(t,t)|^2 \leq  e^{\th T}[(K_1)^2+K_2]+2e^{\th T}\int_t^T\|Y(\cd)\|^2_{L^\i_{\dbF}[r,T]}dr,$
which implies that
 $$\|Y(\cd)\|^2_{L^\i_{\dbF}[t,T]}\leq  e^{\th T}[(K_1)^2+K_2]+2e^{\th T}\int_t^T\|Y(\cd)\|^2_{L^\i_{\dbF}[r,T]}dr.$$
Hence
 $$\|Y(\cd)\|^2_{L^\i_{\dbF}[t,T]}\leq \Th(t)\leq \Th(0)=\varrho,\q  t\in[T-\k_\varrho,T].$$
Inserting this estimate into (\ref{3.20}) provides that for any $s\in[t,T]$,
\begin{align}\label{3.8-1111}
|\cY(t,s)|^2\leq e^{\th T} [(K_1)^2+K_2+ 2\varrho T]\deq\widehat{M}_1.
\end{align}
Clearly,
\begin{align}\label{3.8-111}
\|Y(\cd)\|^2_{L^\i_{\dbF}[t,T]}\leq \overline{M}_1 \q\hbox{and}\q \| \cY(t,\cd)\|^2_{L^\i_{\dbF}[t,T]}\leq \overline{M}_1,
\end{align}
where  $\overline{M}_1\deq\max\big\{\varrho,   \widehat{M}_1\big\}.$

Next, we would like to find   the bounds for $\|Z(\cd,\cd)\|^2_{ \cZ^2_\dbF(\D[T-\k_\varrho,T])}.$ For this, we define
\begin{align}\label{3.9-111}
\Xi(x)\deq\frac{1}{\g^2}\(\exp(\g  |x|-\g |x|-1) \), \q x\in\dbR,
\end{align}
          where $\g$ is given in \autoref{ass4.1-1}. It is easy to check that the function $\Xi(\cd)$ satisfies  the following results,
          \begin{equation*}
          \Xi'(x)=\frac{1}{\gamma}\big[\exp(\gamma |x|)-1\big]\sgn(x), \quad
            \Xi''(x)=\exp(\gamma|x|), \quad  \Xi''(x)-\gamma |\Xi'(x)|=1,
              \end{equation*}
              where  the notation $\sgn(x)=-\mathbf{1}_{\{x\leq 0\}}+\mathbf{1}_{\{x> 0\}}$.
Now,      applying It\^{o}'s formula to $\Xi(\cY(t,s))$, it yields that for any stopping time $\t\in\sT[t,T]$,
\begin{align*}
   \Xi(\cY(t,\t))=&\ \Xi(\cY(t,T))
+\int_{\t}^{T}\Xi'(\cY(t,r))g(t,r, Y(r),\cZ(t,r), \dbP_{Y(r)}, \dbP_{\cZ(t,r)})dr\\   &-\int_{\t}^{T}\Xi'(\cY(t,r))\cZ(t,r)dW(r)-\frac{1}{2}\int_{\t}^{T}\Xi''(\cY(t,r))
|\cZ(t,r)|^2dr\\
   \les &\ \Xi(\psi(t))+\int_{\t}^{T}|\Xi'(\cY(t,r))|
  [\ell(t,r)+\b|Y(r)|+\b_0\|Y(r)\|_{L^2(\Omega)}\big] dr\\
&   -\int_{\t}^{T}\Xi'(\cY(t,r))\cZ(t,r)dW(r)
   +\frac{1}{2}\int_{\t}^{T}\big[\gamma|\Xi'(\cY(t,r))| -\Xi''(\cY(t,r))\big]|\cZ(t,r)|^2dr.
\end{align*}
Then, note that $\gamma|\Xi'(x)| -\Xi''(x)=-1, x\in\dbR$, we have
\begin{equation}\label{3.9-112}
\begin{aligned}
  \mathbb{E}_{\t}\int_{\t}^T|\cZ(t,r)|^2dr
   & \leq 2\Xi(K_1)+2|\Xi'(\sqrt{\overline{M}_1})|\big[\sqrt{K_2 T}+ (\b+\b_0)\sqrt{\overline{M}_1} T\big]\deq \overline{M}_2,
\end{aligned}
\end{equation}
which implies that
\begin{equation*}
\|Z(\cd,\cd)\|^2_{ \cZ^2_\dbF(\D[0,T])}=\| \cZ(\cd,\cd)\|^2_{ \cZ^2_\dbF(\D[0,T])}\leq \overline{M}_2.
\end{equation*}

\textbf{Step 2.}    Solvability of related  mean-field SFIEs.
	
	\ms
	
	Note that Step 1 implies  that the  solution $(Y(s), Z(t,s))$  of  BSVIE (\ref{1.1})
	has been determined  when $t\in[T-\k_\varrho,T]$ with   $(t,s)\in\D[T-\k_\varrho,T]$.  Next,  we want to solve BSVIE \rf{1.1} when $t\in[0,T-\k_\varrho]$.  For this, we    rewrite BSVIE (\ref{1.1})  in the interval  $[0,T-\k_\varrho]$ as following:
	\begin{equation}\label{4.37-1}
		\begin{aligned}
			Y(t)= &\  \psi^{T-\k_\varrho}(t)+\int_t^{T-\k_\varrho}g(t,s,Y(s),Z(t,s),\dbP_{Y(s)}, \dbP_{Z(t,s)})ds\\
			&\  -\int_t^{T-\k_\varrho}Z(t,s)dW(s),\q  t\in[0,T-\k_\varrho],
		\end{aligned}
	\end{equation}
	where
	\begin{equation}\label{4.38}
		\begin{aligned}
			\psi^{T-\k_\varrho}(t)= &\  \psi(t)+\int^T_{T-\k_\varrho}g(t,s,Y(s),Z(t,s), \dbP_{Y(s)}, \dbP_{Z(t,s)})ds\\
			& \ -\int_{T-\k_\varrho}^{T}Z(t,s)dW(s),\q  t\in[0,T-\k_\varrho].\\
		\end{aligned}
	\end{equation}
	Note that Eq. (\ref{4.37-1}) is
	a mean-field BSVIE
	if $\psi^{T-\k_\varrho}(\cd)\in  L^\i_{\sF_{T-\k_\varrho}}[0,T-\k_\varrho] $.
	However, it is a pity that the term  $\psi^{T-\k_\varrho}(t)$ with $t\in[0,T-\k_\varrho]$ can not  been determined yet even though  the pair $(Y(s),Z(t,s))$ is well-defined on $\D[T-\k_\varrho,T]$.
	In fact, on the right-hand side of Eq. (\ref{4.38}),
	we only know the values of $Y(s)$ and $\dbP_{Y(s)}$ with $s\in[T-\k_\varrho,T]$,   but have no more information for the values of
	$Z(t,s)$ and $\dbP_{Z(t,s)}$ with $(t,s)\in [0,T-\k_\varrho]\ts[T-\k_\varrho,T]$.
	Furthermore,  in order to solve  Eq. \rf{4.37-1}, we need to show that the free term $\psi^{T-\k_\varrho}(t)$ is
	$\sF_{T-\k_\varrho}$-measurable, not just $\sF_T$-measurable.
	Hence, it is necessary to solve $\psi^{T-\k_\varrho}(t)$ when $t\in[0,T-\k_\varrho]$. The good news is that it could be  solved  by regarding  Eq. (\ref{4.38})
	as \emph{a mean-field  stochastic Fredholm integral equation}  (mean-field SFIE, for short).
	
	\ms

	Next, we first prove the existence of mean-field SFIE (\ref{4.38}).
	According to  Hao et al. \cite[Theorem 3.5]{Hao-Hu-Tang-Wen-2022}, for almost all $t\in[0,T],$
    the  mean-field QBSDE  (\ref{3.211}) with $(Y(\cd),Z(\cd,\cd))$ on the interval $[T-\k_\varrho,T]$ admits a unique solution
	$(\cY(t,\cd),  \cZ(t,\cd))\in L^\i_{\dbF}(\O;C([T-\k_\varrho,T];\dbR))\ts \overline{\cZ}^2_{\dbF}([T-k_\varrho,T];\dbR^d).$
    Now, we  set $\psi^{T-k_\varrho}(t)= \cY(t,T-k_\varrho)$ and $Z(t,s)= \cZ(t,s)$ when $(t,s)\in[0,T-k_\varrho]
   \ts[T-k_\varrho,T]$. Consequently, $(\psi^{T-k_\varrho}(\cd), Z(\cd,\cd))  $ is a solution of mean-field SFIE (\ref{4.38}),
   and $\psi^{T-k_\varrho}(t)$ is $\sF_{T-k_\varrho}$-measurable.
	Furthermore, thanks to  (\ref{3.8-111}) and  (\ref{3.9-112}), one has that for any stopping time
	$\t\in[T-k_\varrho,T]$,
	\begin{equation}\label{3.9200}
		 \|  \cY (t,\cd)\|^2_{L^\i_{\dbF}[t,T]}\leq \overline{M}_1\q \text{and}\q   \dbE_\t\int_\t^T  | \cZ(t,u)|^2du\leq \overline{M}_2.
	\end{equation}
	 Hence, it follows from (\ref{3.8-1111}) and (\ref{3.9200}) that
	$$
     \|\psi^{T-k_\varrho}(\cd)\|^2_{L^\i_{\sF_{T-k_\varrho}}[0,T-k_\varrho]}=\esssup_{(\o,t)\in\O\ts [0,T-k_\varrho]}|\psi^{T-k_\varrho}(t)|^2
      =\esssup_{(\o,t)\in\O\ts[0,T-k_\varrho]}|\cY(t,T-k_\varrho)|^2\leq  \overline{M}_1,
    $$
   and

\begin{equation}\label{4.41}
		 \|Z(\cd,\cd)\|^2_{\cZ^2_\dbF([0,T-k_\varrho]\ts[T-k_\varrho,T])} \leq \overline{M}_2.
	\end{equation}

	\ms

	 Now, we prove the uniqueness of mean-field SFIE (\ref{4.38}).  Assume that the pair
	$(\overline{\psi}^{T-k_\varrho}(\cd), \overline{Z}(\cd,\cd))\in L^\i_{\sF_{T-k_\varrho}}[0,T-k_\varrho]\ts
	\cZ^2_\dbF([0,T-k_\varrho]\ts[T-k_\varrho,T]; \dbR^d)$ is another solution to mean-field SFIE (\ref{4.38}).
	Then, for $t\in[0,T-{k_\varrho}]$,
	\begin{equation}\label{4.43}
		\begin{aligned}
			&  \psi^{T-k_\varrho}(t)-\overline{\psi}^{T-k_\varrho}(t)+\int_{T-k_\varrho}^T (Z(t,s)-\overline{Z}(t,s))dW(s)      \\
			&  =\int_{T-k_\varrho}^T\Big\{\overline{L}(t,s)[Z(t,s)-\overline{Z}(t,s)]+\overline{J}(t,s)\Big\}ds,
		\end{aligned}
	\end{equation}
	where
	\begin{equation*}
		\begin{aligned}
			\ds  &|\overline{L}(t,s)|\leq \phi(|Y(s)|\vee \cW_2(\dbP_{Y(s)},\d_0))
			(1+|Z(t,s)|+|\overline{Z}(t,s)|),\\
			\ns\ds  &\overline{J}(t,s)=g(t,s,Y(s),\overline{Z}(t,s),\dbP_{Y(s)}, \dbP_{Z(t,s)})-g(t,s,Y(s),\overline{Z}(t,s),\dbP_{Y(s)}, \dbP_{\overline{Z}(t,s)}).
		\end{aligned}
	\end{equation*}
	The fact
	\begin{equation*}
		\begin{aligned}
			\|\overline{L}(t,\cd)\|_{\cZ^2_{\dbF}(  [T-k_\varrho,T])}
			\leq   \phi(\|Y(\cd)\|_{L_\dbF^\infty[T-k_\varrho,T]})\Big\{1+ \|Z(t,s)\|_{\cZ^2_\dbF( [T-k_\varrho,T])}
               +\|\overline{Z}(t,s)\|_{\cZ^2_\dbF( [T-k_\varrho,T])}\Big\}
		\end{aligned}
	\end{equation*}
	 implies that  $\cE(\overline{L}(t,\cd)\cd W)_{T-k_\varrho}^T$ is a uniformly integrable martingale. Thus, the   process
	 $\overline{W}(s;t)=W(s)-\int_{T-k_\varrho}^s\overline{L}(t,u)du$ is a standard Brownian motion under the probability $\overline{\dbP}^t$,
     defined by $d\overline{\dbP}^t=\cE(\overline L(t,\cd)\cd W)_{T-k_\varrho}^{T}d\dbP.$
	From this, (\ref{4.43})  can be rewritten as
	\begin{equation*}
		\begin{aligned}
			&  \psi^{T-k_\varrho}(t)-\overline{\psi}^{T-k_\varrho}(t)+\int_{T-k_\varrho}^T (Z(t,s)-\overline{Z}(t,s))d\overline{W}(s;t)=\int_{T-k_\varrho}^T \overline{J}(t,s)ds.
		\end{aligned}
	\end{equation*}
	Now,   thanks to \autoref{ass4.1-1}-$\mathrm{(ii)}$, one has, for $(t,s)\in[0,T-k_\varrho]\ts[T-k_\varrho,T]$,
	\begin{equation*}
		\begin{aligned}
			\ds    |\overline{J}(t,s)|\leq &\ \phi(|Y(s)|\vee \|Y(s)\|_{L^2(\O)})
			\|Z(t,s)-\overline{Z}(t,s)\|_{L^2(\O)}\\
			\ns\ds     \leq&\ \phi(\|Y(\cd)\|_{L_\dbF^\infty[T-k_\varrho,T]}) \|Z(t,s)-\overline{Z}(t,s)\|_{L^2(\O)}.
		\end{aligned}
	\end{equation*}
	Consequently, it yields from H\"{o}lder's inequality that, for any stopping time $\t\in\sT[T-k_\varrho,T]$,
	\begin{equation*}
		\begin{aligned}
			&\ \overline{\dbE}^{t}_{\t}\[|\psi^{T-k_\varrho}(t)-\overline{\psi}^{T-k_\varrho}(t)|^2\]
			+\overline{\dbE}^{t}_{\t}\[\int_{T-k_\varrho}^T |Z(t,s)-\overline{Z}(t,s)|^2ds\] \\
			&\leq k_\varrho\phi^2(\|Y(\cd)\|_{L_\dbF^\infty[T-k_\varrho,T]})
		 \(\int_{T-k_\varrho}^{T}\|Z(t,s)-\overline{Z}(t,s)\|^2_{L^2(\O)}ds\).
		\end{aligned}
	\end{equation*}
	%
	Again, H\"{o}lder's inequality implies
	\begin{equation*}
		\begin{aligned}
			\ds   \int_{T-k_\varrho}^{T}\|Z(t,s)-\overline{Z}(t,s)\|^2_{L^2(\O)}ds& =
		 \dbE\Big\{\dbE_{T-k_\varrho}   \int_{T-k_\varrho}^{T}|Z(t,s)-\overline{Z}(t,s)|^2ds\Big\}\\
			\ns\ds      &\leq \|Z(\cd,\cd)-\overline{Z}(\cd,\cd)\|^2_{\cZ^2_{\dbF}([0,T-k_\varrho]\ts [T-k_\varrho,T])}.
		\end{aligned}
	\end{equation*}
	Hence, we deduce that for almost all $t\in[0,T-k_\varrho]$ and for $\t\in\sT[T-k_\varrho,T],$
	\begin{equation} \label{4.42}
		\begin{aligned}
			\ds
			&\ \overline{\dbE}^{t}_{\t} |\psi^{T-k_\varrho}(t)-\overline{\psi}^{T-k_\varrho}(t)|^2 +
               \overline{\dbE}^{t}_{\t}\int_{T-k_\varrho}^T |Z(t,s)-\overline{Z}(t,s)|^2ds \\
			\ns\ds    & \leq   k_\varrho \phi^2\(\sqrt{\overline{M}_1}\)
			\|Z(\cd,\cd)-\overline{Z}(\cd,\cd)\|^2_{\cZ^2_{\dbF}([0,T-k_\varrho]\ts [T-k_\varrho,T])}.
		\end{aligned}
	\end{equation}
	 For almost all
     $t\in [0,T-k_\varrho]$, it follows from (\ref{2.5}) and (\ref{4.42}) that
	\begin{equation*}
		\begin{aligned}
			\ds       &\  \|(Z(t,\cd) -\overline{Z}(t,\cd)\|^2_{\cZ^2_{\dbF} [T-k_\varrho,T]}
			=\|(Z(t,\cd)-\overline{Z}(t,\cd))\cd W\|^2_{ \text{BMO}[T-k_\varrho,T]}\\
			 \ds       &\leq   \frac{1}{c_1}\|(Z(t,\cd)-\overline{Z}(t,\cd))\cd
			\overline{W}( \cd;t)\|^2_{\text{BMO}_{\overline{\dbP}^t}[T-k_\varrho,T]}\\
             \ds       &=   \frac{1}{c_1}    \sup\limits_{\t\in\sT[T-k_\varrho,T]}
            \Big\| \overline{\dbE}^{t}_\t\[\int^T_{T-k_\varrho}|Z(t,s)-\cl Z(t,s)|^2ds\]\Big\|_\i\\
			 \ds      & \leq  \frac{k_\varrho}{c_1} \phi^2\(\sqrt{\overline{M}_1}\)
			\|Z(\cd,\cd)-\overline{Z}(\cd,\cd)\|^2_{\cZ^2_{\dbF}([0,T-k_\varrho]\ts [T-k_\varrho,T])}.
		\end{aligned}
	\end{equation*}
	Hence, recalling  the definition of $\|\cd\|_{\overline{\cZ}^2_{\dbF}([0,T-k_\varrho]\ts [T-k_\varrho,T])}$, we have by  (\ref{2.5}) that
    \begin{equation*}
		\begin{aligned}
			\ds             &\  \|(Z(\cd,\cd)-\overline{Z}(\cd,\cd)\|^2_{\cZ^2_{\dbF}([0,T-k_\varrho]\ts [T-k_\varrho,T])}\\
			\ns\ds       &\leq    \frac{k_\varrho}{c_1} \phi^2(\sqrt{\overline{M}_1})
			\|Z(\cd,\cd)-\overline{Z}(\cd,\cd)\|^2_{\cZ^2_{\dbF}([0,T-k_\varrho]\ts [T-k_\varrho,T])}.
		\end{aligned}
	\end{equation*}
	Note that  $k_\varrho$ is sufficiently small, then the above inequality implies   that
	\begin{equation*}
		\begin{aligned}
			\|Z(\cd,\cd)-\overline{Z}(\cd,\cd)\|^2_{\cZ^2_{\dbF}([0,T-k_\varrho]\ts [T-k_\varrho,T])}=0,
		\end{aligned}
	\end{equation*}
	in other words,
	\begin{equation*}
		Z(t,s)=\overline{Z}(t,s),\q \text{a.s.},\q (t,s)\in [0,T-k_\varrho]\ts [T-k_\varrho,T].
	\end{equation*}
	Consequently, thanks to  (\ref{4.42}) with  $\t=T-k_\varrho$, it yields that for almost all $t\in[0,T-k_\varrho]$,
	\begin{equation*}
		\begin{aligned}
			& |\psi^{T-k_\varrho}(t)-\overline{\psi}^{T-k_\varrho}(t)|^2\leq0,
		\end{aligned}
	\end{equation*}
	which means
	\begin{equation*}
		\psi^{T-\k_0}(t)=\overline{\psi}^{T-k_\varrho}(t),\q \text{a.s.},\q  t\in [0,T-k_\varrho].
	\end{equation*}
	Hence, the uniqueness of mean-field SFIE (\ref{4.38})  is proved.
	
	\ms
	
	\textbf{Step 3.}  Complete the proof by induction.
	
	\ms
	
	Based on the above steps,  we can prove the solvability of BSVIE \rf{1.1} by induction. Before going further,    let us recall what we have obtained. Note that from {Step 1} and {Step 2}, we have uniquely  confirmed the following values
	\begin{equation*}
		\left\{
		\begin{aligned}
			&Y^1(t),\q\q\ t\in[T-k_\varrho,T],\\
			&Z^1(t,s),\q\ (t,s)\in\D[T-k_\varrho,T]\bigcup\([0,T-k_\varrho]\ts [T-k_\varrho,T]\).
		\end{aligned}
		\right.
	\end{equation*}
	Next, let us solve BSVIE (\ref{1.1}) on the interval $[T-2k_\varrho, T-k_\varrho].$ According to  (\ref{4.41}), similar to {Step 1} and {Step 2},
	we can uniquely determine the following values
	\begin{equation*}
		\left\{
		\begin{aligned}
			&Y^2(t),\q\q\ t\in[T-2k_\varrho,T-k_\varrho),\\
			&Z^2(t,s),\q\  (t,s)\in\D[T-2k_\varrho,T-k_\varrho)\bigcup\([0,T-2k_\varrho]\ts [T-2\k_0,T-k_\varrho]\).
		\end{aligned}
		\right.
	\end{equation*}
	Now,  by defining
	\begin{equation*}
		\left\{
		\begin{aligned}
			\ds    Y(t)=    & \ Y^1(t)\mathbf{1}_{[T-k_\varrho,T]}(t)+Y^2(t)\mathbf{1}_{[T-2k_\varrho,T-k_\varrho)}(t),\\
			\ns\ds       Z(t,s)= &\  Z^1(t,s)\mathbf{1}_{\D[T-k_\varrho,T]\bigcup\big([0,T-k_\varrho]\ts [T-k_\varrho,T]\big)}(t,s)\\
			\ns\ds           & \ +Z^2(t,s)\mathbf{1}_{\D[T-2k_\varrho,T-k_\varrho)\bigcup\big([0,T-2\k_\varrho]\ts
                                  [T-2k_\varrho,T-k_\varrho]\big)}(t,s),
		\end{aligned}
		\right.
	\end{equation*}
	one has that the pair $(Y(\cd),Z(\cd,\cd))$ is the unique solution of mean-field SFIE (\ref{1.1}) on $[T-2\k_\varrho,T].$
	Moreover, on the interval $[T-2k_\varrho,T]$, due to the terminal value $\|\psi(\cd)\|^2_\i\leq (K_1)^2\leq  \overline{M}_1,$
	similar to {Step 1}, one could see
	\begin{equation*}
		\begin{aligned}
			\|Y(\cd)\|^2_{L^\i_{\dbF}[T-2\k_0,T]}\leq  \overline{M}_1\q\hb{and}\q
			\|Z(\cd,\cd)\|^2_{\cZ^2_\dbF (\D[T-2\k_0,T]\bigcup ([0,T-2\k_0]\ts [T-2\k_0,T]) )} \leq \overline{M}_2.
		\end{aligned}
	\end{equation*}
	Repeating the above procedure, the existence and uniqueness of adapted solutions to BSVIE \eqref{1.1} can be proved in finitely many steps.
\end{proof}

 \subsubsection{$g$ is unbounded in $\dbP_{Z(t,s)}$}

Let us   introduce  some constants, which will be used later. For $\d, L_0, \e_0>0$, denote
\begin{equation}\label{4.1-1}
	\begin{aligned}
		L_1=&\ \frac{(1-\alpha)\tilde{\gamma}\e_0}{8}
		\Big(\frac{1+\alpha}{2}\Big)^{\frac{1+\alpha}{1-\alpha}}
		\Big(\frac{4\gamma_0}{\tilde{\gamma}}  \Big)^{\frac{2}{1-\alpha}},\q\
		L_{2,\d}= \frac{(1-\alpha)\tilde{\gamma}\e_0}{8}\Big(\frac{1+\alpha}{2}\Big)^{\frac{1+\alpha}{1-\alpha}}
		\Big(\frac{4\d}{\tilde{\gamma}\e_0}  \Big)^{\frac{2}{1-\alpha}},\\
		L_3=&\  (K_1+K_3)\e_0+L_1 T+L_{2,L_0\g_0}T,\qq\
		L_4= L_0e^{L_0(K_1+K_3)+L_3}+(L_0+\e_0)(\b+\b_0),\\
		L_5=&\  \frac{2}{\widetilde{\g}}(\ln L_4+L_4), \q\q\qq\qq\qq\q\ \,\,\,
		L_6 =L_4e^{\frac{\tilde{\g}}{2}L_5}+L_4+\frac{\tilde{\g}}{2}L_5,\\
        L_7=&\ 3(K_1+K_3) +2L_1T+2L_{2,\g_0}T+3(\beta+\beta_0).
		%
	\end{aligned}
\end{equation}
%


\begin{assumption}\label{ass4.1}\rm
	Suppose that $g:\O\ts [0,T]\ts \dbR \ts \dbR^d \ts \cP_2(\dbR) \ts \cP_2(\dbR^d)\ra \dbR$  is
	$\sF_T\otimes\sB(\D[0,T]\ts \dbR\ts \dbR^{d}\ts \cP_2(\dbR)\ts  \cP_2(\dbR^{d}))$
	measurable  such that $s\mapsto g(t,s,y,z, \mu,\nu)$ is $\dbF$-progressively measurable for
	all $(t,y,z,\mu,\nu)\in  [0,T]   \ts \dbR\ts  \dbR^{ d} \ts \cP_2(\dbR) \ts
	\cP_2(\dbR^{d}) $, and the following conditions hold:
	\begin{enumerate}[~~\,\rm (i)]
		\item   For $(t,s)\in\D[0,T]$,  $y\in \dbR$, $z\in \dbR^d$, $\mu\in\cP_2(\dbR), \nu\in\cP_2(\dbR^d)$,
		$\dbP$-a.s.,
		\begin{equation*}
			\begin{aligned}
				& |g(t,s, y,z,\mu,\nu)|\leq \frac{\gamma}{2}|z|^2+\ell(t,s)+\b|y|
				+\b_0\cW_2(\mu,\d_{0})+ \gamma_0\cW_2(\nu,\d_{0})^{1+\alpha}.
			\end{aligned}
		\end{equation*}
		\item  For $(t,s)\in\D [0,T]$  and for $y,\bar{y}\in \dbR$, $z,\bar{z}\in \dbR^d$, $\mu,\bar{\mu}\in\cP_2(\dbR)$,
		$\nu,\overline{\nu}\in\cP_2(\dbR^d)$, $\dbP$-a.s.,
		$$
		\begin{aligned}
			&|g(t,s,y,z,\mu,\nu)-g(t,s,\bar{y}, \bar{z},\bar{\mu},\bar{\nu})|\\
			&\leq\phi\big(|y|\vee|\bar{y}|\vee \cW_2(\mu,\d_{0})\vee \cW_2(\bar{\mu},\d_{0})\big)\cd
			\big[(1+|z|+|\bar{z}|+\cW_2(\nu,\d_0)+\cW_2(\bar{\nu},\d_0))\\
			&\qq \cd (|z-\bar{z}|
			+|y-\bar{y}|+\cW_2(\mu,\bar{\mu}))
			+(1+\cW_2(\nu,\d_{0})^{\a}+\cW_2(\overline{\nu},\d_{0})^\a)\cW_2(\nu,\overline{\nu})\big].
		\end{aligned}$$
		%
             %
\item   For $(t,s)\in \D[0,T]$  and  $(y, z, \mu, \n)\in\dbR\times\dbR^d\times \cP_2(\dbR)\ts \cP_2(\dbR^d)$, it holds that $\dbP$-a.s.,
        $$
             g(t,s, y,z,\mu,\nu)\leq
            -\frac{\tilde{\gamma}}{2}|z|^2+\ell(t,s)+\beta|y|+\beta_0  \cW_2(\mu,\delta_{0})
             +\gamma_0  \cW_2(\n,\delta_{ 0 })^{1+\alpha}
         $$
or
          \begin{equation*}
           g(t,s, y,z,\mu,\nu)\geq \frac{\tilde{\gamma}}{2}|z|^2-\ell(t,s)
             -\beta|y|-\beta_0   \cW_2(\mu,\delta_{0})
                     -\gamma_0   \cW_2(\n,\delta_{0})^{1+\alpha}.
           \end{equation*}

\item   The free term $\psi(\cd)$ is bounded with $\|\psi(\cd)\|_{L^\i_{\sF_T}[0,T]}\leq K_1$, and
		the process  $\ell(\cd,\cd)$ belongs to the space  $L^\i([a,b];  L^{1,\i}_{\dbF}([\cd,b];\dbR^+))$ with $ \|\ell(\cd,\cd)\|_{L^{\i}_{\dbF}(\D[0,T])} \leq K_3$.
		
	\end{enumerate}
\end{assumption}

\begin{remark}\rm
      In \autoref{ass4.1},   condition (iii) is called a strictly
quadratic growth condition of the generator $g$ with respect to $z$.
Note that \autoref{ass4.1} is weaker than  the condition (A2) of Wang, Sun,Yong \cite{Wang-Sun-Yong-2019} without mean-field terms.
	For example, for
	$(t,s)\in \D[0,T],y\in \dbR, z\in \dbR^d,\mu\in \cP_2(\dbR), \nu\in\cP_2(\dbR^d),$ the following generator
	\begin{equation*}
		\begin{aligned}
			g(t,s,y,z,\mu,\nu)&\deq -|z|^2+\frac{1}{\sqrt{s-t}}+y +\cW_2(\nu,\d_0)^\frac{4}{3}+\cW_2(\mu,\d_0)
		\end{aligned}
	\end{equation*}
	 satisfies neither the condition (A2) in \cite{Wang-Sun-Yong-2019}  nor the aforementioned \autoref{ass4.1-1}, but it
    satisfies \autoref{ass4.1}. In fact, compared with those two conditions, here we  relax the dependence on $ \nu$.
\end{remark}

Next, before proving the existence and uniqueness of global solutions to BSVIE \rf{1.1}, we present the following result concerning the local solution of BSVIE \rf{1.1}. The proof is provided in the Appendix (see Section \ref{appendix}).

\begin{proposition}\label{th 4.2}\rm
	Under  \autoref{ass4.1},
	BSVIE (\ref{1.1})  admits a unique local solution
	$(Y(\cd), Z(\cd,\cd))\in \sB_\e(R_1, R_2)$ with
	\begin{equation*}
		R_1=2\overline{L}\q\hb{and}\q\ R_2=\overline{L} e^{2\overline{L}^2T},
	\end{equation*}
	where $\e$ and $\overline{L}$ are two  positive constants  depending only on
	$K_1,K_3,\tilde{\g},\g_0, \b,\b_0,\a,T$.
\end{proposition}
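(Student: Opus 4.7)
The plan is to adapt the strategy used for Proposition 3.4, namely combining the $t$-parameterization of Proposition 3.1 with a fixed-point argument on the ball $\sB_\e(R_1,R_2)$, but now handle the extra difficulty caused by the unbounded dependence of $g$ on $\cW_2(\nu,\d_0)$ (through the term $\g_0\cW_2(\nu,\d_0)^{1+\a}$). Specifically, for $(y(\cd),z(\cd,\cd))\in\sB_\e(R_1,R_2)$, I would first use the existence result for mean-field QBSDEs (Hao--Hu--Tang--Wen, quoted earlier in the paper) to solve, for almost every $t\in[T-\e,T]$, the $t$-parameterized family
\begin{equation*}
\cY(t,s)=\psi(t)+\int_s^T g(t,r,y(r),\cZ(t,r),\dbP_{y(r)},\dbP_{\cZ(t,r)})\,dr-\int_s^T\cZ(t,r)\,dW(r),
\end{equation*}
and define $\L(y,z):=(Y,Z)$ via $Y(t):=\cY(t,t)$, $Z(t,s):=\cZ(t,s)$ as in Proposition 3.1.

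The first main task (self-mapping $\L:\sB_\e(R_1,R_2)\to\sB_\e(R_1,R_2)$) is the delicate one. I would use the strict quadratic growth condition \autoref{ass4.1}(iii) together with the exponential transform $x\mapsto e^{\tilde\g |x|}$ (or the function $\Xi$ used in Theorem 3.4) applied via It\^o's formula to $\cY(t,s)$. The new difficulty is the term $\g_0\cW_2(\dbP_{z(\cd)},\d_0)^{1+\a}$, which is unbounded. This is handled by Young's inequality with conjugate exponents $\tfrac{2}{1+\a}$ and $\tfrac{2}{1-\a}$ to write, for any $\d>0$,
\begin{equation*}
\g_0 w^{1+\a}\leq L_{2,\d}+\tfrac{\tilde\g\e_0\d}{4}w^2,
\end{equation*}
where $L_{2,\d}$ is precisely the constant defined in \eqref{4.1-1}. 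Taking $w=\cW_2(\dbP_{z(t,r)},\d_0)$, one integrates in $r$ and absorbs the quadratic part into the dissipation $-\tfrac{\tilde\g}{2}|z|^2$ coming from condition (iii); the residual $L_1$, $L_{2,L_0\g_0}$ contributions enter the bound $L_3$ and ultimately $\bar L$. This yields the $L^\i$ estimate $\|Y\|_{L^\i_\dbF}\le R_1$. Then applying It\^o to $|\cY|^2$ (or the $\Xi$ transform as in (3.9-111)) and using the BMO-type estimate on $\int_s^T|\cZ(t,r)|^2\,dr$ produces the BMO bound $\|Z\|_{\cZ^2_\dbF(\D[T-\e,T])}^2\le R_2$, provided $\e$ is small.

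The second task is the contraction. For $(y,z),(\h y,\h z)\in\sB_\e(R_1,R_2)$ with images $(Y,Z),(\h Y,\h Z)$, I would subtract the two $t$-parameterized BSDEs and use the local-Lipschitz estimate \autoref{ass4.1}(ii). Because $Z$ and $\h Z$ lie in $\cZ^2_\dbF$, the stochastic integrals they generate are BMO martingales (cf. \eqref{4.3.1}), so Girsanov's theorem together with the reverse H\"older inequality \eqref{2.2} allows one to linearize away the $|z|^2$ term at the cost of a BMO-controlled change of measure. The resulting estimate
\begin{equation*}
\|(Y,Z)-(\h Y,\h Z)\|_{\sB_\e}\le C(\e)\,\|(y,z)-(\h y,\h z)\|_{\sB_\e},
\end{equation*}
with $C(\e)\to 0$ as $\e\to 0$, follows by first absorbing the Wasserstein-$\cW_2(\nu,\bar\nu)$ term (using $\cW_2\le\|z-\h z\|_{L^2}$) and then invoking H\"older in $s$ on an interval of length $\e$. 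Taking $\e$ small enough makes $\L$ a strict contraction, and the unique fixed point is the desired local solution.

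The main obstacle is Step 2: choosing the constants $R_1,R_2,\e$ consistently with the Young-inequality absorption so that both the a priori $L^\i$ bound on $Y$ (via the exponential transform under strict quadratic growth) and the BMO bound on $Z$ close up simultaneously. This is exactly the role of the tower of constants $L_1,L_{2,\d},L_3,\ldots,L_7$ in \eqref{4.1-1}: they record the balance between the quadratic dissipation $\tfrac{\tilde\g}{2}|z|^2$, the Young absorption of the $\cW_2(\nu,\d_0)^{1+\a}$ term, the linear contributions from $\b|y|+\b_0\cW_2(\mu,\d_0)$, and the logarithmic reshuffle needed to close the exponential estimate. Once these constants are fixed, $\bar L$ emerges as an explicit function of $K_1,K_3,\tilde\g,\g_0,\b,\b_0,\a,T$ alone, and $\e$ must then be chosen small enough to validate both the self-mapping and the contraction, which is what yields the stated local well-posedness.
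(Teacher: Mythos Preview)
Your proposal is correct and follows the paper's approach: a fixed-point on $\sB_\e(R_1,R_2)$ via the $t$-parameterization, with the self-mapping step handled through the strict quadratic condition (iii) combined with Young's inequality for the $(1+\a)$-term (the paper isolates this as a separate a priori lemma in the appendix), and the contraction via a BMO--Girsanov linearization. One minor correction: since the law slot in your auxiliary BSDE is $\dbP_{\cZ(t,r)}$ (the output), not $\dbP_{z(t,r)}$, the $\cW_2(\nu,\bar\nu)$ term in the contraction is bounded by $\|\d\cZ(t,\cd)\|_{L^2}$ rather than $\|z-\h z\|_{L^2}$ and must be absorbed on the left via the factor $\e^{1-\a}$---which is exactly what the paper does.
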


Now, we  give the existence and uniqueness of global solutions
to mean-field BSVIE (\ref{1.1}).

\begin{theorem}\label{th 4.3}\rm
	Under \autoref{ass4.1}, BSVIE (\ref{1.1})  admits a unique global adapted solution
	$(Y(\cd), Z(\cd,\cd))\in L^\i_{\dbF}[0,T]\ts  \cZ^2_{\dbF}(\D[0,T];\dbR^d).$  Moreover, there exist two positive constants
	$M_1$ and $M_2$  depending on  $K_1,K_3, T,\tilde{\g},\g_0, \b,\b_0,\a$ such that
	\begin{equation*}
		\|Y(\cd)\|_{L_{\dbF}^\i[0,T]}\leq M_1\q\hb{and}\q\ \|Z(\cd,\cd)\|^2_{\cZ^2_{\dbF}(\D[0,T])}\leq M_2.
	\end{equation*}
\end{theorem}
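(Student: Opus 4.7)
The plan is to mirror the three-step strategy of \autoref{th3.4}, with the extra constants in (\ref{4.1-1}) doing the delicate bookkeeping made necessary by the fact that $g$ is now unbounded in $\dbP_{Z(t,s)}$. Via the $t$-parametrization of \autoref{pro3.1}, solving BSVIE \rf{1.1} reduces to solving the mean-field BSDE \rf{3.211} for $(\cY(t,\cd),\cZ(t,\cd))$ on each subinterval and setting $Y(t)=\cY(t,t)$, $Z(t,s)=\cZ(t,s)$. \autoref{th 4.2} supplies a local solution with length $\e$ depending only on the universal constants; the task is to propagate bounds and iterate.

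\emph{Step 1 (a priori $L^\infty$ bound on $Y$).} On a local interval I would apply It\^{o}'s formula to an exponential functional of the form $\exp(L_0|\cY(t,s)|)$ (or, equivalently, to a suitably modified $\Xi$-type function as in \rf{3.9-111}) and use the strictly quadratic growth condition \autoref{ass4.1}-(iii). The quadratic part $-\frac{\tilde{\g}}{2}|\cZ|^2$ supplied by (iii) exactly cancels the second-order derivative term when $L_0$ is chosen so that $L_0\les\tilde\g$, and the dangerous law-dependent term $\g_0\cW_2(\dbP_{\cZ(t,r)},\d_0)^{1+\a}$ is absorbed by Young's inequality with conjugate exponents $\frac{2}{1+\a}$ and $\frac{2}{1-\a}$, generating precisely the constants $L_1$ and $L_{2,\d}$ in \rf{4.1-1}. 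Taking $s=t$ and passing to the essential supremum over $(t,\o)$ then yields $\|Y(\cd)\|_{L^\i_{\dbF}[0,T]}\les M_1$ where $M_1$ is expressible in terms of $L_3,L_4,L_5,L_6$.

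\emph{Step 2 (BMO bound on $Z$).} Once $\|Y(\cd)\|_{L^\i_{\dbF}[0,T]}$ is controlled, I would test \rf{3.211} against the $\Xi$-type function of \rf{3.9-111} driven by $\tilde\g$ (exploiting $\g\Xi'-\Xi''=-1$) and apply condition \autoref{ass4.1}-(iii) again. This yields, for any $\t\in\sT[t,T]$,
\begin{equation*}
\tfrac{\tilde\g}{2}\dbE_\t\!\int_\t^T\!|\cZ(t,r)|^2 dr \les C\bigl(1+\|\psi\|_\i+\|Y\|_{L^\i_{\dbF}}\bigr) +\g_0\,\dbE_\t\!\int_\t^T\cW_2(\dbP_{\cZ(t,r)},\d_0)^{1+\a}dr,
\end{equation*}
and since $1+\a<2$ a second Young's inequality absorbs the law-dependent term into the left-hand side, delivering $\|Z(\cd,\cd)\|^2_{\cZ^2_{\dbF}(\D[0,T])}\les M_2$. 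The constant $L_7$ in \rf{4.1-1} is precisely what emerges from this absorption.

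\emph{Step 3 (patching via mean-field SFIE and induction).} Since the a priori bounds $M_1,M_2$ in Steps 1--2 do not depend on the length of the interval, the local step $\varrho$ provided by \autoref{th 4.2} (which depends only on $M_1,M_2$ and the structural constants) is uniform along a backward iteration. On each backward window $[T-(k+1)\varrho,T-k\varrho]$ I would then repeat verbatim the SFIE argument of \autoref{th3.4}, Step 2: the right-hand side data $\psi^{T-k\varrho}(\cd)$ for the next step is the terminal value of a mean-field SFIE, whose solvability and $\sF_{T-k\varrho}$-measurability come from solving \rf{3.211} on $[T-k\varrho,T]$ via \cite{Hao-Hu-Tang-Wen-2022}; uniqueness follows from the standard Girsanov argument using the BMO bound from Step 2 together with the norm equivalence \rf{2.5} and the reverse H\"older inequality \rf{2.2}. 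A finite number of iterations covers $[0,T]$.

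The main obstacle is Step 1. The dependence of $g$ on $\dbP_{Z(t,s)}$ through $\cW_2(\nu,\d_0)^{1+\a}$ is not controllable by the simple estimate \rf{3.20} that worked in the bounded case of \autoref{th3.4}; one must instead exploit the sign in condition (iii) together with a carefully tuned exponential transform, and the Young-type absorption must close with constants depending only on the a priori data --- not on the distribution $\dbP_{\cZ(t,r)}$ itself. Once that balance (which is exactly what \rf{4.1-1} encodes) is achieved, the remaining arguments go through as in \autoref{th3.4}.
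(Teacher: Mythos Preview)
Your three-step architecture is right, and Steps 2--3 are essentially how the paper proceeds. The genuine gap is in Step 1.

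Applying It\^{o} to $\exp(L_0|\cY(t,s)|)$ and invoking the \emph{strictly quadratic} condition \autoref{ass4.1}-(iii) does not work: (iii) is one-sided (either $g\leq -\tfrac{\tilde\g}{2}|z|^2+\cdots$ or $g\geq \tfrac{\tilde\g}{2}|z|^2-\cdots$), so when you differentiate $|\cY|$ the sign $\sgn(\cY)$ flips and you cannot cancel the second-order term on the unfavourable half-line. The paper instead uses the two-sided bound (i) (with $L_0=\g$) in the exponential transform; this yields
\[
|\cY(t,r)|\ \leq\ K_1+K_3+(\b+\b_0)\!\int_r^T\!\|Y(\cd)\|_{L^\i_\dbF[u,T]}du
+\g_0\!\int_r^T\!\|\cZ(t,u)\|^{1+\a}_{L^2(\O)}du,
\]
i.e.\ a bound on $Y$ that still carries a $\|\cZ\|^{1+\a}_{L^2}$-term. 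The paper uses Young to replace this by $\tfrac{\tilde\g\e_0}{4}\!\int\dbE|\cZ|^2+L_{2,\g_0}T$, and then---this is the point you are missing---uses condition (iii) \emph{separately} (by integrating the BSDE itself, not via an exponential) to get the reverse inequality
\[
\dbE_r\!\int_r^T\!\tfrac{\tilde\g\e_0}{4}|\cZ(t,u)|^2du\ \leq\ \e_0|\cY(t,r)|+\e_0(K_1+K_3)+L_1T+\e_0(\b+\b_0)\!\int_r^T\!\|Y(\cd)\|_{L^\i_\dbF[u,T]}du.
\]
Taking expectations and substituting back produces $|\cY(t,r)|\leq C+\e_0\,\dbE|\cY(t,r)|+\cdots$, and choosing $\e_0=\tfrac12$ closes the loop and feeds into the ODE/Gronwall comparison that gives $M_1$ (this is where $L_7$ enters). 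In short: (i) bounds $|\cY|$ in terms of $\dbE|\cZ|^2$; (iii) bounds $\dbE|\cZ|^2$ in terms of $|\cY|$; the two must be \emph{coupled} and then bootstrapped. Your Step 1 tries to do both jobs with (iii) inside a single exponential, which fails on sign grounds and never confronts the fact that Young produces the \emph{deterministic} term $\e\,\dbE|\cZ|^2$, which the \emph{pointwise} It\^{o} second-order term cannot absorb. Once the $M_1$ bound is in hand, your Step 2 (or, more simply, re-using the displayed (iii)-estimate with $r$ replaced by a stopping time) gives $M_2$, and Step 3 goes through as you describe.
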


\begin{proof}
	 First, for a small enough constant $\kappa_0>0$, we prove that
	\begin{align*}
		\|Y(\cd)\|_{L_\dbF^\infty[T-\kappa_0,T]}\leq M_1\q\ \hb{and}\q\
		\|Z(\cd,\cd)\|^2_{\cZ^2_\dbF(\D[T-\kappa_0,T])}\leq M_2.
	\end{align*}
	%


     Let $(\cY(t,\cd),\cZ(t,\cd))$ be the solution to Eq. (\ref{3.211}) with $(Y(\cd), Z(\cd,\cd))$ on the interval $[T-\k_0,T]$.
    We denote
	\begin{equation*}\label{3.20-1}
		\Psi(u,x;t)\deq\exp\Big\{ \gamma x+\gamma\int_0^u\(\ell(t,s)+\beta|Y(s)|+\beta_0\|Y(s)\|_{L^2(\Om)}
		+\gamma_0\|\cZ(t,s)\|^{1+\alpha}_{L^2(\Om)}\)ds\Big\},\ x>0.\end{equation*}
	Then                applying It\^{o}-Tanaka's formula to $\Psi(u,|\cY(t,u)|;t)$, it follows from  \autoref{ass4.1}-(i)  that
	\begin{equation*}
		\begin{aligned}
			\ds d\Psi(u,|\cY(t,u)|;t)&=\gamma\Psi(u,|\cY(t,u)|;t)
			\cd\Big\{-\sgn(\cY(t,u))g(t,u,Y(u),\cZ(t,u),\mathbb{P}_{Y(u)},\mathbb{P}_{\cZ(t,u)} )\\
			& \q +\ell(t,u)+\beta|Y(u)|+\beta_0\|Y(u)\|_{L^2(\Om)} +\gamma_0\|\cZ(t,u)\|^{1+\alpha}_{L^2(\Om)}
			+\frac{1}{2}\gamma|\cZ(t,u)|^2\Big\} du \\
			&\q +\gamma\Psi(u,|\cY(t,u)|;t)\sgn(\cY(t,u))\cZ(t,u)dW(u)+\gamma\Psi(u,|\cY(t,u)|;t)dL(u)\\
			\ns\ds
			&\geq \gamma\Psi(u,|\cY(t,u)|;t)\sgn(\cY(t,u))\cZ(t,u)dW(u),
		\end{aligned}
	\end{equation*}
	where  the term $L(\cd)$ is the local time of the process $\cY(t,\cd)$ at time $t$.
By	integrating from $r$ to $T$ firstly and then   taking the conditional expectation $\mathbb{E}_r[\cdot]$ on
	both sides of the above inequality, one has
	\begin{equation*}
		\begin{aligned}
			\exp\big(  \gamma|\cY(t,r)|  \big)
			&\leq \dbE_r\exp\Big\{ \gamma|\psi(t)|+\gamma\int_r^T
			\(\ell(t,u)+\beta|Y(u)|+\beta_0\|Y(u)\|_{L^2(\Om)}+\gamma_0\|\cZ(t,u)\|^{1+\alpha}_{L^2(\Om)}\)du\Big\}\\
			\ns\ds &\les \exp\Big\{\gamma(K_1+K_3)+\g\int_r^T(\beta+\beta_0)\|Y(\cd)\|_{L_\dbF^\infty[u,T]}du
			+\g\gamma_0\int_r^T\|\cZ(t,u)\|^{1+\alpha}_{L^2(\Om)}du\Big\},
		\end{aligned}
	\end{equation*}
	which implies that for $t\leq r\leq T$,
	\begin{equation*}
		|\cY(t,r)|\leq K_1+K_3+\int_r^T(\beta+\beta_0)\|Y(\cd)\|_{L_\dbF^\infty[u,T]}du
		+\gamma_0\int_r^T\|\cZ(t,u)\|^{1+\alpha}_{L^2(\Om)}du.
	\end{equation*}
Let $\e_0$ be a positive constant which will be specified later. Now, we analyze   the last term of the above inequality.   Making use of  the relation (\ref{4.9-1})  in Appendix,
	it follows
	\begin{equation}\label{4.34-1}
		\begin{aligned}
			|\cY(t,r)|\leq K_1+K_3 +\int_r^T(\beta+\beta_0) \|Y(\cd)\|_{L_\dbF^\infty[u,T]}du  +  \int_r^T\frac{\widetilde{\g}\e_0}{4}\dbE|\cZ(t,u)|^2du+L_{2,\g_0}T,
		\end{aligned}
	\end{equation}
	where $L_{2,\g_0}$ is defined in (\ref{4.1-1}) with $\delta$ replaced by $\gamma_0$.
	In addition,  similar to the relation (\ref{4.9}) in Appendix, one has
	\begin{equation}\label{4.34-21}
		\begin{aligned}
			\dbE_r\int_r^T\frac{\widetilde{\g}\e_0}{4}|\cZ(t,u)|^2du
			\leq \e_0   |\cY(t,r)|  +\e_0(K_1+K_3)+L_1T
			+ \e_0(\beta+\beta_0)\int_r^{T}\|Y(\cd)\|_{L^\i_{\dbF}[u,T]} du.
		\end{aligned}
	\end{equation}
	Now, by taking the expectation on both sides of \rf{4.34-21} firstly and then inserting  it into (\ref{4.34-1}), we have
	\begin{equation*}
		\begin{aligned}
			|\cY(t,r)|& \leq (K_1+K_3)(\e_0+1)+L_1T+L_{2,\g_0}T+  \e_0  \dbE |\cY(t,r)| \\
			& \q+\int_r^T(\beta+\beta_0)(1+\e_0) \|Y(\cd)\|_{L_\dbF^\infty[u,T]}du .
		\end{aligned}
	\end{equation*}
By	taking the expectation on both sides of the above inequality  and letting  $\e_0=\frac{1}{2} $ leads to
	\begin{equation*}
		\begin{aligned}
			\frac{1}{2}\dbE|\cY(t,r)|&\leq \frac{3}{2}(K_1+K_3) +L_1T+L_{2,\g_0}T
			+\frac{3}{2}(\beta+\beta_0)\int_r^T\|Y(\cd)\|_{L_\dbF^\infty[u,T]}du.
		\end{aligned}
	\end{equation*}
	Combining the above two inequalities provides
	\begin{equation}\label{4.34-2}
		\begin{aligned}
			|\cY(t,r)|&\leq 3(K_1+K_3) +2L_1T+2L_{2,\g_0}T
			+3(\beta+\beta_0)\int_r^T\|Y(\cd)\|_{L_\dbF^\infty[u,T]}du.
		\end{aligned}
	\end{equation}
	In particular, when $r=t$, we deduce by \autoref{pro3.1} that
	\begin{equation}\label{4.34-1-1}
		\begin{aligned}
			|Y(t)| & \leq3(K_1+K_3) +2L_1T+2L_{2,\g_0}T
			+3(\beta+\beta_0)\int_t^T\|Y(\cd)\|_{L_\dbF^\infty[u,T]}du.
		\end{aligned}
	\end{equation}
	In order to further obtain the estimate of $Y(\cd)$, we  consider the following ordinary differential equation (ODE)
	\begin{equation*}
		\a(t) =L_7+\int_t^T L_7 \a(u)du,\q t\in[0,T],
	\end{equation*}
	whose solution  is expressly given by $	\a(t)=L_7e^{L_7(T-t)},\  t\in[0,T].$  Note that $\a(\cd)$ is a continuous, non-increasing function, and
	$ \|\psi(\cd)\|_{L^\i_{\cF_T}[0,T]}\leq L_7=\a(T)\leq \a(0). $
	Then, from (\ref{4.34-1-1}), we have
	\begin{equation}\label{4.35-1}
		\|Y(\cd)\|_{L^\i_{\dbF}[t,T]}\leq \a(t)\les \a(0),\  \forall t\in[T-\k_0,T].
	\end{equation}
  Inserting (\ref{4.35-1}) into (\ref{4.34-2}), we deduce
  \begin{align*}
  |\cY(t,r)|&\leq 3(K_1+K_3) +2L_1T+2L_{2,\g_0}T
			+3(\beta+\beta_0)T\a(0)\deq\widetilde{M}.
  \end{align*}
Set
 $$ M_1\deq \max\{ \a(0), \widetilde{M}\}.$$
Thence, we have that for any $(t,r)\in\D[T-\k_0,T]$,
\begin{equation}\label{4.35}
\|Y(\cd)\|_{L^\i_{\dbF}[t,T]}\leq M_1\q\hbox{and}\q   |\cY(t,r)| \leq M_1.
\end{equation}
On the other hand,
	by letting $\e_0=\frac{1}{2}$, then  (\ref{4.34-21}) becomes
	\begin{equation}\label{4.911}
       \begin{aligned}
		\dbE_r\int_r^T\frac{\widetilde{\g}}{8}|\cZ(t,u)|^2du&\leq \frac{1}{2}|\cY(t,r)|  +\frac{K_1+K_3}{2}+L_1T
		+ \frac{\beta+\beta_0}{2}\int_r^{T}\|Y(\cd)\|_{L^\i_{\dbF}[u,T]} du\\
        &\leq  \frac{1}{2}\(M_1+K_1+K_3+2L_1T+(\b+\b_0)TM_1\).
         \end{aligned}
	\end{equation}
    Note that (\ref{4.911}) still holds  if we replace $\g$ by stopping time $\t\in\sT[t,T]$.
   Finally,	by combining (\ref{4.35}), (\ref{4.911}) and recalling
   the definition of $\|\cd\|_{ \cZ^2_{\dbF}(\D[T-{\k_0},T])}$,  we obtain
	\begin{equation*}
		\|Z(\cd,\cd)\|^2_{ \cZ^2_\dbF(\D[T-\kappa_0,T])}\leq
\frac{4}{\widetilde{\g}}\(M_1+K_1+K_3+2L_1T+(\b+\b_0)TM_1\)\deq M_2.
	\end{equation*}

	Taking $T-\k_0$ as the terminal time and $\psi^{T-\k_\varrho}(t)=\cY (t, T-\k_0)$ as the terminal value,
making a similar analysis  as the proof of \autoref{th3.4},
 it is easy to prove  the well-posedness and boundedness of global solutions to BSVIE (\ref{1.1}) on the interval $[0,T]$. This completes the proof.
\end{proof}

\section{Particle Systems}\label{sec 5}

This section focuses on the convergence and convergence rate of the particle system in Eq. \eqref{1.3} associated with Eq. \eqref{1.1}.
Let ${\psi^i(\cdot); 1 \leq i \leq N}$ be $N$ independent copies of the free term $\psi(\cdot)$, and let $\dbF^N = {\sF^N_t}, {t \geq 0}$ denote the natural filtration of $W^i, 1 \leq i \leq N$, augmented by all $\dbP$-null sets. Here, $W^i, 1 \leq i \leq N$ are $N$ independent $d$-dimensional Brownian motions, as introduced earlier.

In the following, we use the function: $\d_{ij}=1$, if $i=j$; or else, it equals $0$.
Moreover,  we always assume that $s\mapsto g(t,s,y,z, \mu,\nu)$ is  $\dbF^N$-progressively  measurable  for
          all $(t,y,z,\mu,\nu)\in[0,T]\ts \dbR\ts  \dbR^{ d} \ts \cP_2(\dbR) \ts
          \cP_2(\dbR^{d}) $.
 Additionally, let $(Y^{N,i}(\cd),Z^{N,i,j}(\cd,\cd))$  be the adapted solutions to the
         particle system (\ref{1.3}), and let  $(\cl{Y}^{i}(\cd),\cl{Z}^i(\cd,\cd))$ be the solution to
          the following mean-field  BSVIE:
          \begin{equation}\label{5.2}
          \begin{aligned}
            \cl{Y}^i(t)=\psi^i(t)+\int_t^Tg(t,s,\cl{Y}^i(s),\cl{Z}^{i}(t,s), \cl{\mu}(s),\cl{\nu}(t,s))ds-\int_t^T\cl{Z}^{i}(t,s)dW^i(s),
           \end{aligned}
           \end{equation}
           where
           \begin{equation}\label{5.3-11}
           \begin{aligned}
               \cl{\mu}(s)\deq \mathbb{P}_{\cl{Y}^i(s)}\q \hbox{and}\q \cl{\nu}(t,s)\deq  \mathbb{P}_{\cl{Z}^i(t,s)}.
              \end{aligned}
              \end{equation}

Next, following the approach analogous to that in \autoref{sec 3},   we   study the convergence and convergence rate of the particle system
\rf{1.3} by dividing the analysis into two cases: the linear growth case and the quadratic growth case.

\subsection{Linear growth case}

In this subsection, we study the  convergence rate of the particle system (\ref{1.3}) in the case where $g$ is of linear growth in $(y,z,\mu,\nu)$.
First, we give the  convergence of the particle system (\ref{1.3}).
\begin{theorem}\label{th4.1} \rm
Let \autoref{ass3.1} be in force and $g$ is Lipschitz continuous with respect to  $(\mu,\nu)$ in 1-Wasserstein distance.
Let $(Y^{N,i}(\cd),Z^{N,i,j}(\cd,\cd))_{1\leq j\leq N}$, $(\cl{Y}^{i}(\cd),\cl{Z}^i(\cd,\cd))$ be the solutions of   Eq. (\ref{1.3}) and  Eq. (\ref{5.2}), respectively. Then,  for  $1<p<2$,
\begin{equation*}
		\begin{aligned}
			&\ \mathbb{E}\bigg[\int_0^T|  Y^{N,i}(t)- \cl{Y}^i(t)|^pdt
			+ \int_0^T\(\int_t^T \sum\limits_{j=1}^N|Z^{N,i,j}(t,s)- \d_{ij}\cl{Z}^i(t,s)|^2ds\)^\frac{p}{2} dt\bigg] \nn\\
			   &\leq C  \dbE\int_0^T  \cW_p^p(\mu^N(t), \overline{\mu}(t))dt
              +C  \dbE\int_0^T\int_t^T\cW_p^p(\nu^N(t,r), \overline{\nu}(t,r))drdt.
		\end{aligned}
	\end{equation*}
\end{theorem}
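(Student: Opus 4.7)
The plan is to rely on the $t$-parametrization of \autoref{pro3.1}: for each fixed $t$, both $Y^{N,i}(t)$ and $\overline Y^i(t)$ coincide with the values at $s=t$ of auxiliary BSDEs on $[t,T]$, with common terminal $\psi^i(t)$ but distinct generators and drivers. Writing $\Delta Y^i:=Y^{N,i}-\overline Y^i$ and $\widetilde Z^{i,j}(t,s):=Z^{N,i,j}(t,s)-\delta_{ij}\overline Z^i(t,s)$, subtracting the two parametrized BSDEs yields a linear-type BSDE on $[t,T]$ with zero terminal condition, driven by $(W^1,\ldots,W^N)$, whose generator difference splits, via the Lipschitz assumption in \autoref{ass3.1} and the theorem's $\mathcal W_1$-Lipschitz hypothesis, into a term linear in $\widetilde Z^{i,i}(t,r)$ plus a forcing bounded by
\[
L\bigl(|\Delta Y^i(r)|+\mathcal W_1(\mu^N(r),\overline\mu(r))+\mathcal W_1(\nu^N(t,r),\overline\nu(t,r))\bigr).
\]
The linear $\widetilde Z^{i,i}$ term is removed by a Girsanov change of measure on the $i$-th Brownian component; because the Lipschitz constant $L$ is independent of $N$, the resulting stochastic exponential is a BMO martingale of norm uniformly bounded in $N$ and $t$, so reverse H\"older gives $L^{p'}$-moments of its density uniform in $N$ and $t$ for every $p\in(1,2)$.

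I would then apply the $L^p$ a priori estimate for BSDEs of Briand--Delyon--Hu--Pardoux--Stoica (valid for every $p>1$) to the resulting BSDE under the new measure, and transfer back via H\"older to obtain, uniformly in $N$,
\[
\mathbb E|\Delta Y^i(t)|^p+\mathbb E\Bigl(\int_t^T\sum_{j=1}^N|\widetilde Z^{i,j}(t,s)|^2\,ds\Bigr)^{p/2}\leq C\,\mathbb E\Bigl(\int_t^T \bigl[|\Delta Y^i(s)|+\mathcal W_1(\mu^N(s),\overline\mu(s))+\mathcal W_1(\nu^N(t,s),\overline\nu(t,s))\bigr]\,ds\Bigr)^{p}.
\]
Jensen's inequality (since $p>1$) combined with $\mathcal W_1\leq\mathcal W_p$ bounds the right-hand side by $C\int_t^T\mathbb E|\Delta Y^i(s)|^p\,ds+C\int_t^T\mathbb E\,\mathcal W_p^p(\mu^N(s),\overline\mu(s))\,ds+C\int_t^T\mathbb E\,\mathcal W_p^p(\nu^N(t,s),\overline\nu(t,s))\,ds$. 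A backward Gronwall argument applied to $\phi(t):=\mathbb E|\Delta Y^i(t)|^p$ absorbs the $\phi$-integral, and integrating in $t\in[0,T]$ and adding the $\widetilde Z$-contribution yields the announced inequality.

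The main obstacle is what the paper emphasizes just before the statement: one does not in general have $\esssup_{t\in[0,T]}\mathbb E\int_t^T|\overline Z^i(t,s)|^2\,ds<\infty$, only the integrated triangle version $\int_0^T\!\!\int_t^T\mathbb E|\overline Z^i(t,s)|^2\,ds\,dt<\infty$ (cf.\ the discussion preceding the theorem), which blocks the usual $L^2$ BSDE a priori estimate. This forces us to work in $L^p$ with some $p\in(1,2)$ and to verify that the Girsanov density, the BMO constants in \eqref{2.5}--\eqref{2.2}, and all constants in the $L^p$ BSDE estimate are uniform in both $N$ and $t$ --- a uniformity guaranteed by the $N$-independent Lipschitz constant of \autoref{ass3.1}.
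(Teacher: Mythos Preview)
Your overall strategy---$t$-parametrization, subtract the two BSDEs, $L^p$ a priori estimate, Gronwall in $t$, then integrate---matches the paper's, but you have inserted an unnecessary Girsanov step. Under \autoref{ass3.1} the generator $g$ is globally Lipschitz in $z$, so the $L^p$ stability estimate of Briand--Delyon--Hu--Pardoux--Stoica \cite{Briand-Delyon-Hu-Pardoux-Stoica-2003} applies \emph{directly} to the difference BSDE: it absorbs the Lipschitz $z$-dependence into the constant and leaves on the right only the generator difference evaluated at the common second-solution value $\overline{\mathbb Z}^i(t,r)$. That is exactly what the paper does, obtaining in one stroke
\[
\dbE\Bigl[|\Delta\mathbb Y^{N,i}(t,s)|^p+\Bigl(\int_s^T\sum_{j}|\Delta\mathbb Z^{N,i,j}(t,r)|^2\,dr\Bigr)^{p/2}\Bigr]
\le C\,\dbE\Bigl(\int_s^T|\Delta Y^{N,i}(r)|+\cW_1(\mu^N,\overline\mu)+\cW_1(\nu^N,\overline\nu)\,dr\Bigr)^{p},
\]
after which Jensen and Gronwall finish as you describe. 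Your Girsanov/BMO/reverse H\"older route is the machinery the paper reserves for the \emph{quadratic} case (\autoref{th 5.1111}); invoking it here is harmless in principle but adds a delicate ``transfer back via H\"older'' step that you leave unspecified and that would in fact shift exponents, whereas the direct Lipschitz estimate avoids this entirely.

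Your closing paragraph about the ``main obstacle'' is also slightly misplaced: the restriction $1<p<2$ plays no role in the proof of \autoref{th4.1} itself (the Briand et al.\ estimate holds for every $p>1$). As the paper explains in \autoref{re4.33}, the constraint $p<2$ only bites in \autoref{th4.2}, where one needs $\int_0^T\!\int_t^T\dbE|\overline Z^i(t,s)|^2\,ds\,dt<\infty$ to control the empirical-measure term via Fournier--Guillin.
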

\begin{proof}
Thanks to Lin \cite[Theorem 4.1]{Lin-2002}, the  multi-dimensional BSVIE (\ref{1.3}) has a unique solution
$(\mathbf{Y}^N,\mathbf{Z}^N)=(Y^{N,i},Z^{N,i,j} )_{i,j=1,\cds,N}\in L^2_{\dbF}([0,T];\dbR^N)\ts
L^2_{\dbF}([0,T];(\dbR^{N\ts d})^N).$ By $(\mathbb{Y}^{N,i}(\cd,\cd), \mathbb{Z}^{N,i,i}(\cd,\cd))$,
 we denote the unique solution to the following BSDE (parameterized by $t$)
\begin{equation}\label{4.3-111}
\begin{aligned}
\mathbb{Y}^{N,i}(t,s)&=\psi^i(t)+\int_s^Tg(t,r,  Y^{N,i}(r), \mathbb{Z}^{N,i,i}(t,r), \frac{1}{N}\sum\limits_{i=1}^N \d_{Y^{N,i}}(r),
\frac{1}{N}\sum\limits_{i=1}^N \d_{\mathbb{Z}^{N,i,i}}(t,r) )dr\\
&\q -\int_s^T\sum\limits_{j=1}^N  \mathbb{Z}^{N,i,j}(t,r) dW^j(r),\q  s\in[t,T].
\end{aligned}
\end{equation}
It follows from the uniqueness of BSVIEs that
\begin{equation}\label{4.4111}
Y^{N,i}(t)=\mathbb{Y}^{N,i}(t,t)\q\hbox{and}\q   Z^{N,i,j}(t,s)=\mathbb{Z}^{N,i,j}(t,s),\q  (t,s)\in\D[0,T].
\end{equation}
Additionally,   \autoref{pro3.1}  provides that
\begin{align*}
\overline{Y}^{i}(t)=\overline{\mathbb{Y}}^{i}(t,t)\q\hbox{and}\q   \overline{Z}^{i}(t,s)=\overline{\mathbb{Z}}^{i}(t,s),\q  (t,s)\in\D[0,T],
\end{align*}
where  for almost all $t\in[0,T]$, the pair
$(\overline{\mathbb{Y}}^{i}(t,\cd),\overline{\mathbb{Z}}^{i}(t,\cd))$ solves the following equation
\begin{equation*}
\begin{aligned}
\overline{\mathbb{Y}}^{i}(t,s)=\psi^i(t)+\int_s^Tg(t,r,  \overline{Y}^{i}(r), \overline{\mathbb{Z}}^{i}(t,r),
\cl{\mu}(r), \mathbb{P}_{\cl{\mathbb{Z}}^i(t,r)} )dr
 -\int_s^T   \overline{\mathbb{Z}}^{i}(t,r) dW^i(r),\q  s\in[t,T].
\end{aligned}
\end{equation*}
Next, for simplicity of presentation, we denote by  $\D(\cd)$  the corresponding differences of  solutions.
Then, according to  Briand et al. \cite{Briand-Delyon-Hu-Pardoux-Stoica-2003}, it follows that for $1<p<2$,
\begin{equation*}
\begin{aligned}
&\ \dbE\[ |\D \mathbb{Y}^{N,i}(t,s)|^p+\(\int_s^T\sum\limits_{j=1}^N |\D  \mathbb{Z}^{N,i,j}(t,r)|^2dr\)^\frac{p}{2}\]\\
& \leq C\dbE  \( \int_s^T (g(t,r,  Y^{N,i}(r), \overline{\mathbb{Z}}^{i}(t,r), \frac{1}{N}\sum\limits_{i=1}^N \d_{Y^{N,i}}(r),
\frac{1}{N}\sum\limits_{i=1}^N \d_{\mathbb{Z}^{N,i,i}}(t,r) )\\
&\qq \qq\q  -g(t,r,  \overline{Y}^{i}(r), \overline{\mathbb{Z}}^{i}(t,r),
\cl{\mu}(r), \mathbb{P}_{\cl{\mathbb{Z}}^i(t,r)} ))dr    \)^p\\
& \leq C \(\dbE\int^T_s|\D Y^{N,i}(r)|^pdr+ \dbE\int_s^T \cW_1^p(\mu^N(r), \overline{\mu}(r))dr\)\\
&\q +C_1(T-s)^{p-1} \dbE\int_s^T\cW_1^p(\nu^N(t,r), \overline{\nu}(t,r))dr,
\end{aligned}
\end{equation*}
where  $C_1$ depends on $L,p$ and is independent of $T$.
Thanks to  the definition of the $p$-Wasserstein metric, we have that  for any $r>1$ and $\vartheta_1,\vartheta_2\in \cP_1(\dbR^d),$
\begin{equation*}
\cW_1^r(\vartheta_1,\vartheta_2)\leq \cW_r^r(\vartheta_1,\vartheta_2).
\end{equation*}
Taking $s=t$, it follows from Gronwall inequality  that
\begin{align}\label{4.77}
&\dbE |\D Y^{N,i}(t)|^p\leq C  \dbE\int_t^T \cW_p^p(\mu^N(r), \overline{\mu}(r))dr
 +C_1(T-t)^{p-1} \dbE\int_t^T\cW_p^p(\nu^N(t,r), \overline{\nu}(t,r))dr,
\end{align}
and
\begin{align}\label{4.78}
&\ \dbE\[  \(\int_t^T\sum\limits_{j=1}^N |\D  Z^{N,i,j}(t,r)|^2dr\)^\frac{p}{2}\]\nonumber\\
&\leq   C  \dbE\int_t^T \cW_p^p(\mu^N(r), \overline{\mu}(r))dr
+C \dbE\int_t^T\int_s^T \cW_p^p(\nu^N(s,r), \overline{\nu}(s,r))drds \\
&\q +C_1(T-t)^{p-1} \dbE\int_t^T\cW_p^p(\nu^N(t,r), \overline{\nu}(t,r))dr.\nonumber
\end{align}
 Integrating from $0$ to $T$ on both sides of (\ref{4.77}) and (\ref{4.78}), we get the desired result.
\end{proof}

\begin{theorem}\label{th4.2}\rm
	Let \autoref{ass3.1} be in force and $g$ is Lipschitz continuous with respect to  $(\mu,\nu)$ in 1-Wasserstein distance.
	 Let $(Y^{N,i}(\cd), Z^{N,i,j}(\cd,\cd))_{1\leq i,j\leq N}$, $(\cl{Y}^{i}(\cd),\cl{Z}^i(\cd,\cd))$ be the  unique solutions of   Eq. (\ref{1.3}) and  Eq. (\ref{5.2}), respectively.
 Then, for any $1<p<2$, there exists a positive constant  $C$  depending only on $L$ and $p$, such that
               \begin{equation*}
                   \begin{aligned}
                     \mathbb{E}\Big[\int_0^T| Y^{N,i}(t)- \cl{Y}^i(t)|^pdt
                       + \int_0^T\(\int_t^T \sum\limits_{j=1}^N|Z^{N,i,j}(t,s)- \d_{ij}\cl{Z}^i(t,s)|^2ds\)^\frac{p}{2} dt \Big]
                           \leq C \cQ(N),
                 \end{aligned}
                \end{equation*}
                where
                \begin{equation}\label{1.1111}
                \cQ(N) =
                \begin{cases}
                 N^{-(2-p)/2}            & \text{as}\q d=1,2,3,\\
                 N^{-\min\{p/d,(2-p)/2\}}            & \text{as}\q d\geq 4,
                \end{cases}
                \end{equation}
                and $d$ is  the dimension of Brownian motion.

\end{theorem}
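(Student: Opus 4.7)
The plan is to combine the Wasserstein error bound of \autoref{th4.1} with the Fournier--Guillin empirical-measure concentration estimate \cite{Fournier-Guillin-2015}. Since \autoref{th4.1} already controls the left-hand side by
\[
C\,\dbE\int_0^T \cW_p^p(\mu^N(t),\cl{\mu}(t))\,dt + C\,\dbE\int_0^T\!\int_t^T \cW_p^p(\nu^N(t,r),\cl{\nu}(t,r))\,dr\,dt,
\]
it suffices to estimate these two Wasserstein integrals at the rate $\cQ(N)$.

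For each fixed $(t,r)$, I introduce the auxiliary empirical measures $\cl{\mu}^N(t):=\frac{1}{N}\sum_{j=1}^N \d_{\cl{Y}^j(t)}$ and $\cl{\nu}^N(t,r):=\frac{1}{N}\sum_{j=1}^N \d_{\cl{Z}^j(t,r)}$ built from the limit system. A triangle inequality $\cW_p^p(\mu^N,\cl{\mu}) \leq 2^{p-1}\bigl[\cW_p^p(\mu^N,\cl{\mu}^N)+\cW_p^p(\cl{\mu}^N,\cl{\mu})\bigr]$, the explicit pairing coupling $Y^{N,j}\leftrightarrow \cl{Y}^j$ giving $\cW_p^p(\mu^N(t),\cl{\mu}^N(t))\leq \frac{1}{N}\sum_j|Y^{N,j}(t)-\cl{Y}^j(t)|^p$, and the exchangeability of the particle system together with the i.i.d.\ structure of $(\cl{Y}^j)_j$ reduce the expectation of the latter to $\dbE|Y^{N,i}(t)-\cl{Y}^i(t)|^p$ (and analogously for the $Z$-term). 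Since the limit samples $\{\cl{Y}^i(t)\}\subset\dbR$ and $\{\cl{Z}^i(t,r)\}\subset\dbR^d$ are i.i.d.\ with uniformly bounded second moments by \autoref{th3.1}, the Fournier--Guillin theorem then delivers $\dbE\cW_p^p(\cl{\mu}^N(t),\cl{\mu}(t))\leq C\cQ(N)$ and $\dbE\cW_p^p(\cl{\nu}^N(t,r),\cl{\nu}(t,r))\leq C\cQ(N)$, in agreement with \eqref{1.1111}. Because the LHS of the theorem controls only the aggregate $L^2$-sum $\sum_j|Z^{N,i,j}-\d_{ij}\cl{Z}^i|^2$ rather than the scalar $L^p$-norm of $|Z^{N,i,i}-\cl{Z}^i|$, I invoke H\"{o}lder's inequality with exponent $2/p>1$ to get
\[
\int_t^T |Z^{N,i,i}(t,r)-\cl{Z}^i(t,r)|^p\,dr \leq (T-t)^{1-p/2}\Big(\int_t^T \sum_{j=1}^N|Z^{N,i,j}(t,r)-\d_{ij}\cl{Z}^i(t,r)|^2\,dr\Big)^{p/2},
\]
which brings the $Z$-contribution back to the quantity appearing on the LHS of the theorem.

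Substituting these ingredients into the pointwise estimates \eqref{4.77}--\eqref{4.78} from the proof of \autoref{th4.1} produces, with $\Phi(t):=\dbE|Y^{N,i}(t)-\cl{Y}^i(t)|^p$ and $\Psi(t):=\dbE\bigl(\int_t^T \sum_j|Z^{N,i,j}(t,r)-\d_{ij}\cl{Z}^i(t,r)|^2\,dr\bigr)^{p/2}$, a self-referential inequality of the form
\[
\Phi(t)+\Psi(t) \leq C_1\int_t^T \Phi(r)\,dr + C_1\int_t^T (T-s)^{1-p/2}\Psi(s)\,ds + C_2(T-t)^{p/2}\Psi(t) + C_3\cQ(N).
\]
The decisive obstacle is the $(T-t)^{p/2}\Psi(t)$ term on the right, which cannot be absorbed globally since $C_2$ need not be small. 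My plan is to localize to a terminal window $[T-\eta,T]$ on which $C_2(T-t)^{p/2}\leq 1/2$, absorb this term into $\Psi(t)$, and then apply a generalized Gronwall lemma with the weakly-singular kernel $(T-s)^{1-p/2}$ to conclude $\Phi(t)+\Psi(t)\leq C'\cQ(N)$ throughout $[T-\eta,T]$. A standard backward iteration over finitely many subintervals of length $\eta$ extends this pointwise bound to $[0,T]$, and integrating in $t$ yields the theorem.
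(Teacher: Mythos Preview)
Your proposal is correct and follows essentially the same route as the paper: introduce the empirical measures $\cl{\mu}^N,\cl{\nu}^N$ of the limit system (the paper calls these $\widetilde{\mu}^N,\widetilde{\nu}^N$), split via the triangle inequality, bound the empirical-to-law piece by Fournier--Guillin, bound the particle-to-copy piece by the pairing coupling, feed this back into the pointwise estimates \eqref{4.77}--\eqref{4.78}, localize to a short terminal window to absorb the self-referential term, apply Gronwall, and iterate backward.

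Two cosmetic remarks. First, the paper carries out the absorption at the level of the quantity $\dbE\cW_p^p(\mu^N(t),\cl{\mu}(t))+\dbE\int_t^T\cW_p^p(\nu^N(t,s),\cl{\nu}(t,s))\,ds$, where the small coefficient is $C_1(T-t)^{p-1}$; you instead pass first to $\Psi(t)$ via H\"older and absorb $C_2(T-t)^{p/2}$. Both work and give the same localization scheme. Second, your appeal to a ``generalized Gronwall with weakly-singular kernel'' is overkill: on each subinterval $[T-(k+1)\eta,T-k\eta]$ the kernel $(T-s)^{1-p/2}$ is bounded (since $1-p/2>0$), so the ordinary Gronwall inequality already suffices, exactly as in the paper.
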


\begin{proof}
              For  $i=1,2,\cd\cd\cd, N$,
              let $(\widetilde{Y}^{N,i}(\cd),\widetilde{Z}^{N,i}(\cd,\cd))$  be i.i.d. copies of  $(\overline{Y}^i(\cd),\overline{Z}^i(\cd,\cd))$
                    such that
                   \begin{equation}\label{4.15111}
                   \widetilde{Y}^{N,i}(t)=\psi^i(t)+\int_t^Tg(t, s,\widetilde{Y}^{N,i}(s),\widetilde{Z}^{N,i}(t,s), \bar{\mu}(s),\bar{\nu}(t,s))ds-\int_t^T\widetilde{Z}^{N,i}(t,s)dW^i(s),\q  t\in[0,T],
                   \end{equation}
                   where $\overline{\mu}(\cd) $ is given in (\ref{5.3-11}).
        Then
        \begin{align*}
        Y^{N,i}(t)-\widetilde{Y}^{N,i}(t)&=\int_t^T\( g(t,s,Y^{N,i}(s),Z^{N,i,i}(t,s),  \mu^N (s), \nu^N(t,s) )\\
                                   &\qq\qq - g(t,s,\widetilde{Y}^{N,i}(s),\widetilde{Z}^{N,i}(t,s), \cl{\mu}(s),\cl{\nu}(t,s) )\)ds\\
                                     &\q  -\int_t^T\sum\limits_{j=1}^N(Z^{N,i,j}(t,s)-\d_{ij}\widetilde{Z}^{N,i}(t,s))dW^j(s).
        \end{align*}
        Similar to (\ref{4.77}) and (\ref{4.78}), we have that for $1<p<2$ and $t\in[0,T]$,
        \begin{align*}
        &\ \mathbb{E}\bigg[ |  Y^{N,i}(t)- \widetilde{Y}^{N,i}(t)|^p
			+ \(\int_t^T \sum\limits_{j=1}^N|Z^{N,i,j}(t,s)- \d_{ij}\widetilde{Z}^{N,i}(t,s)|^2ds\)^\frac{p}{2}  \bigg] \nn\\
			&   \leq C  \dbE\int_t^T \cW_p^p(\mu^N(r), \overline{\mu}(r))dr
                +C \dbE\int_t^T\int_s^T \cW_p^p(\nu^N(s,r), \overline{\nu}(s,r))drds \\
            &   \q +C_1(T-t)^{p-1}  \dbE\int_t^T\cW_p^p(\nu^N(t,s), \overline{\nu}(t,s))ds,
        \end{align*}
         where $C_1$ is a constant depending only on $L$ and $p$, independent of $T$. Consequently,
        \begin{align*}
        &\ \dbE\cW^p_p(\mu^N(t), \widetilde {\mu}^N(t)) \leq \dbE\[\frac{1}{N}\sum\limits_{i=1}^N\Big|Y^{N,i}(t) -\widetilde{Y}^{N,i}(t) \Big|^p   \]\\
         &  \leq C  \dbE\int_t^T \cW_p^p(\mu^N(r), \overline{\mu}(r))dr
                +C \dbE\int_t^T\int_s^T \cW_p^p(\nu^N(s,r), \overline{\nu}(s,r))drds \\
         &  \q +C_1(T-t)^{p-1}   \dbE\int_t^T\cW_p^p(\nu^N(t,s), \overline{\nu}(t,s))ds,
        \end{align*}
        and
        \begin{align*}
        &\ \dbE\int_t^T\cW^p_p(\nu^N(t,s), \widetilde {\nu}^N(t,s))ds
        \leq \dbE\int_t^T\frac{1}{N}\sum\limits_{i=1}^N\Big|Z^{N,i,i}(t,s)-\widetilde{Z}^{N,i}(t,s)\Big|^p ds  \]\\
        &\leq \frac{1}{N}\sum\limits_{i=1}^N\dbE\(\int_t^T\Big|Z^{N,i,i}(t,s)-\widetilde{Z}^{N,i}(t,s)\Big|^2 ds\)^\frac{p}{2} \\
         &  \leq C  \dbE\int_t^T \cW_p^p(\mu^N(r), \overline{\mu}(r))dr
                +C \dbE\int_t^T\int_s^T \cW_p^p(\nu^N(s,r), \overline{\nu}(s,r))drds \\
         &  \q +C_1(T-t)^{p-1}   \dbE\int_t^T\cW_p^p(\nu^N(t,s), \overline{\nu}(t,s))ds.
        \end{align*}
        Here
        \begin{align}\label{6.2}
        \widetilde {\mu}^N(t)\deq\frac{1}{N}\sum\limits_{i=1}^N\delta_{\widetilde{Y}^{N,i}(t)}\q\hbox{and}
        \q \widetilde {\nu}^N(t,s)\deq\frac{1}{N}\sum\limits_{i=1}^N\delta_{\widetilde{Z}^{N,i}(t,s)}.
        \end{align}
        Then, it follows from the triangle inequality that
        \begin{align*}
           &\ \dbE\cW^p_p(\mu^N(t), \overline{\mu} (t))+ \dbE\int_t^T\cW^p_p(\nu^N(t,s), \overline {\nu} (t,s))ds \\
           &\leq   \dbE\cW^p_p(\mu^N(t), \widetilde {\mu}^N(t)) +\dbE\cW^p_p(\widetilde{\mu}^N(t), \overline{\mu}(t))\\
           &\q+\dbE\int_t^T\cW^p_p(\nu^N(t,s), \widetilde {\nu}^N(t,s))ds+ \dbE\int_t^T\cW^p_p(\widetilde{\nu}^N(t,s),  \overline{\nu}(t,s))ds   \\
            &\leq \dbE\cW^p_p(\widetilde{\mu}^N(t), \overline{\mu}(t))+ \dbE\int_t^T\cW^p_p(\widetilde{\nu}^N(t,s),  \overline{\nu}(t,s))ds\\
            & \q +C  \dbE\int_t^T \cW_p^p(\mu^N(r), \overline{\mu}(r))dr
                +C \dbE\int_t^T\int_s^T \cW_p^p(\nu^N(s,r), \overline{\nu}(s,r))drds \\
         &  \q +C_1(T-t)^{p-1}   \dbE\int_t^T\cW_p^p(\nu^N(t,s), \overline{\nu}(t,s))ds.
        \end{align*}
        Taking a proper $\e^*$ such that $C_1(\e^*)^{p-1}=\frac{1}{2}$. Then, for any $t\in[T-\e^*,T]$, it is easy to see that
        $C_1(T-t)^{p-1}\leq \frac{1}{2}.$  Thereby,  Gronwall's inequality provides
        \begin{align*}
         &\ \dbE\cW^p_p(\mu^N(t), \overline{\mu} (t))+ \dbE\int_t^T\cW^p_p(\nu^N(t,s), \overline {\nu} (t,s))ds \\
          &\leq C \bigg\{\dbE\cW^p_p(\widetilde{\mu}^N(t), \overline{\mu}(t))+ \dbE\int_t^T\cW^p_p(\widetilde{\nu}^N(t,s),  \overline{\nu}(t,s))ds\bigg\}.
        \end{align*}
       Hence,
       \begin{align*}
         &\dbE\int_{T-\e^*}^T\cW^p_p(\mu^N(t), \overline{\mu} (t))dt+ \dbE\int_{T-\e^*}^T\int_t^T\cW^p_p(\nu^N(t,s), \overline {\nu} (t,s))dsdt \\
          &\leq C \(\dbE\int_{T-\e^*}^T\cW^p_p(\widetilde{\mu}^N(t), \overline{\mu}(t))dt+ \dbE\int_{T-\e^*}^T\int_t^T\cW^p_p(\widetilde{\nu}^N(t,s),  \overline{\nu}(t,s))dsdt\).
        \end{align*}
        According to Fournier and Guillin \cite[Theorem 1]{Fournier-Guillin-2015}, we have
        \begin{align}\label{4.009}
        &\dbE\int_{T-\e^*}^T\cW^p_p(\widetilde{\mu}^N(t), \overline{\mu}(t))dt
         \leq\cQ(N)\int_{T-\e^*}^T \{\dbE|\overline{Y}^{i}(t)|^2\}^\frac{p}{2}dt
        \leq\cQ(N)\int_{T-\e^*}^T (1+\dbE|\overline{Y}^{i}(t)|^2)dt<\i,
        \end{align}
         and
         \begin{align}
         &\ \dbE\int_{T-\e^*}^T\int_t^T\cW^p_p(\nu^N(t,s), \overline {\nu} (t,s))dsdt \nonumber\\
         &\leq\cQ(N)\int_{T-\e^*}^T\int_t^T\{\dbE|\overline{Z}^{i}(t,s)|^2\}^\frac{p}{2}dsdt
        \leq\cQ(N)\int_{T-\e^*}^T\int_t^T(1+\dbE|\overline{Z}^{i}(t,s)|^2) dsdt<\i. \label{4.100}
        \end{align}
       Hence,
       \begin{align*}
         &\dbE\int_{T-\e^*}^T\cW^p_p(\mu^N(t), \overline{\mu} (t))+ \dbE\int_{T-\e^*}^T\int_t^T\cW^p_p(\nu^N(t,s), \overline {\nu} (t,s))dsdt
         \leq C \cQ(N).
        \end{align*}

        Next, let us consider the time terminal $[T-2\e^*, T-\e^*]$.  On the one hand, making a similar analyses, one gets
        \begin{align*}
         &\dbE\int_{T-2\e^*}^{T-\e^*}\cW^p_p(\mu^N(t), \overline{\mu} (t))dt \leq C \cQ(N).
        \end{align*}
       On the other hand, we have, for $t\in[T- 2\e^*, T- \e^*]$,
        \begin{align*}
        &\ \dbE\int_{T-2\e^*}^{T-\e^*}\int_{t}^T\cW^p_p(\nu^N(t,s), \overline {\nu} (t,s))dsdt
          \leq\cQ(N)\int_{T-2\e^*}^{T-\e^*}\int_t^T\{\dbE|\overline{Z}^{i}(t,s)|^2\}^\frac{p}{2}dsdt\\
        & \leq\cQ(N)  \dbE\int_{T-2\e^*}^{T}\int_t^T(1+|\overline{Z}^{i}(t,s)|^2) dsdt  \leq C \cQ(N).
        \end{align*}
       By repeating the above process a finite number of times, we finally obtain
  \begin{equation*}
                   \begin{aligned}
                     \mathbb{E}\Big[\int_0^T| Y^{N,i}(t)- \cl{Y}^i(t)|^pdt
                       + \int_0^T\(\int_t^T \sum\limits_{j=1}^N|Z^{N,i,j}(t,s)- \d_{ij}\cl{Z}^i(t,s)|^2ds\)^\frac{p}{2} dt \Big]
                           \leq C \cQ(N).
                 \end{aligned}
                \end{equation*}
                This completes the proof.
\end{proof}

\begin{remark}\label{re4.33} \rm 
   From the proof of \autoref{th4.2}, we know that when $g$ depends on the law of $Z(\cd,\cd)$, we should work within the space
$L^p_{\dbF}(\D [0,T];\dbR^d)$ with $1<p<2$, because we only have that  $\int_{0}^T\int_t^T\dbE|\overline{Z}^{i}(t,s)|^2dsdt<\i$  (see (\ref{4.100})).
\end{remark}

 If the generator $g$ is independent of the law of $Z(\cd,\cd)$, we have a better convergence rate. More precisely, it is not necessary to restrict $p$ to   the interval  $(1, 2)$. Indeed, for any $p > 1$, we have the following result concerning the convergence rate.

\begin{proposition}\label{pro4.44} \rm
Let \autoref{ass3.1} hold. Assume that $g$ is independent of the law of $Z(\cd,\cd)$ and $g$ is Lipschitz continuous with respect to $\mu$ in 1-Wasserstein distance. Furthermore, suppose that for $p>1$, there exists a $q>p$ such that
 the free term $\psi(\cdot) \in L^q_{\mathcal{F}_T}([0,T])$ and  $  g(t,s,0,0,\delta_0,\delta_0)\in L^q_{\mathbb{F}}(\Delta [0,T])$.
Then,  there exists a positive constant  $C$ depending on the Lipschitz constant of $g$ and $p$ such that
               \begin{equation*}
                   \begin{aligned}
                     \mathbb{E}\Big[\int_0^T| Y^{N,i}(t)- \cl{Y}^i(t)|^pdt
                       + \int_0^T\(\int_t^T \sum\limits_{j=1}^N|Z^{N,i,j}(t,s)- \d_{ij}\cl{Z}^i(t,s)|^2ds\)^\frac{p}{2} dt \Big]
                           \leq C \cQ(N),
                 \end{aligned}
                \end{equation*}
        where  $(Y^{N,i}(\cd), Z^{N,i,j}(\cd,\cd))_{1\leq i,j\leq N}$, $(\cl{Y}^{i}(\cd),\cl{Z}^i(\cd,\cd))$ is the  solutions of   (\ref{1.3}) and  (\ref{5.2}), respectively.
\end{proposition}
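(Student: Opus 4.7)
The plan is to adapt the proof of \autoref{th4.2}, exploiting the fact that the absence of $\dbP_{Z(t,s)}$ in $g$ removes the $\cW_p^p(\nu^N,\bar{\nu})$ term from every intermediate estimate. Recall that the restriction $1<p<2$ there arose precisely from (\ref{4.100}), where the Fournier--Guillin bound on $\dbE\cW_p^p(\nu^N,\bar{\nu})$ was controlled only by $\{\dbE|\bar{Z}^i|^2\}^{p/2}$, forcing $p\leq 2$. Here no such bound on $\nu$ is needed, so only moments of $\bar{Y}^i$ matter; the strengthened hypothesis $\psi^i\in L^q_{\sF_T}[0,T]$ and $g(\cd,\cd,0,0,\d_0,\d_0)\in L^q_\dbF(\D[0,T])$ for some $q>p$ will supply the required $L^q$-bound on $\bar{Y}^i$, and Fournier--Guillin will then deliver the rate $\cQ(N)$ for every $p>1$.

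First, I would introduce i.i.d. copies $(\widetilde{Y}^{N,i},\widetilde{Z}^{N,i})$ of $(\bar{Y}^i,\bar{Z}^i)$ driven by $W^i$, exactly as in (\ref{4.15111}), and let $\widetilde{\mu}^N$ be the corresponding empirical measure. Applying the $L^p$-estimate of Briand et al. \cite{Briand-Delyon-Hu-Pardoux-Stoica-2003} to the BSVIE satisfied by the difference---whose driver now involves only $\cW_1(\mu^N,\bar{\mu})\leq \cW_p(\mu^N,\bar{\mu})$---gives
\begin{equation*}
\dbE\bigg[|Y^{N,i}(t)-\widetilde{Y}^{N,i}(t)|^p+\bigg(\int_t^T\sum_{j=1}^N|Z^{N,i,j}(t,s)-\d_{ij}\widetilde{Z}^{N,i}(t,s)|^2 ds\bigg)^{p/2}\bigg]\leq C\,\dbE\int_t^T\cW_p^p(\mu^N(s),\bar{\mu}(s))\,ds.
\end{equation*}
Averaging over $i$ and applying the triangle inequality yields $\dbE\cW_p^p(\mu^N(t),\bar{\mu}(t))\leq C\,\dbE\cW_p^p(\widetilde{\mu}^N(t),\bar{\mu}(t))+C\,\dbE\int_t^T\cW_p^p(\mu^N(s),\bar{\mu}(s))\,ds$, and Gronwall's inequality reduces the task to bounding $\dbE\int_0^T\cW_p^p(\widetilde{\mu}^N(t),\bar{\mu}(t))\,dt$.

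The central new ingredient is the $L^q$-moment bound $\sup_{t\in[0,T]}\dbE|\bar{Y}^i(t)|^q<\i$. I would establish it by revisiting \autoref{th3.1} in the $L^q$-setting: parameterize in $t$ via \autoref{pro3.1} and run the contraction argument in the space $L^q_\dbF[0,T]\ts L^{2,q}_\dbF(\D[0,T];\dbR^d)$ under an appropriate exponential weight, invoking the $L^q$-theory of Lipschitz mean-field BSDEs. With this bound in hand, the Fournier--Guillin theorem (as applied in (\ref{4.009})) with moment order $q>p$ gives
\begin{equation*}
\dbE\cW_p^p(\widetilde{\mu}^N(t),\bar{\mu}(t))\leq C\,\cQ(N)\,\{1+\dbE|\bar{Y}^i(t)|^q\}^{p/q},
\end{equation*}
whose time integral is finite. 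Combining this with the reduction above yields the rate for the $Y$-component; the $Z$-component then follows from the same $L^p$-estimate, iterated on subintervals as in the proof of \autoref{th4.2}.

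The principal obstacle is the $L^q$-well-posedness of the mean-field BSVIE when $q>2$, since \autoref{th3.1} is stated only in $L^2$. The key point is that the $\cW_1$-Lipschitz hypothesis on $g$ assumed in the proposition gives $|g(\cd,\dbP_y)-g(\cd,\dbP_{\bar y})|\leq L\,\cW_1(\dbP_y,\dbP_{\bar y})\leq L\,\{\dbE|y-\bar y|^q\}^{1/q}$, which is exactly what is needed for the contraction to close in $L^q_\dbF$; the $Z$-component is then controlled by the $L^q$-version of the Burkholder--Davis--Gundy inequality. Once this $L^q$-well-posedness is in place, the remainder of the argument is a routine adaptation of \autoref{th4.2} with all $\nu$-terms deleted.
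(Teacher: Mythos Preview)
Your proposal is correct and follows essentially the same route as the paper: drop the $\nu$-terms since $g$ is independent of $\dbP_{Z}$, use the $L^q$-integrability hypotheses to obtain $\sup_{t\in[0,T]}\dbE|\bar Y^i(t)|^q<\infty$, and then apply Fournier--Guillin with a moment of order $q>p$ in the analogue of (\ref{4.009}). The paper's proof is in fact only a two-sentence sketch of exactly these points; your write-up simply fills in more of the details (notably the $L^q$-well-posedness step, which the paper asserts without argument).
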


\begin{proof}
First, since $g$ is independent of the law of $Z(\cd,\cd)$, we do not need to estimate the term
$\dbE\int_0^T\int_s^T \cW_p^p(\nu^N(s,r), \overline{\nu}(s,r))drds $. Second,
the assumption that  $\psi(\cdot) \in L^q_{\mathcal{F}_T}([0,T])$ and  $g(t,s,0,0,\delta_0,\delta_0)\in L^q_{\mathbb{F}}(\Delta [0,T])$
allows to show $\sup\limits_{t\in[0,T]}\dbE|\overline{Y}^{i}(t)|^q<\i$.  Consequently, making a calculation similar to (\ref{4.009}) yields the desired result.
\end{proof}

\subsection{Quadratic growth case}

{This subsection is dedicated to studying the convergence and its rate for the particle system (\ref{1.3}) in the case where $g$ is independent of the law of $Z(\cd,\cd)$ and $g$ exhibits quadratic growth in $z$.}


\begin{proposition}\label{pro4.4}\rm
Let \autoref{ass4.1-1} hold. Assume  $g$ is independent of the law of $Z(\cd,\cd)$ and  $\max\limits_{1\leq i\leq N}\|\psi^i(\cd)\|_{L^\i_{\sF_T}[0,T]}\leq K_1$. Then there exist two positive constants $\overline{C}$ and $\overline{\overline{C}}$, depending only on $(\b,\b_0, K_1, K_2, T, \gamma)$, such that the unique adapted solution $(Y^{N,i}(\cdot), Z^{N,i,j}(\cdot, \cdot))_{1\leq i,j\leq N}\in L^\i_{\dbF}([0,T];\dbR^N)\ts  \cZ^2_{\dbF}(\D[0,T];(\dbR^{N\ts d})^N)$
 for the particle system described in Eq. \rf{1.3} satisfies the following estimate: for $i,j=1,...,N$,
              \begin{equation*}
                 ||Y^{N,i}(\cd)||_{L_\dbF^\infty[0,T]}\leq \overline{C}\q\hbox{and}\q
                 ||Z^{N,i,j}(\cd,\cd)||_{ \cZ^2_\dbF(\D[0,T])}\leq \overline{\overline{C}}.
              \end{equation*}
\end{proposition}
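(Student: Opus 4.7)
The plan is to mimic the proof of \autoref{th3.4} while handling the $N$-fold coupling introduced by the empirical measure $\mu^N(s)=\frac{1}{N}\sum_i\d_{Y^{N,i}(s)}$. The key simplification in the present setting is that $g$ is independent of $\dbP_{Z(t,s)}$, so the cross terms $Z^{N,i,j}$ with $j\neq i$ appear only in the martingale part of the equation (and hence only as positive quadratic-variation contributions in any It\^o expansion), while the mean-field coupling in the $y$-argument can be closed by the elementary inequality $\cW_2(\mu^N(s),\d_0)^2=\frac{1}{N}\sum_j|Y^{N,j}(s)|^2\leq\max_{1\leq j\leq N}\|Y^{N,j}\|_{L^\i_\dbF[s,T]}^2$. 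First, I would $t$-parameterize as in \autoref{pro3.1}: for each $t\in[0,T]$ let $(\cY^{N,i}(t,\cd),\cZ^{N,i,j}(t,\cd))_{j=1}^N$ solve the quadratic BSDE
\begin{equation*}
\cY^{N,i}(t,s)=\psi^i(t)+\int_s^T g\bigl(t,r,Y^{N,i}(r),\cZ^{N,i,i}(t,r),\mu^N(r)\bigr)dr-\int_s^T\sum_{j=1}^N\cZ^{N,i,j}(t,r)dW^j(r),
\end{equation*}
so that $Y^{N,i}(t)=\cY^{N,i}(t,t)$ and $Z^{N,i,j}(t,s)=\cZ^{N,i,j}(t,s)$ on $\D[0,T]$.

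For the $L^\i$-bound on $Y^{N,i}$, I would decompose the drift as $g(t,r,Y^{N,i},\cZ^{N,i,i},\mu^N)=g(t,r,0,0,\d_0)+\widetilde{L}^i(t,r)\cZ^{N,i,i}(t,r)+[g(t,r,Y^{N,i},\cZ^{N,i,i},\mu^N)-g(t,r,0,\cZ^{N,i,i},\d_0)]$, where $|\widetilde{L}^i(t,r)|\leq\phi(0)(1+|\cZ^{N,i,i}(t,r)|)$ by \autoref{ass4.1-1}(ii). Under the measure $d\widetilde{\dbP}^{t,i}=\cE(\widetilde{L}^i(t,\cd)\cd W^i)_t^T d\dbP$, Girsanov's theorem makes $\widetilde{W}^i(r;t)=W^i(r)-\int_t^r\widetilde{L}^i(t,u)du$ a Brownian motion and leaves the other $W^j$ unchanged. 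Applying It\^o's formula to $e^{\th s}|\cY^{N,i}(t,s)|^2$ with $\th=\b^2+\b_0^2+1$ under $\widetilde{\dbP}^{t,i}$, exactly as in \eqref{3.20}, and using the above domination of $\cW_2(\mu^N(r),\d_0)$, then setting $s=t$ yields
\begin{equation*}
|Y^{N,i}(t)|^2\leq e^{\th T}(K_1^2+K_2)+2e^{\th T}\int_t^T\max_{1\leq j\leq N}\|Y^{N,j}\|_{L^\i_\dbF[r,T]}^2\,dr.
\end{equation*}
Since the right-hand side is independent of $i$, taking $\max_i$ and applying Gronwall's inequality gives $\max_i\|Y^{N,i}\|_{L^\i_\dbF[0,T]}\leq\overline{C}$ depending only on $(\b,\b_0,K_1,K_2,T,\g)$ but not on $N$.

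For the $\cZ^2$-bound, I would apply It\^o to $\Xi(\cY^{N,i}(t,\cd))$ with $\Xi$ as in \eqref{3.9-111}. The bound $|g|\leq\frac{\g}{2}|\cZ^{N,i,i}|^2+\ell+\b|Y^{N,i}|+\b_0\cW_2(\mu^N,\d_0)$ together with $|\cZ^{N,i,i}|^2\leq\sum_{j=1}^N|\cZ^{N,i,j}|^2$ and the identity $\Xi''-\g|\Xi'|=1$ allows the quadratic term to be absorbed on the left-hand side exactly as in \eqref{3.9-112}, producing
\begin{equation*}
\dbE_\t\int_\t^T\sum_{j=1}^N|Z^{N,i,j}(t,r)|^2dr\leq 2\Xi(K_1)+2|\Xi'(\overline{C})|\bigl[\sqrt{K_2T}+(\b+\b_0)\overline{C}T\bigr]=:\overline{\overline{C}}^{\,2},
\end{equation*}
uniformly in $i,j,t$ and $\t\in\sT[t,T]$, which is precisely the required $\cZ^2_\dbF(\D[0,T])$ estimate.

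The main technical obstacle I anticipate lies in justifying, \emph{uniformly in $N$}, that $\cE(\widetilde{L}^i(t,\cd)\cd W^i)$ in Step 2 is a true martingale, since $\widetilde{L}^i$ involves $\cZ^{N,i,i}$ which a priori could blow up with $N$. I would handle this by first establishing an a priori $\text{BMO}_\dbP$ bound on $\cZ^{N,i,i}(t,\cd)\cd W^i$ depending only on $(K_1,K_2,\g)$ via the exponential-transform argument of Step 3 applied to a truncation of the generator, then passing to the limit; this in turn guarantees the Girsanov change of measure is valid and the constants $\overline{C},\overline{\overline{C}}$ are independent of $N$. The hypothesis that $g$ is independent of $\dbP_{Z(t,s)}$ is indispensable throughout, as it reduces the whole analysis to per-equation quadratic BSDE estimates coupled only through $\mu^N$, which is itself controlled by the $L^\i$-norm of the $Y^{N,j}$; otherwise a term $\cW_2(\nu^N(t,\cd),\d_0)$ would appear on the right and could not be dominated.
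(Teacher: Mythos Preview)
Your proposal is correct and matches the paper's approach for the $N$-independent bounds: $t$-parameterization, Girsanov plus It\^o on $e^{\th s}|\cY^{N,i}(t,s)|^2$ for the $L^\i$ bound (exactly the computation \eqref{3.20}, with $\cW_2(\mu^N(r),\d_0)$ dominated by $\max_j\|Y^{N,j}\|_{L^\i_\dbF[r,T]}$), then It\^o on $\Xi(\cY^{N,i}(t,\cd))$ for the $\cZ^2$ bound.

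The one place the paper differs is in resolving the technical obstacle you flag. Rather than a truncation-plus-limit argument, the paper first establishes well-posedness of the $N$-particle system by a contraction on a ball $\sB_{N,\e}(\widehat{R}_1,\widehat{R}_2)$ with $\widehat{R}_1,\widehat{R}_2$ \emph{allowed to depend on $N$}; this already yields $\cZ^{N,i,i}(t,\cd)\in\cZ^2_\dbF[t,T]$ for each fixed $N$, so $\cE(\widetilde{L}^i(t,\cd)\cd W^i)$ is automatically a uniformly integrable martingale. Only after existence is secured does the paper return and run the Girsanov/It\^o argument to extract constants $\overline{C},\overline{\overline{C}}$ independent of $N$. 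This two-pass strategy is lighter than your proposed truncation and is worth adopting.
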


We postpone its proof to the appendix for now.


\begin{theorem}\label{th 5.1111}\rm
Let \autoref{ass4.1-1} hold  and let $g$ be independent of  the law of $Z(\cd,\cd)$. Assume $\max\limits_{1\leq i\leq N}\|\psi^i(\cd)\|_{L^\i_{\sF_T}[0,T]}\leq K_1$. Then for any  $p\geq2$, there exist two  constants $l^*_0$ and $l^*_1$ with $\min\{l^*_0,l^*_1\}>1$, and a positive constant $C$, depending only on $(\b,\b_0,K_1,K_2,T,\phi(\cd),\g, l^*_0, l^*_1,p)$, such that for almost all $t\in[0,T]$ and for  $i=1,\cds,N,$
	\begin{equation}
		\begin{aligned}
			&\ \mathbb{E}\bigg[| Y^{N,i}(t)- \cl{Y}^i(t)|^p
			+\(\int_t^T \sum\limits_{j=1}^N|Z^{N,i,j}(t,s)- \d_{ij}\cl{Z}^i(t,s)|^2ds\)^\frac{p}{2} \bigg] \nn\\
			&\leq   C\bigg\{\dbE\[
 \int_t^T  \cW_2^{p(l_0^*l_1^*)^2}(\mu^N(s),\overline{\mu}(s))ds\]\bigg\}^{\frac{1}{(l_0^*l_1^*)^2}}.
		\end{aligned}
	\end{equation}
\end{theorem}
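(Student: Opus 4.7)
The plan is to combine the $t$-parameterization of Proposition~\ref{pro3.1} with a Girsanov change of measure of BMO type, as in the proof of Theorem~\ref{th3.4}, and then to transfer the resulting bound back to $\dbP$ via the reverse H\"older inequality (\ref{2.2}). First, I would apply Proposition~\ref{pro3.1} to both the particle system (\ref{1.3}) and to the mean-field limit (\ref{5.2}) to obtain, for each fixed $t$, BSDE pairs $(\mathbb{Y}^{N,i}(t,\cd),\mathbb{Z}^{N,i,\cd}(t,\cd))$ and $(\overline{\mathbb{Y}}^{i}(t,\cd),\overline{\mathbb{Z}}^{i}(t,\cd))$ on $[t,T]$, with $Y^{N,i}(t)=\mathbb{Y}^{N,i}(t,t)$, $\bar Y^{i}(t)=\overline{\mathbb{Y}}^{i}(t,t)$ and analogous identifications for $Z$. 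The difference $\D\mathbb{Y}:=\mathbb{Y}^{N,i}-\overline{\mathbb{Y}}^{i}$, $\D\mathbb{Z}^{N,i,j}:=\mathbb{Z}^{N,i,j}-\d_{ij}\overline{\mathbb{Z}}^{i}$ satisfies a BSDE with zero terminal value; since $g$ is independent of the law of $Z$, Assumption~\ref{ass4.1-1}(ii) decomposes its driver as
\[
 L(t,r)\,\D\mathbb{Z}^{N,i,i}(t,r)+\widetilde g(r),\q |L(t,r)|\les\phi(\cd)\big(1+|Z^{N,i,i}|+|\bar Z^{i}|\big),\q |\widetilde g(r)|\les\b|\D Y(r)|+\b_0\cW_2(\mu^N(r),\bar\mu(r)).
\]

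Second, because Proposition~\ref{pro4.4} and Theorem~\ref{th3.4} bound $\|Z^{N,i,i}\|_{\cZ^2_\dbF(\D[0,T])}$ and $\|\bar Z^{i}\|_{\cZ^2_\dbF(\D[0,T])}$ uniformly in $N$, the process $L(t,\cd)\cd W^i$ is a BMO martingale on $[t,T]$ with norm bounded by a constant independent of $(N,i,t)$. Girsanov's theorem then makes $\widetilde W^i(\cd;t):=W^i(\cd)-\int_t^{\cd}L(t,u)du$ a Brownian motion under $d\widetilde{\dbP}^t:=\cE(L(t,\cd)\cd W^i)_t^T d\dbP$, while the other $W^j$'s remain Brownian motions, and the BSDE for $\D\mathbb{Y}$ becomes linear with Lipschitz coefficient $\b$ in $\D Y$ and source term $\b_0\cW_2(\mu^N,\bar\mu)$. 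Classical $L^{p}$-estimates (in the vein of \cite{Briand-Delyon-Hu-Pardoux-Stoica-2003}) applied under $\widetilde{\dbP}^t$ then yield
\[
 \widetilde{\dbE}^t_t\bigg[|\D Y(t)|^{p}+\Big(\int_t^T\sum_{j=1}^{N}|\D\mathbb{Z}^{N,i,j}(t,r)|^2dr\Big)^{p/2}\bigg] \les C\,\widetilde{\dbE}^t_t\bigg[\int_t^T\big(|\D Y(r)|^{p}+\cW_2^{p}(\mu^N(r),\bar\mu(r))\big)dr\bigg].
\]

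Third, the uniform BMO bound furnishes reverse-H\"older exponents $l_0^*,l_1^*>1$ for which both $\cE(L\cd W^i)$ and its inverse density satisfy (\ref{2.2}). A first H\"older inequality with exponent $l_0^*$ transfers the $\widetilde{\dbE}^t$-estimate back to $\dbE$ at the price of raising $|\D Y(t)|^{p}$ to $|\D Y(t)|^{pl_0^*}$, and a second H\"older inequality with exponent $l_1^*$ handles the passage inside the $r$-integral, producing the $p(l_0^*l_1^*)^2$-th moment of $\cW_2(\mu^N(r),\bar\mu(r))$ on the right. The $L^\i$ bounds from Proposition~\ref{pro4.4} and Theorem~\ref{th3.4} allow absorbing every excess power of $|\D Y|$ into a constant multiple of $|\D Y|^{p}$; a backward Gronwall in $t$ then eliminates the $\D Y$-integral on the right and delivers the announced bound for the $Y$-component, and the same reverse-H\"older procedure applied to the full BSDE estimate controls the $Z$-component.

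The chief obstacle is ensuring that the BMO norm of $L\cd W^i$ stays below the threshold $\F(l_0^{*}\vee l_1^{*})$ required by (\ref{2.2}) \emph{uniformly in $N$}; this is precisely the role of Proposition~\ref{pro4.4}, which supplies a BMO bound on the particle-system $Z$'s that does not degrade with population size. A secondary technical point is the careful bookkeeping of H\"older exponents so that the composite $(l_0^*l_1^*)^2$ is simultaneously compatible with the reverse-H\"older constraint on the Girsanov density, the $L^\i$-absorption of excess $|\D Y|$-powers, and the Gronwall-type closure of the $|\D Y|$-loop.
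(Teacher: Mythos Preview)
Your overall plan coincides with the paper's: $t$-parameterization via Proposition~\ref{pro3.1}, a BMO--Girsanov change of measure absorbing the quadratic increment in $z$, reverse H\"older (\ref{2.2}) to pass between $\widehat{\dbP}^t$ and $\dbP$, and the observation that the uniform-in-$N$ BMO bound from Proposition~\ref{pro4.4} is the key ingredient. Two points in your bookkeeping deviate from the paper and would not close as written.

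First, the paper does \emph{not} use the $L^\i$ bounds on $Y^{N,i},\bar Y^i$ to ``absorb excess powers of $|\D Y|$ back to $|\D Y|^p$'' before Gronwall. Doing so leaves a sublinear inequality of the form $\dbE|\D Y(t)|^p\les C\{\int_t^T\dbE|\D Y(s)|^p\,ds+\cdots\}^{1/l_0^*}$, which is a \emph{nonlinear} Gronwall that does not yield the stated bound. Instead, after the first reverse H\"older step one has the pointwise estimate $|\D Y(t)|^p\les C\{\dbE_t\int_t^T(|\D Y|^{pl_0^*}+\cW_2^{pl_0^*})ds\}^{1/l_0^*}$; raising both sides to the $l_0^*$-th power and taking $\dbE$ produces a \emph{linear} inequality at the $pl_0^*$-moment level, and an ordinary Gronwall gives $\dbE|\D Y(t)|^{pl_0^*}\les C\,\dbE\int_t^T\cW_2^{pl_0^*}\,ds$.

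Second, the exponent $l_1^*$ does not enter ``inside the $r$-integral''; it appears because the $Z$-estimate (obtained from It\^o's formula applied to $e^{\b r}|\D\mathbb{Y}(t,r)|^2$, then BDG) requires $\widehat{\dbE}^t_t[\sup_r|\D\mathbb{Y}(t,r)|^p]$, and transferring this sup-norm bound from $\widehat{\dbP}^t$ back to $\dbP$ uses the inverse density $\cE(-\widehat{\mathbf L}\cd\widehat{\mathbf W})$, whose reverse H\"older exponent is $l_1$. The product $(l_0^*l_1^*)^2$ then arises because the $Z$-bound under $\dbP$ needs the sup-estimate at level $pl_0^*l_1^*$, which in turn pulls in $\cW_2$-moments at level $p(l_0^*l_1^*)^2$.
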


 \begin{proof}

Let $( \cl{\mathbb{Y}}^i(t,\cd),  \cl{\mathbb{Z}}^{i}(t,\cd))$ be the unique adapted solution to the following  BSDE (parameterized by $t$):
\begin{equation}\label{4.6111}
         \overline{\mathbb{Y}}^i(t,r)=\psi^i(t)+\int_r^Tg(t,s,\overline{Y}^i(s), \mathbb{\overline{Z}}^i(t,s), \overline{\mu}(s))ds-
         \int_r^T\mathbb{\overline{Z}}^i(t,s)dW^i(s).
         \end{equation}
 Thanks to \autoref{pro3.1}, it follows
\begin{equation}\label{4.7111}
 \cl{Y}^i(t)=\overline{\mathbb{Y}}^i(t,t)\q\hbox{and}\q  \cl{Z}^i(t,s)=\overline{ \mathbb{Z}}^i(t,s),\q (t,s)\in\D[0,T].
 \end{equation}
First, by combining
        (\ref{4.4111}) and (\ref{4.7111}), we have that
        $$
        \begin{aligned}
        &Y^{N,i}(t)=\mathbb{Y}^{N,i}(t,t),\q\, Z^{N,i,j}(t,s)=\mathbb{Z}^{N,i,j}(t,s),\q\  (t,s)\in \D[0,T],\\
        &\overline{Y}^i(t)= \overline{\mathbb{Y}}^i(t,t),\qq\q   \overline{Z}^i(t,s)=\mathbb{\overline{Z}}^i(t,s), \qq\qq  (t,s)\in\D[0,T],
         \end{aligned}
        $$
         where the pairs $(Y^{N,i} , Z^{N,i,j})$, $(\overline{Y}^i,\overline{Z}^i)$,
         $(\mathbb{Y}^{N,i}, \mathbb{Z}^{N,i,j})$ and $(\overline{\mathbb{Y}}^i, \mathbb{\overline{Z}}^i)$   are the unique solutions of
         Eqs. (\ref{1.3}), (\ref{5.2}),  (\ref{4.3-111}), (\ref{4.6111}), respectively.
Second, for simplicity presentation, we denote
         \begin{align*}
         & \D \mathbb{Y}^{N,i}(t,r)=\mathbb{Y}^{N,i}(t,r)-\mathbb{\overline{Y}}^{i}(t,r),\q
         \D \mathbb{Z}^{N,i,j}(t,r)=\mathbb{Z}^{N,i,j}(t,r)-\d_{ij}\mathbb{\overline{Z}}^{i}(t,r), \\
          & \D Y^{N,i}(t)=Y^{N,i}(t)- \overline{Y}^{i}(t),\q\q\q\,\,
         \D Z^{N,i,j}(t,r)=Z^{N,i,j}(t,r)-\d_{ij} \overline{Z}^{i}(t,r).
         \end{align*}
         Then, it follows
         \begin{align*}
         \D \mathbb{Y}^{N,i}(t,r)&=\int_r^T (I_1(t,s)+I_2(t,s))ds-\int_r^T\sum\limits_{j=1}^N \D \mathbb{Z}^{N,i,j}(t,s)d W^j(s),
         \end{align*}
        where
         \begin{align*}
         I_1(t,s)&:= g(t,s, Y^{N,i}(s),\mathbb{Z}^{N,i,i}(t,s), \mu^N(s))
                                 -g(t,s, Y^{N,i}(s),\mathbb{\overline{Z}}^{i}(t,s), \mu^N(s)),\\
         I_2(t,s)&:= g(t,s, Y^{N,i}(s),\mathbb{\overline{Z}}^{i}(t,s), \mu^N(s))
                                 -g(t,s,\overline{Y}^{i}(s), \mathbb{\overline{Z}}^{i}(t,s), \overline{\mu}(s)).
         \end{align*}
         Note that, since
         \begin{align*}
         &I_1(t,s)=\widehat{L}^i(t,s)\cd  \D \mathbb{Z}^{N,i,i}(t,s),\\
         &|\widehat{L}^i(t,s)|\leq \phi(|Y^{N,i}(s)|\vee  \cW_2(\mu^N(s), \d_0))(1+|\mathbb{Z}^{N,i,i}(t,s)|+|\mathbb{\overline{Z}}^{i}(t,s)|),\
          (t,s)\in \D[0,T],
         \end{align*}
       then  it follows from  Girsanov's theorem that
                $$\h{W}^{j,i}(s;t)=
\left\{\begin{array}{ll}
\ds W^j(s)-\int_t^s\widehat{L}^i(t,r)dr,\q\ j=i;\\
\ns\ds W^j(s),\qq\qq\qq\q \  \ \  j\neq i, \ t\leq s\leq T
\end{array}\right.$$
  is a Brownian motion under
         the probability $d\widehat{\dbP}^t=\cE(\widehat{\mathbf{L}}(t,\cd)\cd \mathbf{W})_t^Td\dbP$, where
         $\widehat{\mathbf{L}}(t,\cd)=\underbrace{(0,\cd\cd\cd,0,\widehat{L}^i(t,\cd),0,\cd\cd\cd,0)}_N$ and $\mathbf{W}=(W^1,\cds, W^N)$. From which, one gets that
         \begin{align*}
          \D \mathbb{Y}^{N,i}(t,r)&=\int_r^T I_2(t,s)ds-\int_r^T\sum\limits_{j=1}^N \D \mathbb{Z}^{N,i,j}(t,s)d \widehat{W}^{j,i}(s;t),\q r\in[t,T].
         \end{align*}
      Next,   we set $\b= \b^2+\b_0^2+\phi^2(\overline{C}\vee \overline{M}_1) +1$ (Here $\overline{C}$ and $\overline{M}_1$  are given
         \autoref{pro4.4} and \autoref{th3.4}, respectively). Applying It\^{o}'s formula to $e^{\b r} |\D \mathbb{Y}^{N,i}(t,r)|^2$, one gets that for $r\in[t,T]$,
         \begin{equation}\label{4.9-111}
         \begin{aligned}
         &\ e^{\b r}|\D \mathbb{Y}^{N,i}(t,r)|^2+\int_r^Te^{\b s} \(\b |\D \mathbb{Y}^{N,i}(t,s)|^2
         + \sum\limits_{j=1}^N |\D \mathbb{Z}^{N,i,j}(t,s)|^2 \)ds\\
         &=\int_r^Te^{\b s}2\D \mathbb{Y}^{N,i}(t,s)I_2(t,s)ds-\int_r^T e^{\b s}2\D \mathbb{Y}^{N,i}(t,s)
         \sum\limits_{j=1}^N \D \mathbb{Z}^{N,i,j}(t,s)d\widehat{W}^{j,i}(s;t).
         \end{aligned}
         \end{equation}
        On the one hand,  according to \autoref{ass4.1-1}-(ii) and the definition of $\b$, it yields
      that
          \begin{equation*}
          \begin{aligned}
         &\ e^{\b r}|\D \mathbb{Y}^{N,i}(t,r)|^2
         +\widehat{\dbE}^{t}_r\[\int_r^Te^{\b s}  \sum\limits_{j=1}^N |\D \mathbb{Z}^{N,i,j}(t,s)|^2 ds\]\\
         &\leq \widehat{\dbE}^{t}_r\[\int_r^Te^{\b s}
         (|\D Y^{N,i}(s)|^2+\cW_2^2(\mu^N(s),\overline{\mu}(s)) ) ds   \].
         \end{aligned}
          \end{equation*}
          On the other hand, Doob's maximum and H\"{o}lder's  inequalities imply that for $p\geq 2$,
          \begin{equation}\label{4.11-111}
          \begin{aligned}
          &\ \h{\dbE}^{t}_t\[ \sup\limits_{t\leq r\leq T} e^{ \frac{p}{2}\b r}|\D \mathbb{Y}^{N,i}(t,r)|^p     \]
           \leq C_p  \h{\dbE}^{t}_t\[\int_r^Te^{\frac{p}{2}\b s}
         \(|\D Y^{N,i}(s)|^p+\cW_2^p(\mu^N(s),\overline{\mu}(s)) \)ds \].
          \end{aligned}
          \end{equation}
         In particular, by taking $r=t$, it leads to that
         \begin{equation}\label{4.12-111}
         \begin{aligned}
         & \ |\D Y^{N,i}(t)|^p=|\D \mathbb{Y}^{N,i}(t,t)|^p
           \leq C_p  \h{\dbE}^{t}_t\[\int_t^Te^{\frac{p}{2}\b s}
         \(|\D Y^{N,i}(s)|^p+\cW_2^p(\mu^N(s),\overline{\mu}(s))  \)ds \].
         \end{aligned}
         \end{equation}
        Due to the norms $\|\widehat{\mathbf{L}}( t,\cd)\cd \mathbf{W}\|_{\text{BMO}_\dbP[0,T]}$ and
        $\|\widehat{\mathbf{L}}(t,\cd)\cd \mathbf{W}\|_{\text{BMO}_{\dbP,q}[0,T]}$ with $q>2$ being equivalent, thanks to (\ref{2.2}),  there exists a constant $l_0>1$ such that
        $$\mathbb{E}_t\Big[(\cE(\widehat{\mathbf{L}}(t,\cd)\cdot \mathbf{W})_t^T)^{l_0}\]\leq C_{l_0},$$
        which, combining (\ref{4.12-111}) and H\"{o}lder's inequality,  deduce that
         \begin{equation}\label{4.13-111}
         \begin{aligned}
          \ |\D Y^{N,i}(t)|^p
          &\leq C \dbE_t\[\cE(\widehat{\mathbf{L}}(t,\cd)\cdot \mathbf{W})_t^T \cd
         \(\int_t^T|\D Y^{N,i}(s)|^p+\cW_2^p(\mu^N(s),\overline{\mu}(s)) )ds \)    \]\\
         &\leq C\bigg\{\dbE_t\[\int_t^T|\D Y^{N,i}(s)|^{pl_0^*}+\cW_2^{pl_0^*}(\mu^N(s),\overline{\mu}(s))
          ds \]\bigg\}^\frac{1}{l_0^*},
         \end{aligned}
         \end{equation}
         where $l_0^*=\frac{l_0}{l_0-1}>1$.
          This implies that
          \begin{equation*}
          \begin{aligned}
          &\dbE |\D Y^{N,i}(t)|^{pl_0^* }
           \leq C\dbE\[\int_t^T|\D Y^{N,i}(s)|^{pl_0^*}+\cW_2^{pl_0^*}(\mu^N(s),\overline{\mu}(s))
          ds \].
          \end{aligned}
          \end{equation*}
         Moreover, Gronwall's  inequality  show that for a.e. $t\in[0,T]$,
          \begin{equation}\label{4.15-111}
          \begin{aligned}
          &\dbE |\D Y^{N,i}(t)|^{pl_0^* }
           \leq C\dbE\[\int_t^T\cW_2^{pl_0^*}(\mu^N(s),\overline{\mu}(s))ds \].
          \end{aligned}
          \end{equation}
          By inserting (\ref{4.15-111}) into (\ref{4.13-111}), one gets from H\"{o}lder's inequality that
          \begin{equation}\label{4.20-111}
          \begin{aligned}
          \dbE\[|\D Y^{N,i}(t)|^p\]
          &\leq   C\bigg\{\dbE\[
 \int_t^T  \cW_2^{pl_0^* }(\mu^N(s),\overline{\mu}(s))  ds\]\bigg\}^{\frac{1}{l_0^* }}.
          \end{aligned}
          \end{equation}

         On the other hand,  it is  easy to see that
$$d\mathbb{P}=\cE(-\widehat{\mathbf{L}}(t,\cd)\cdot \mathbf{\widehat{W}}^i(\cd;t))_t^T d\widehat{\dbP}^t\q\hbox{and}\q
\mathbf{\widehat{ W}}^i(\cd;t)=(\widehat{ W}^{1,i}(\cd;t),\cds,\widehat{ W}^{N,i}(\cd;t) ).$$
From  (\ref{2.2}),  there exists a  constant $l_1>1$ such that
$$\max\limits_{1\leq i\leq N}\h{\dbE}^{t}_t\[\(\cE(-\widehat{\mathbf{L}}(t,\cd)\cdot \mathbf{\widehat{ W}}^i(\cd;t))_t^T \)^{l_1}\]\leq C_{l_1}.$$
Consequently,  we have from (\ref{4.11-111}) and H\"{o}lder's inequality that
\begin{equation*}
\begin{aligned}
&\ \dbE_t\[ \sup\limits_{t\leq r\leq T}  |\D \mathbb{Y}^{N,i}(t,r)|^p\]\\
&\leq \h{\dbE}^{t}_t\[ \cE(-\widehat{\mathbf{L}}(t,\cd)\cdot \mathbf{\widehat{ W}}^i(\cd;t))_t^T\cd
\( \int_t^T (|\D Y^{N,i}(s)|^p+\cW_2^p(\mu^N(s),\overline{\mu}(s)) )ds                                              \)\]\\
&\leq C \bigg\{\h{\dbE}^{t}_t\[
 \int_t^T |\D Y^{N,i}(s)|^{pl_1^*}+\cW_2^{pl_1^*}(\mu^N(s),\overline{\mu}(s))
  ds\]\bigg\}^{\frac{1}{l_1^*}}\\
 &\leq C \bigg\{\dbE_t\[
 \int_t^T  |\D Y^{N,i}(s)|^{pl_0^*l_1^*}+\cW_2^{pl_0^*l_1^*}(\mu^N(s),\overline{\mu}(s))
  ds\]\bigg\}^{\frac{1}{l_0^*l_1^*}},
\end{aligned}
\end{equation*}
where $l_1^*=\frac{l_1}{l_1-1}$.
This together with (\ref{4.15-111}) provide that
\begin{equation}\label{4.16-111}
\begin{aligned}
&\ \dbE\[ \sup\limits_{t\leq r\leq T}  |\D \mathbb{Y}^{N,i}(t,r)|^p\]
  \leq  C\bigg\{\dbE\[
 \int_t^T  \cW_2^{pl_0^*l_1^*}(\mu^N(s),\overline{\mu}(s))
  ds\]\bigg\}^{\frac{1}{l_0^*l_1^*}}.
\end{aligned}
\end{equation}

Next, let us analyze the term for $\D \mathbb{Z}$. Thanks to (\ref{4.9-111}),  we arrive at, for $r\in[t,T],$
\begin{equation*}
\begin{aligned}
&  \      \h{\dbE}^{t}_t\[ \(  \int_r^Te^{\b s} \sum\limits_{j=1}^N |\D \mathbb{Z}^{N,i,j}(t,s)|^2 ds\)^\frac{p}{2}\]\\
         &\leq  C   \h{\dbE}^{t}_t\[ \( \int_r^Te^{\b s} |\D \mathbb{Y}^{N,i}(t,s)||I_2(t,s)|ds\)^\frac{p}{2}\]\\
        & \q +
          C\h{\dbE}^{t}_t\[ \(\int_r^T e^{2\b s} |\D \mathbb{Y}^{N,i}(t,s)|^2
         \sum\limits_{j=1}^N |\D \mathbb{Z}^{N,i,j}(t,s)|^2ds\)^\frac{p}{4}\]\\
       &\leq  C  \h{\dbE}^{t}_t\[ \( \int_r^Te^{\b s} |\D \mathbb{Y}^{N,i}(t,s)||I_2(t,s)|ds\)^\frac{p}{2}\]\\
        & \q +
          C\h{\dbE}^{t}_t\[\sup\limits_{t\leq s\leq T}e^{\frac{p}{4} \b s} |\D \mathbb{Y}^{N,i}(t,s)|^\frac{p}{2}
           \cd \(\int_r^T e^{\b s}
         \sum\limits_{j=1}^N |\D \mathbb{Z}^{N,i,j}(t,s)|^2ds\)^\frac{p}{4}\].
\end{aligned}
\end{equation*}
The fact $ab\leq \frac{a^2}{2}+\frac{b^2}{2}$ and the Lipschitz property of $g$ with respect to $(y,\mu,\nu)$ lead to
\begin{equation*}
\begin{aligned}
&    \    \h{\dbE}^{t}_t\[ \(  \int_r^Te^{\b s} \sum\limits_{j=1}^N |\D \mathbb{Z}^{N,i,j}(t,s)|^2 ds\)^\frac{p}{2}\]\\
         &\leq C\h{\dbE}^{t}_t\[\int_t^T  |\D Y^{N,i}(s)|^p+\cW_2^p(\mu^N(s),\overline{\mu}(s))ds\]
         +C\h{\dbE}^{t}_t\[\sup\limits_{t\leq s\leq T } |\D \mathbb{Y}^{N,i}(t,s)|^p\].
\end{aligned}
\end{equation*}
Consequently, we get by taking $r=t$ and using reverse H\"{o}lder's inequality that
\begin{equation}\label{4.18-111}
\begin{aligned}
&   \     \dbE \[ \(  \int_t^T  \sum\limits_{j=1}^N |\D \mathbb{Z}^{N,i,j}(t,s)|^2 ds\)^\frac{p}{2}\]\\
         &\leq C\bigg\{\dbE\[
 \int_t^T  |\D Y^{N,i}(s)|^{pl_0^*l_1^*}+\cW_2^{pl_0^*l_1^*}(\mu^N(s),\overline{\mu}(s))
 ds\]\bigg\}^{\frac{1}{l_0^*l_1^*}}\\
         &\q +C\bigg\{\dbE\[\sup\limits_{t\leq s\leq T } |\D \mathbb{Y}^{N,i}(t,s)|^{pl_0^*l_1^*}\]\bigg\}^{\frac{1}{l_0^*l_1^*}}.
\end{aligned}
\end{equation}
Combining (\ref{4.15-111}), (\ref{4.16-111}) and (\ref{4.18-111}), we deduce

\begin{align*}
& \       \dbE \[ \(  \int_t^T  \sum\limits_{j=1}^N |\D \mathbb{Z}^{N,i,j}(t,s)|^2 ds\)^\frac{p}{2}\]\\
&\leq  C\bigg\{\dbE\[
 \int_t^T  \cW_2^{pl_0^*l_1^*}(\mu^N(s),\overline{\mu}(s)) ds\]\bigg\}^{\frac{1}{l_0^*l_1^*}}  +C\bigg\{\dbE\[
 \int_t^T  \cW_2^{p(l_0^*l_1^*)^2}(\mu^N(s),\overline{\mu}(s))ds\]\bigg\}^{\frac{1}{(l_0^*l_1^*)^2}}.
 \end{align*}
Finally, by combining (\ref{4.20-111}), the fact that  $\D Z^{N,i,j}=\D \mathbb{Z}^{N,i,j}$,  and H\"{o}lder's inequality, we derive   the desired result.
\end{proof}

Based on the convergences for the particle systems as stated in
\autoref{th 5.1111}, we have the following   convergence rate for the particle system  (\ref{1.3}).

 \begin{theorem}\label{th 5.6} \rm
              Let \autoref{ass4.1-1} and  $\max\limits_{1\leq i\leq N}\|\psi^i(\cd)\|_{L^\i_{\sF_T}[0,T]}\leq K_1$ hold. If the generator $g$ is independent of the law of $Z(\cd,\cd)$,
                then for any $p\geq2$, there exist  constants $\l>1 $ and
               $C>0$, depending only on $(T,\b,\b_0, K_1,K_2,\phi(\cd),\g,p,\l)$,   such that for $i=1,\cds,N,$
               \begin{equation*}
                   \begin{aligned}
                       \esssup\limits_{t\in[0,T]}\mathbb{E}\Big[| Y^{N,i}(t)- \cl{Y}^i(t)|^p
                       +\(\int_t^T \sum\limits_{j=1}^N|Z^{N,i,j}(t,s)- \d_{ij}\cl{Z}^i(t,s)|^2ds\)^\frac{p}{2} \Big]
                           \leq CN^{-\frac{1}{2\l}}.
                 \end{aligned}
                \end{equation*}
                 \end{theorem}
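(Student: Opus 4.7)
The plan is to combine the stability bound of \autoref{th 5.1111} with a propagation-of-chaos coupling to an i.i.d.\ copy of the limit system and the Fournier--Guillin rate for empirical measures in $\mathbb{R}$. Writing $\mu_0:=(l_0^{*}l_1^{*})^2$, \autoref{th 5.1111} reduces the claim to establishing the estimate
\[
\mathbb{E}\int_t^T\cW_2^{p\mu_0}\bigl(\mu^N(s),\overline{\mu}(s)\bigr)\,ds\ \leq\ CN^{-1/2},
\]
since then the $1/\mu_0$-th power built into that theorem yields exactly the rate $N^{-1/(2\mu_0)}$ and one takes $\lambda:=\mu_0>1$.

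To establish this estimate, introduce the empirical measure $\overline{\mu}^N(s):=\frac{1}{N}\sum_{i=1}^{N}\delta_{\overline{Y}^i(s)}$ of the decoupled limit system and split by the triangle inequality into a coupling term and an empirical-measure term. For the empirical-measure term, \autoref{th3.4} gives $|\overline{Y}^i(s)|\leq\overline{M}_1$, hence the elementary bound $\cW_2^{p\mu_0}\leq(2\overline{M}_1)^{p\mu_0-2}\cW_2^2$ combined with the classical one-dimensional Fournier--Guillin rate $\mathbb{E}\cW_2^2(\overline{\mu}^N,\overline{\mu})\leq CN^{-1/2}$ from~\cite{Fournier-Guillin-2015} yields $\mathbb{E}\cW_2^{p\mu_0}(\overline{\mu}^N,\overline{\mu})\leq CN^{-1/2}$. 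For the coupling term, the inequality $\cW_2^2(\mu^N,\overline{\mu}^N)\leq\frac{1}{N}\sum_i|Y^{N,i}-\overline{Y}^i|^2$, Jensen (which applies since $p\mu_0\geq 2$), and exchangeability reduce it to controlling $A_N(s):=\mathbb{E}|Y^{N,1}(s)-\overline{Y}^1(s)|^{p\mu_0}$. The key step is to bound $A_N$ by invoking the \emph{pre-H\"older} inequality~\eqref{4.15-111} from the proof of \autoref{th 5.1111}, applied with the parameter ``$p$'' of that inequality replaced by $p l_0^{*}(l_1^{*})^2$, so that $pl_0^{*}\cdot l_0^{*}(l_1^{*})^2=p\mu_0$; the resulting identity
\[
A_N(t)\ \leq\ C\,\mathbb{E}\int_t^T\cW_2^{p\mu_0}\bigl(\mu^N(s),\overline{\mu}(s)\bigr)\,ds
\]
has the critical feature that the moment exponent on the left coincides with the Wasserstein exponent on the right. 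Assembling the three displayed inequalities produces $A_N(t)\leq C\int_t^T A_N(s)\,ds+CN^{-1/2}$, and backward Gronwall gives $A_N(t)\leq CN^{-1/2}$ uniformly in $t$. Consequently $\mathbb{E}\int_t^T\cW_2^{p\mu_0}(\mu^N,\overline{\mu})\,ds\leq CN^{-1/2}$; substituting into \autoref{th 5.1111} completes the argument with $\lambda=(l_0^{*}l_1^{*})^2$.

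The principal obstacle is the self-referential nature of a naive use of \autoref{th 5.1111}: applied at exponent $p$, it produces a Wasserstein term of exponent $p\mu_0$; after triangle-decomposition this term contains the moment $\mathbb{E}|Y^{N,1}-\overline{Y}^1|^{p\mu_0}$, and estimating that again by \autoref{th 5.1111} would bring in exponent $p\mu_0^2$, and so on, with no fixed point that yields a rate. Leveraging the pre-H\"older intermediate~\eqref{4.15-111}, in which the moment exponent on the left and the Wasserstein exponent on the right match, is what closes the argument in a single pass through Gronwall. A secondary technical point is that the Fournier--Guillin rate for $\mathbb{E}\cW_2^{p\mu_0}(\overline{\mu}^N,\overline{\mu})$ might a priori degrade with $p\mu_0$, but the boundedness of $\overline{Y}^i$ and the one-dimensional setting collapse everything to the classical $\cW_2^2$ rate via the interpolation $\cW_2^{p\mu_0}\leq(2\overline{M}_1)^{p\mu_0-2}\cW_2^2$.
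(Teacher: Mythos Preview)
Your proposal is correct and follows essentially the same route as the paper: bound $\cW_2^{q}(\mu^N,\overline\mu)$ via the triangle inequality through the empirical measure of the i.i.d.\ limit system, feed the coupling term back through an estimate of the type~\eqref{4.15-111}, close by Gronwall, then plug into \autoref{th 5.1111}. The paper also invokes the analogue of~\eqref{4.15-111} (introducing an extra exponent $l_2^*$) and uses $\cW_2\le\cW_q$ together with Fournier--Guillin at high order rather than your boundedness reduction $\cW_2^{p\mu_0}\le C\cW_2^2$; your exponent-matching choice $p'=p\,l_0^*(l_1^*)^2$ in~\eqref{4.15-111} avoids the paper's final H\"older step and yields the slightly sharper constant $\lambda=(l_0^*l_1^*)^2$ in place of the paper's $\lambda=(l_0^*l_1^*)^2 l_2^*$.
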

                 \begin{proof}
                   Let $(\widetilde{Y}^{N,i}(\cd),\widetilde{Z}^{N,i}(\cd,\cd))$ be the solution of Eq. (\ref{4.15111}).
                   Following a similar analysis to that in (\ref{4.15-111}), there exist  constants $l_2^*>1$ and  $C>0$ such that
                   for almost all $t\in[0,T]$ and for $q>2$,
          \begin{equation*}
          \begin{aligned}
          &\dbE | Y^{N,i}(t)-\widetilde{Y}^{N,i}(t)|^{q l_2^*}
           \leq C \dbE \int_t^T\cW_2^{ql_2^*}(\mu^N(s),\overline{\mu}(s)) ds  .
          \end{aligned}
          \end{equation*}
        For the term in  right hand side of the above inequality, H\"{o}lder's inequality  shows  that for $t\in[0,T]$,
             \begin{equation*}\label{8.1.004}\begin{aligned}
               &\ \dbE\bigg[\cW_2^{ql_2^*}(\mu^N(t),\widetilde{\mu}^N(t))\bigg]
             \leq\dbE\bigg[ \bigg( \frac{1}{N}\sum_{i=1}^N|Y^{N,i}(t)-\widetilde{Y}^{N,i}(t)|^2\bigg)^\frac{ql_2^*}{2}\bigg] \\
            & \leq\frac{1}{N}\sum_{i=1}^N\dbE\bigg[ |Y^{N,i}(t)-\widetilde{Y}^{N,i}(t)|^{ql_2^* }\bigg]
             \leq C\dbE \[\int_t^T\cW_2^{ql_2^* }(\mu^N(s),\bar{\mu}(s))ds\],
             \end{aligned}\end{equation*}
             where $\widetilde{\mu}^N(t)$ is given in (\ref{6.2}).
            Moreover, the triangle inequality implies that for $t\in[0,T]$,
              \begin{equation*}\label{8.1.005}
              \begin{aligned}
              & \ \dbE \bigg[\cW_2^{ql_2^*}(\mu^N(t), \bar{\mu}(t))\bigg]
             \leq  \dbE \bigg[\(\cW_2(\mu^N(t),\widetilde{\mu}^N(t))
             +\cW_2(\widetilde{\mu}^N(t), \bar{\mu}(t))\)^{ql_2^*}\bigg]\\
                &\leq2^{ql_2^*} \dbE \bigg[\cW_2^{ql_2^*}(\mu^N(t),\widetilde{\mu}^N(t))
                +\cW_2^{ql_2^*}(\widetilde{\mu}^N(t), \bar{\mu}(t))\bigg]\\
                &\leq C \dbE\[\int_t^T\cW_2^{ql_2^*}(\mu^N(s), \bar{\mu}(s))ds\]
             +C\dbE\bigg[\cW_2^{ql_2^*}(\widetilde{\mu}^N(t), \bar{\mu}(t))\bigg].
             \end{aligned}\end{equation*}
             Hence, by using Gronwall's inequality, we have that for any $q>2$,
             \begin{align*}
              \dbE \bigg[\cW_2^{ql_2^*}(\mu^N(t), \bar{\mu}(t))\bigg] \leq C\dbE\bigg[\cW_2^{ql_2^*}(\widetilde{\mu}^N(t), \bar{\mu}(t))\bigg].
             \end{align*}
            Now, by applying  Hao et al. \cite[(5.14)]{Hao-Hu-Tang-Wen-2022} and Fournier and Guillin \cite[Theorem 1]{Fournier-Guillin-2015}, we deduce that for $q>2$,
             \begin{align*}
              \esssup_{t\in[0,T]} \dbE \bigg[\cW_2^{ql_2^*}(\mu^N(t), \bar{\mu}(t))\bigg] \leq C \esssup_{t\in[0,T]}\dbE\bigg[\cW_{ql_2^*}^{ql_2^*}(\widetilde{\mu}^N(t), \bar{\mu}(t))\bigg] \leq N^{-\frac{1}{2}}.
             \end{align*}
                 Finally, it follows  from \autoref{th 5.1111} and H\"{o}lder's inequality that
                 \begin{equation*}
            \begin{aligned}
                 &\ \esssup\mathbb{E}\Big[| Y^{N,i}(t)- \cl{Y}^i(t)|^p
                       +\(\int_t^T \sum\limits_{j=1}^N|Z^{N,i,j}(t,s)- \d_{ij}\cl{Z}^i(t,s)|^2ds\)^\frac{p}{2} \Big]\\
               &\leq C\mathbb{E}\Big[\int_0^T \mathcal{W}^{p(l^*_0l^*_1)^2 l_2^*}_2(\mu^{N}(s),  \cl{ \mu  }(s))ds\Big]^\frac{1}{(l^*_0l^*_1)^2l_2^*}
                \leq N^{-\frac{1}{2(l^*_0l^*_1)^2 l_2^*}}.
            \end{aligned}
            \end{equation*}
            This completes the proof.
                 \end{proof}

\section{Appendix}\label{appendix}

In this section, we give the detailed proofs for the  supported results used above.

\subsection{Proof of \autoref{pro3.4}}

The proof is divided into two steps.

\textbf{Step 1.} The map $\Gamma$ is well-defined and stable in $\mathcal{B}_\varepsilon(\bar{R}_1, \bar{R}_2)$.
For given $y(\cd)\in L_{\dbF}^\i [0,T]$ and $z(\cd,\cd)\in  \cZ^2_{\dbF}(\D[0,T];\dbR^d)$,   consider
 \begin{equation}\label{3.109}
		Y^{y,z}(t)=\psi(t)+\int_t^Tg(t,r,y(r),Z^{y,z}(t,r),\dbP_{y(r)}, \dbP_{z(t,r)})dr-\int_t^TZ^{y,z}(t,r)dW(r), \  t\in[0,T].
	\end{equation}
We  study the above equation  using the parameterization method for $t$. More precisely, consider a  BSDE
      \begin{equation}\label{3.110}
		\mathcal{Y}^{y,z}(t,s)=\psi(t)+\int_s^Tg(t,r,y(r),\mathcal{Z}^{y,z}(t,r), \dbP_{y(r)}, \dbP_{z(t,r)})dr
-\int_s^T\cZ^{y,z}(t,r)dW(r),\  s\in[t,T].
	 \end{equation}
For each $t\in[0,T]$, according to \cite[Proposition 3.2]{Hao-Hu-Tang-Wen-2022}, BSDE (\ref{3.110})
 admits a unique solution $(\cY^{y,z}(t,\cd), \cZ^{y,z}(t,\cd))\in L^\i_{\dbF}(\O;C([t,T];\dbR))\ts \cZ^2_{\dbF}([t,T];\dbR^d)$,
 and moreover, for $s\in[t,T]$ and $\t\in\sT[t,T]$,
\begin{equation} \label{3.19-1}
 \begin{aligned}
 &|\cY^{y,z}(t,s)|\leq  \|\psi(\cd)\|_{L^\i_{\sF_T} [0,T]}+\|\ell(\cd,\cd)\|_{L^{\i}_{\dbF}(\D[0,T])}+(T-t) (\b+\b_0)\|y(\cd)\|_{L^\i_{\dbF}[t,T]},\\
 &  \dbE_\t\[\int^T_\t|\cZ^{y,z}(t,r)|^2dr\] \leq
        \frac{1}{\gamma^2}\exp\big(2\gamma\|\psi(\cd)\|_{L^\i_{\sF_T} [0,T]}\big)
+\frac{2}{\gamma}\exp\big(2\gamma\|\cY^{y,z}(t,\cd)\|_{L_\dbF^\infty[t,T]}\big)\\
&\quad\quad \times \bigg(\|\ell(\cd,\cd)\|_{L^{\i}_{\dbF}(\D[0,T])}+
(T-t) (\b+\b_0)\|y(\cd)\|_{L^\i_{\dbF}[t,T]}\bigg).
 \end{aligned}
 \end{equation}
Define
	\begin{equation*}
		Y^{y,z}(t)= \mathcal{Y}^{y,z}(t,t), \q Z^{y,z}(t,s)=\mathcal{Z}^{y,z}(t,s),\q   (t,s)\in\D[0,T].
	\end{equation*}
	Then the pair $(Y^{y,z}(\cd), Z^{y,z}(\cd,\cd))\in L_{\dbF}^\i[0,T]\ts \cZ^2_{\dbF}(\D[0,T];\dbR^d)$
	is an adapted solution of Eq. (\ref{3.109}).
 In addition, similar to the proof of \autoref{pro3.1}, one can show the uniqueness.
Taking $s=t$ in (\ref{3.19-1}), it follows from  H\"{o}lder's inequality that
         	\begin{equation}\label{3.8}
         \begin{aligned}
& |Y^{y,z}(t)| \leq  \|\psi(\cd)\|_{L^\i_{\sF_T} [0,T]}+\Big\{\||\ell(\cd,\cd)|^2\|_{L^{\i}_{\dbF}(\D[0,T])}\Big\}^\frac{1}{2}+(T-t) (\b+\b_0)\|y(\cd)\|_{L^\i_{\dbF}[t,T]},\\
   &  \dbE_\t\[\int^T_\t|Z^{y,z}(t,r)|^2dr\] \leq
        \frac{1}{\gamma^2}\exp\big(2\gamma\|\psi(\cd)\|_{L^\i_{\sF_T} [0,T]}\big)
+\frac{2}{\gamma}\exp\big(2\gamma\|\cY^{y,z}(t,\cd)\|_{L_\dbF^\infty[t,T]}\big)\\
&\quad\quad \times \bigg(\Big\{\||\ell(\cd,\cd)|^2\|_{L^{\i}_{\dbF}(\D[0,T])}\Big\}^\frac{1}{2}+
(T-t) (\b+\b_0)\|y(\cd)\|_{L^\i_{\dbF}[t,T]}\bigg).
		\end{aligned}
\end{equation}
 Define
    \begin{align*}
     \e_1=\frac{1}{2(\b+\b_0)},\q  \e_2=\frac{\g \bar{R}_2e^{-2\g \bar{R}_2}}{4(\b+\b_0)\bar{R}_1}.
    \end{align*}
    If $\|y(\cd)\|_{L^\i_{\dbF}[T-\e,T]}\leq \bar{R}_1$ and $\|z(\cd,\cd)\|^2_{ \cZ^2_{\dbF}(\D[T-\e,T])}\leq \bar{R}_2$,
    then it follows (\ref{3.19-1}) and (\ref{3.8}) that
    $$
    \begin{aligned}
    &\esssup_{t\in[T-\e,T]}\|\cY^{y,z}(t,\cd)\|_{L_\dbF^\infty[t,T]}\leq \bar{R}_1,\  \|Y^{y,z}(\cd)\|_{L^\i_{\dbF}[T-\e,T]}\leq \bar{R}_1,\\
    & \|Z^{y,z}(\cd,\cd)\|^2_{ \cZ^2_{\dbF}(\D[T-\e,T])}\leq \bar{R}_2,\ \forall \e\in(0,\e^\#],
     \end{aligned}
     $$
where $\e^\#=\min\{\e_1,\e_2\}$.  This means that the map
$$
(Y^{y,z}(\cd),Z^{y,z}(\cd,\cd))\deq \G(y(\cd),z(\cd,\cd))
$$
is well-defined and stable in  $\sB_\e(\bar{R}_1,\bar{R}_2)$.

\ms
	
	\textbf{Step 2.}  Prove that $\G$ is a contraction mapping.
	
	\ms

 Let $(y(\cdot),z(\cdot,\cdot))$ and $(\overline{y}(\cdot),\overline{z}(\cdot,\cdot))$ be elements of $\mathcal{B}_\varepsilon(\bar{R}_1,\bar{R}_2)$
 and let $(\cY^{y,z},\cZ^{y,z}), (\cY^{\overline{y},\overline{z}}, \cZ^{\overline{y},\overline{z}})$ the  unique solution
 to Eq. (\ref{3.110})  with $(y,z)$ and $(\overline{y},\overline{z})$, respectively.
 For $h=y,z,Y$  and $Z$, we denote $\d h=h-\bar{h}$  and
 $\d\cY=\cY^{y,z}-\cY^{\overline{y},\overline{z}}, \d\cZ=\cZ^{y,z}-\cZ^{\overline{y},\overline{z}}.$
Then, for almost all $t\in[0,T],$
	\begin{equation}\label{6.111}
		\begin{aligned}
			&\d \cY(t,r)+\int_r^T\d \cZ(t,s)dW(s)= \int_r^T \breve{J}(t,s)ds\\
			&\q +\int_r^T\big(g(t,s,y(s),\cZ^{y,z}(t,s),\dbP_{y(s)}, \dbP_{z(t,s)})
			-g(t,s,y(s), \cZ^{\overline{y},\overline{z}}(t,s),\dbP_{y(s)}, \dbP_{z(t,s)})ds, \ r\in[t,T],
		\end{aligned}
	\end{equation}
	where
	\begin{equation*}
		\breve{J}(t,s)\deq  g(t,s,y(s),\cZ^{\overline{y},\overline{z}}(t,s),\dbP_{y(s)}, \dbP_{z(t,s)})-
		g(t,s,\cl{y}(s),\cZ^{\overline{y},\overline{z}}(t,s),\dbP_{\cl{y}(s)}, \dbP_{\cl{z}(t,s)}).
	\end{equation*}
	 \autoref{ass4.1-1} implies that there exists a stochastic process $\breve{L}(\cd,\cd)$ such that, for almost all  $t\in [T-\e,T]$,
	\begin{equation*}
		\left\{
		\begin{aligned}
		\ds 	& g(t,s,y(s),\cZ(t,s),\dbP_{y(s)}, \dbP_{z(t,s)})
			-g(t,s,y(s),\cZ^{\overline{y},\overline{z}}(t,s),\dbP_{y(s)}, \dbP_{z(t,s)})=\breve{L}(t,s)\d \cZ(t,s),\\
	\ns\ds 		& |\breve{L}(t,s)|\leq \phi(|y(s)|\vee \cW_2(\dbP_{y(s)},\d_0))(1+|\cZ(t,s)|+|\cZ^{\overline{y},\overline{z}}(t,s)|),\ s\in [t, T].
		\end{aligned}
		\right.
	\end{equation*}
Note that	since all  the pairs $(y(\cd),z(\cd,\cd))$, $(Y(\cd),\cZ(\cd,\cd)),$ and $ ( \cY^{\overline{y},\overline{z}} (\cd), \cZ^{\overline{y},\overline{z}}(t,s)(\cd,\cd))$  belong to the space
	$ \sB_\e(\bar{R}_1,\bar{R}_2)$,
	one has  that
	\begin{equation*}
		\begin{aligned}
		\ds 	\|\breve{L}(\cd,\cd)\|_{\overline{\cZ}^2_{\dbF}(\D[T-\e,T])}
			&\leq \phi(\|y(\cd)\|_{L_{\dbF}^\i[T-\e,T]})(1+\|\cZ(\cd,\cd)\|_{\overline{\cZ}^2_{\dbF}(\D[T-\e,T])}
			+\|\cZ^{\overline{y},\overline{z}}(\cd,\cd)\|_{\overline{\cZ}^2_{\dbF}(\D[T-\e,T])})\\
		\ns\ds 	&\leq \phi(\bar{R}_1)(1+2\sqrt{\bar{R}_2}).
		\end{aligned}
	\end{equation*}
	Consequently, the process
		$\breve{W}(u;t)\deq W(r)-\int_t^u\breve{L}(t,s)ds,\   u\in[t,T]$
	is a standard Brownian motion under the probability
	$
		d\breve{\dbP}^t\deq \cE(\breve{L}(t,\cd)\cd W)^T_td\dbP.
$
	Eq. (\ref{6.111}) can be rewritten as
	\begin{equation*}
		\begin{aligned}
			&\d \cY(t,\t)+\int_\t^T\d \cZ(t,s)d\breve{W}(s;t)=\int_\t^T \breve{J}(t,s)ds,\q  \t\in\sT[T-\e,T],
		\end{aligned}
	\end{equation*}
	which implies that for any stopping time $\t\in\sT[T-\e,T]$,
	\begin{equation*}
		|\d \cY(t,\t)|^2+\breve{\dbE}^t_\t\int_\t^T|\d \cZ(t,s)|^2ds=\breve{\dbE}^t_\t\(\int_\t^T |\breve{J}(t,s)|ds\)^2.
	\end{equation*}
	In particular, when $\t=t$, we have from  the fact $\d Z=\d \cZ$ that
	\begin{equation*}
		|\d Y(t)|^2+\breve{\dbE}^t_t\int_t^T|\d Z(t,s)|^2ds=\breve{\dbE}^t_t\(\int_t^T |\breve{J}(t,s)|ds\)^2.
	\end{equation*}
	Notice that, for  $T-\e\leq t\leq s\leq T$, one has
	\begin{equation*}
		\begin{aligned}
		 	|\breve{J}(t,s)|&\leq
			\phi(|y(s)|\vee|\overline{y}(s)|\vee \cW_2(\dbP_{y(s)},\d_0)
                 \vee \cW_2(\dbP_{\overline{y}(s)},\d_0) )\cW_2(\dbP_{z(t,s)},\dbP_{\overline{z}(t,s)})\\
			 &\q +\b|\d y(s)|+\b_0\cW_2(\dbP_{y(s)},\dbP_{\overline{y}(s)}),\\
 	&\leq\phi(\bar{R}_1)\|\d z(t,s)\|_{L^2(\O)}+\b|\d y(s)|+\b_0\|\d y(s)\|_{L^2(\O)}.
		\end{aligned}
	\end{equation*}
	Consequently, one can deduce from H\"{o}lder's inequality that
	\begin{equation*}
		\begin{aligned}
		\ds 	& |\d Y(t)|^2+\breve{\dbE}^t_\t\int_\t^T|\d Z(t,s)|^2ds\\
		\ns\ds	&\leq 8\e   \[ \phi^2(\bar{R}_1)\int_\t^T\|\d z(t,s)\|^2_{L^2(\O)}ds+(\b^2 +\b_0^2)  \|\d y(\cd)\|^2_{L^\i_{\dbF}[T-\e,T]} \].
		\end{aligned}
	\end{equation*}
	Applying  (\ref{2.5})  leads to
	\begin{equation*}
		\begin{aligned}
			  \|\d Y(\cd)\|^2_{L^\i_{\dbF}[T-\e,T]}+(c_1)^2\|\d Z(\cd,\cd)\|^2_{ \cZ^2_{\dbF}(\D[T-\e,T])}
			\leq C_0 \e\Big\{  \|\d y(\cd)\|^2_{L^\i_{\dbF}[T-\e,T]}+
			\|\d  z(\cd,\cd)\|^2_{\cZ^2_{\dbF}(\D[T-\e,T])}  \Big\},
		\end{aligned}
	\end{equation*}
where $C_0$ depends on $c_2, \bar{R}_1,\bar{R}_2,T$ and $\phi(\cd)$.
Now, by	choosing $\e$ small enough, we obtain
	\begin{equation*}
		\begin{aligned}
		\ds 	&  \|\d Y(\cd)\|^2_{L^\i_{\dbF}[T-\e,T]}+\|\d Z(\cd,\cd)\|^2_{ \cZ^2_{\dbF}(\D[T-\e,T])}\\
	  	&  \leq \frac{1}{2} \(\|\d y(\cd)\|^2_{L^\i_{\dbF}[T-\e,T]}+ \|\d z(\cd,\cd)\|^2_{\cZ^2_{\dbF}(\D[T-\e,T])}\).
		\end{aligned}
	\end{equation*}
	Thus $\Gamma$ is a contraction mapping on the space $\mathcal{B}_\varepsilon(\bar{R}_1, \bar{R}_2)$, which implies that BSVIE \eqref{1.1} admits a unique adapted solution.

\subsection{Proof of \autoref{th 4.2}}

Before proving  \autoref{th 4.2}, we give the following lemma, which states an  a priori estimate.

\begin{assumption}\label{assumption5.111}\rm
Suppose that $g:\O\ts \D[0,T]\ts\dbR^{d}\ts \cP_2(\dbR^{d})\ra \dbR $ is
$\sF_T\otimes\sB(\D[0,T]\ts\dbR^{d}\ts   \cP_2(\dbR^{d}))$
measurable  such that $s\mapsto g(t,s,z,\nu)$ is $\dbF$-progressively measurable for
all $(t,z,\nu)\in[0,T]\ts  \dbR^{ d} \ts  \cP_2(\dbR^{d}) $, and for given $y(\cd)\in L_{\dbF}^\i [0,T] $, the following conditions hold:

\begin{enumerate}[~~\,\rm (i)]
	\item
	For any $(t,s)\in\D[0,T]$,  $z\in \dbR^d$, $\nu\in\cP_2(\dbR^d)$, $\dbP$-a.s.,
	\begin{equation*}
		\begin{aligned}
		& |g(t,s, z,\nu)|\leq \frac{\gamma}{2}|z|^2+\ell(t,s)+\b|y(s)|
		+\b_0\|y(s)\|_{L^2(\Om)}+ \gamma_0\cW_2(\nu,\d_{0})^{1+\alpha}.
	\end{aligned}
\end{equation*}
	\item
	For any
	$(t,s)\in\D [0,T]$, $z,\bar{z}\in \dbR^d$, $\nu,\overline{\nu}\in\cP_2(\dbR^d)$, $\dbP$-a.s.,
	$$
	\begin{aligned}
\ds 	&	|g(t,s,z,\nu)-g(t,s, \bar{z},\overline{\nu})|\\
	\ns\ds 	&\leq\phi\big(|y(s)|\vee\|y(s)\|_{L^2(\Om)}\big)\cd\big[\big(1+|z|+|\bar{z}|\big)|z-\bar{z}|
		+(1+\cW_2(\nu,\d_{0})^{\a}+\cW_2(\overline{\nu},\d_{0})^\a)\cW_2(\nu,\overline{\nu})\big].
	\end{aligned}$$
\item   For $(t,s)\in \D[0,T]$  and  $(z, \n)\in \dbR^d \ts \cP_2(\dbR^d)$, it holds that $\dbP$-a.s.,
        $$
             g(t,s, z,\nu)\leq
            -\frac{\tilde{\gamma}}{2}|z|^2+\ell(t,s)+\beta|y(s)|+\beta_0  \|y(s)\|_{L^2(\Om)}
             +\gamma_0  \cW_2(\n,\delta_{ 0 })^{1+\alpha}
         $$
or
          \begin{equation*}
           g(t,s,z,\nu)\geq \frac{\tilde{\gamma}}{2}|z|^2-\ell(t,s)
             -\beta|y(s)|-\beta_0  \|y(s)\|_{L^2(\Om)}
                     -\gamma_0   \cW_2(\n,\delta_{0})^{1+\alpha}.
           \end{equation*}
	\item   The free term $\psi(\cd)$ is bounded with $\|\psi(\cd)\|_{L^\i_{\sF_T} [0,T] }\leq K_1$, and the process $\ell(\cd,\cd)$  belongs to the space
	$  L^\i([0,T];  L^{1,\i}_{\dbF}([\cd,T];\dbR^+))$ with
	$ \|\ell(\cd,\cd)\|_{L^{\i}_{\dbF}(\D[0,T])} \leq K_3$.
\end{enumerate}
\end{assumption}

Note that those constants $L_1,\cds, L_6$ used in the proof of the following \autoref{pro 4.1} are introduced in (\ref{4.1-1}).

\begin{lemma}\label{pro 4.1}\rm
Under \autoref{assumption5.111} with given $(y(\cd),z(\cd,\cd))\in L_{\dbF}^\i[0,T]\ts   \cZ^2_{\dbF}(\D[0,T];\dbR^d)$,
	  Eq. (\ref{3.1110}) with $(y(\cd),z(\cd,\cd))$
	possesses a unique adapted solution $(Y(\cd),Z(\cd,\cd))\in L_{\dbF}^\i[0,T]\times  \cZ^2_{\dbF}(\D[0,T];\dbR^d)$.
	Moreover, there exists  a   positive constant $\overline{L}$  depending on    $K_1,K_3,\tilde{\g}, \g_0,\b,\b_0,\a,T$ such that
	\begin{align}
\ds &		\|Y(\cd)\|_{L^\i_{\dbF}[t,T]} \leq  \overline{L}\(1+\int_t^T\|y(\cd)\|_{L^\i_\dbF[s,T]}   ds\), \q  \forall t\in[0,T], \label{4.2-1}\\
\ns\ds 	&		 \|Z(\cd,\cd)\|^2_{ \cZ^2_\dbF(\D[0,T])} \leq  \overline{L}    \exp\( \overline{L} \int_0^T\|y(\cd)\|_{L^\i_\dbF[s,T]}  ds\). \label{4.2-2}
		\end{align}
\end{lemma}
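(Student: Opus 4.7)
I would prove this lemma via the $t$-parameterization approach of \autoref{pro3.1}, reducing the BSVIE to a family of parameterized quadratic BSDEs, and then derive the two a priori bounds by exponential changes of variable. For each fixed $t\in[0,T]$, since the inputs $y(\cd)$ and $z(\cd,\cd)$ are given, the equation to solve on $[t,T]$ becomes a \emph{classical} (non mean-field) quadratic BSDE
\begin{equation*}
\cY(t,s)=\psi(t)+\int_s^T g(t,r,\cZ(t,r),\dbP_{z(t,r)})\,dr-\int_s^T\cZ(t,r)\,dW(r),\q s\in[t,T],
\end{equation*}
with bounded terminal value $|\psi(t)|\le K_1$ and a generator which is quadratic in $\cZ$ with driver bounded in appropriate norms through $\ell$, $y$ and $z$. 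Classical quadratic BSDE theory (Kobylanski, Briand--Hu) then furnishes a unique adapted solution $(\cY(t,\cd),\cZ(t,\cd))\in L^\infty_\dbF([t,T];\dbR)\times\cZ^2_\dbF([t,T];\dbR^d)$; uniqueness relies on \autoref{assumption5.111}(ii) via a standard BMO linearization argument identical in spirit to the one performed in Step 2 of \autoref{th3.4}. \autoref{pro3.1} then yields that $Y(t):=\cY(t,t)$ and $Z(t,s):=\cZ(t,s)$ is the unique adapted solution of Eq.~(\ref{3.1110}) in the required space.

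For the $L^\infty$ estimate (\ref{4.2-1}) I would mimic the exponential-transform argument leading to (\ref{4.34-2}) in the proof of \autoref{th 4.3}. Define
\begin{equation*}
\Psi(u,x;t)=\exp\Bigl\{\gamma x+\gamma\int_0^u\bigl[\ell(t,r)+\beta|y(r)|+\beta_0\|y(r)\|_{L^2(\Om)}+\gamma_0\|z(t,r)\|_{L^2(\Om)}^{1+\alpha}\bigr]dr\Bigr\},
\end{equation*}
and apply It\^o--Tanaka to $\Psi(u,|\cY(t,u)|;t)$. By \autoref{assumption5.111}(i), the quadratic term $\tfrac{\gamma}{2}|\cZ|^2$ in $g$ is cancelled by half of the It\^o correction $\gamma|\cZ|^2$, so the resulting process is a submartingale; taking conditional expectation and using $\|\psi\|_\infty\le K_1$, $\|\ell\|_{L^\infty_\dbF(\D[0,T])}\le K_3$ gives
\begin{equation*}
|\cY(t,r)|\le K_1+K_3+(\beta+\beta_0)\int_r^T\|y(\cd)\|_{L^\infty_\dbF[u,T]}\,du+\gamma_0\int_r^T\|z(t,u)\|_{L^2(\Om)}^{1+\alpha}\,du.
\end{equation*}
Setting $r=t$ and applying the Young-type inequality $\gamma_0 a^{1+\alpha}\le\tfrac{\tilde{\gamma}\e_0}{4}a^2+L_{2,\gamma_0}$ embedded in (\ref{4.1-1}) to absorb the subquadratic $\|z\|^{1+\alpha}$ integral into a multiple of $\|z(\cd,\cd)\|^2_{\cZ^2_\dbF(\D[0,T])}$ (which is uniform in $t$ by definition of that norm) yields (\ref{4.2-1}) with a suitable $\overline{L}$.

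For (\ref{4.2-2}), I would apply It\^o's formula to $\Xi(\cY(t,s))$ with $\Xi(x)=\gamma^{-2}(e^{\gamma|x|}-\gamma|x|-1)$ satisfying $\Xi''(x)-\gamma|\Xi'(x)|=1$, exactly as in (\ref{3.9-111})--(\ref{3.9-112}) of \autoref{th3.4}. Combining \autoref{assumption5.111}(i) with this ODE identity turns the quadratic $|\cZ|^2$ coming from It\^o's correction into precisely the estimand, yielding for every stopping time $\tau\in\sT[t,T]$
\begin{equation*}
\dbE_\tau\int_\tau^T|\cZ(t,r)|^2\,dr\le 2\Xi\bigl(\|\cY(t,\cd)\|_{L^\infty_\dbF[t,T]}\bigr)+2\Xi'\bigl(\|\cY(t,\cd)\|_{L^\infty_\dbF[t,T]}\bigr)\Bigl[K_3+(\beta+\beta_0)T\|y\|_{L^\infty_\dbF[t,T]}+\gamma_0\!\!\int_t^T\!\|z(t,u)\|_{L^2}^{1+\alpha}du\Bigr].
\end{equation*}
Inserting (\ref{4.2-1}) into both $\Xi$ and $\Xi'$, Young-dominating $\|z\|^{1+\alpha}$ once more against $\|z\|^2_{\cZ^2_\dbF(\D[0,T])}$, and taking essential supremum over $t\in[0,T]$ produces the exponential-in-$\int\|y\|_\infty du$ bound (\ref{4.2-2}).

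The main obstacle I anticipate is the calibrated absorption of the subquadratic mean-field term $\gamma_0\cW_2(\nu,\delta_0)^{1+\alpha}$ with $\alpha\in[0,1)$: the Young-type inequality must be applied with a coefficient $\tfrac{\tilde{\gamma}\e_0}{4}$ strictly smaller than $\tfrac{\tilde{\gamma}}{2}$ so that the strictly quadratic bound of \autoref{assumption5.111}(iii) remains available to close the coupled $Y$--$Z$ estimates when this lemma is later invoked in the iteration proving \autoref{th 4.3}; the constants $L_1$ and $L_{2,\gamma_0}$ defined in (\ref{4.1-1}) are designed precisely for this delicate balance, and checking that the self-consistent coupling between (\ref{4.2-1}) and (\ref{4.2-2}) does not blow up is where the bookkeeping becomes nontrivial.
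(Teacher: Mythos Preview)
There is a genuine gap in your proposal stemming from a misreading of Eq.~(\ref{3.1110}). You write the parameterized BSDE with law argument $\dbP_{z(t,r)}$, the distribution of the \emph{given} process $z$, and then call it a ``classical (non mean-field) quadratic BSDE''. But in (\ref{3.1110}) and (\ref{3.211}) the law argument is $\dbP_{Z(t,s)}$ (resp.\ $\dbP_{\cZ(t,r)}$), the distribution of the \emph{unknown}; the parameterized equation remains genuinely mean-field in $\cZ$. This has two consequences. First, existence and uniqueness cannot be read off from Kobylanski or Briand--Hu; the paper invokes the mean-field quadratic BSDE result \cite[Theorem~3.8]{Hao-Hu-Tang-Wen-2022}. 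Second, and more seriously, your a priori bounds collapse: the integral you must control is $\gamma_0\int_r^T\|\cZ(t,u)\|_{L^2(\Om)}^{1+\alpha}du$, not $\gamma_0\int_r^T\|z(t,u)\|_{L^2(\Om)}^{1+\alpha}du$, so it cannot be Young-dominated by the known quantity $\|z(\cd,\cd)\|^2_{\cZ^2_\dbF(\D[0,T])}$. Indeed, the target estimate (\ref{4.2-1}) depends only on $\|y\|_{L^\infty_\dbF}$ and not on $z$ at all, so any bound retaining a $z$-term is already off track.

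The paper closes this loop by using the strictly quadratic condition \autoref{assumption5.111}(iii) \emph{inside the present lemma}, not merely ``later'' as you suggest. From (iii) one gets $\dbE\int_r^T\frac{\tilde{\gamma}}{2}|\cZ(t,s)|^2ds\le \dbE|\cY(t,r)|+K_1+K_3+(\beta+\beta_0)\int_r^T\|y\|_{L^\infty_\dbF[s,T]}ds+\gamma_0\int_r^T\|\cZ(t,s)\|_{L^2}^{1+\alpha}ds$; Young's inequality with parameter $\e_0$ (this is what the constants $L_1,L_{2,\delta}$ in (\ref{4.1-1}) are for) then yields $\int_r^TL_0\gamma_0\|\cZ(t,s)\|_{L^2}^{1+\alpha}ds\le \e_0\dbE|\cY(t,r)|+\e_0(\beta+\beta_0)\int_r^T\|y\|_{L^\infty_\dbF[s,T]}ds+L_3$. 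Substituting this back into an exponential bound for $\exp(\tilde{\gamma}|\cY(t,r)|)+\dbE_r\int_r^T|\cZ|^2ds$ (obtained via \cite[Proposition~2.1]{Fan-Hu-Tang-2019}) produces a self-referential inequality containing $\exp(\e_0\dbE|\cY(t,r)|)$ on the right; choosing $\e_0=\tilde{\gamma}/2$ and applying Jensen's inequality closes it, giving (\ref{4.2-1}), and (\ref{4.2-2}) then follows by feeding (\ref{4.2-1}) back into the combined estimate. Your $\Xi$-function argument for (\ref{4.2-2}), while standard in the bounded-in-$\nu$ setting of \autoref{ass4.1-1}, does not by itself handle the unbounded $\gamma_0\cW_2(\nu,\delta_0)^{1+\alpha}$ term without this self-consistency step.
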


\begin{proof}

	For  fixed  $(y(\cd),z(\cd,\cd))\in L_{\dbF}^\i[0,T]\ts   \cZ^2_{\dbF}(\D[0,T];\dbR^d)$  and for almost all $t\in[0,T]$,
	under  \autoref{assumption5.111},  Hao et al. \cite[Theorem 3.8]{Hao-Hu-Tang-Wen-2022} can  show that the mean-field QBSDE (\ref{3.211})
	possesses a unique solution $(\cY(t,\cd), \cZ(t,\cd))\in L^\i_{\dbF}(\O;C([t,T];\dbR))\ts  \cZ^2_{\dbF}([t,T];\dbR^d)$.
	 Thanks to  \autoref{pro3.1}, Eq. (\ref{3.1110}) with $(y(\cd),z(\cd,\cd))$
	 admits a unique adapted solution $(Y(\cd),Z(\cd,\cd))\in L_{\dbF}^\i[0,T]\times  \cZ^2_{\dbF}(\D[0,T];\dbR^d)$.
	
	\ms
	
	Next, we prove the estimates (\ref{4.2-1}) and (\ref{4.2-2}),  whose proof will be  split into several steps.

\ms
	
	\textbf{Step 1.}   Estimate the following term  $$\exp(\tilde{\g}|\cY(t,r)|)+\dbE_r\int_r^T|\cZ(t,s)|^2ds,\  r\in[t,T].$$

 For this, we define
	\begin{equation*}
		M\deq  3\esssup_{t\in[0,T]}\esssup_{\t\in\sT[t,T]}\Big\| \dbE_\t\[\int_\t^T|\cZ(t,s)|^2ds \] \Big \|_\i<\i.
	\end{equation*}
Then, it follows  from Young's inequality that  for any $p\geq1$,
	\begin{equation*}
		p\Big\{\dbE|\cZ(t,s)|^2\Big\}^{\frac{1+\a}{2}}\leq \frac{1}{M}\dbE|\cZ(t,s)|^2+L_{M,\a,p},
	\end{equation*}
	where $L_{M,\a,p}$ depends on $M,\a$ and $p$.
 John-Nirenberg's inequality implies that
	\begin{equation*}
		\dbE_t\[ \exp\Big\{\int_t^T \frac{1}{M} |\cZ(t,s)|^2ds\Big\} \]=
		\dbE_t\[ \exp\Big\{\int_t^T   |\widehat{\cZ}(t,s)|^2ds\Big\} \]\leq\frac{3}{2},\q t\in[0,T],
	\end{equation*}
	where $\widehat{\cZ}(t,s)\deq \frac{\cZ(t,s)}{\sqrt{M}}$ with 	$\|\widehat{\cZ}(t,\cd)\|_{ \cZ^2_\dbF[t,T]}\leq \frac{1}{3}$.
   Hence,   Jensen's inequality allows us to show
	\begin{equation*}
		\begin{aligned}
			& \dbE_t\[\exp\Big\{p\int_t^T\|\cZ(t,s)\|_{L^2(\O)}^{1+\a}ds \Big\} \]=\exp\Big\{p\int_t^T\big\{\dbE|\cZ(t,s)|^2\big\} ^{\frac{1+\a}{2}}ds \Big\}\\
			&\leq  \widetilde{L} \exp\Big\{\int_t^T \frac{1}{M}\dbE|\cZ(t,s)|^2ds  \Big\}
			\leq \widetilde{L} \dbE\[ \exp\Big\{\int_t^T \frac{1}{M} |\cZ(t,s)|^2ds\Big\}  \]\\
			&\leq \widetilde{L }\dbE\[\dbE_t\[ \exp\Big\{\int_t^T \frac{1}{M} |\cZ(t,s)|^2ds\Big\} \] \]\leq \frac{3}{2}\widetilde{L}, \  t\in[0,T],
		\end{aligned}
	\end{equation*}
	where $\widetilde{L}$ is a constant depending only on $M,\a,p$ and $T$.
	From this and  note that  $y(\cd)\in L^\i_{\dbF}[0,T]$ and $\|\ell(\cd,\cd)\|_{L^{\i}_{\dbF}(\D[0,T])}\leq K_3$,  one could   check that
	\begin{align*}
		\overline{\ell}(t,s)&\deq \ell(t,s)+\b|y(s)|
		+\b_0\|y(s)\|_{L^2(\Om)}+\gamma_0\|\cZ(t,s)\|_{L^2(\Om)}^{1+\alpha}\\
& \in  \bigcap\limits_{p\geq 1}\cE_{\dbF}^{p}(\O;L^1([t,T];\dbR^+)).
	\end{align*}
	Consequently, according to  Fan, Hu, Tang  \cite[Proposition 2.1]{Fan-Hu-Tang-2019} (see also Fan,Wang,Yong \cite[Lemma 2.7]{Fan-Wang-Yong-2023}),
	there exists a positive constant $L_0$ depending on $K_1, K_3,\b,\b_0$ and $\g_0$ such that, for
	almost all $t\in[0,T]$ and for  $r\in[t,T]$,
	\begin{equation}\label{3.8-1}
		\begin{aligned}
			& \exp\big\{\tilde{\g}|\cY(t,r)| \big \}+\dbE_r\int_r^T|\cZ(t,s)|^2ds\\
			& \leq L_0\dbE_r\exp\Big\{L_0|\psi(t)|+L_0\int_r^T[\ell(t,s)+\b|y(s)|
			+\b_0\|y(s)\|_{L^2(\Om)}+\gamma_0\|\cZ(t,s)\|_{L^2(\Om)}^{1+\alpha}] ds   \Big\} \\
			&\leq L_0\exp\Big\{L_0(K_1+K_3)\Big\}\cdot    \exp\Big\{L_0(\b+\b_0)\int_r^T\|y(\cd)\|_{L^\i_\dbF[s,T]}  ds\Big\}\\
			 &\q\ \cdot \exp\Big\{L_0\g_0\int_r^T\Big[\dbE[|\cZ(t,s)|^2]\Big]^{\frac{1+\a}{2}}ds\Big\}.
		\end{aligned}
	\end{equation}

	\textbf{Step 2.}    Estimate the last term of the previous inequality, i.e.,
	$$L_0\g_0\int_r^T\Big\{\dbE[|\cZ(t,s)|^2]\Big\}^{\frac{1+\a}{2}}ds.$$
	
	Define the stopping time
	$$\tau_k=\inf\Big\{u\in[r,T]: \int_r^u|\cZ(t,s)|^2ds\geq k\Big\}\wedge T.$$
Then, thanks to the strictly quadratic condition (iii) of \autoref{assumption5.111}, we have that for almost all $t\in[0,T]$,
	\begin{equation*}
		\begin{aligned}
			\ds \cY(t,& r) -\cY(t,\tau_k)+\int_r^{\tau_k}\cZ(t,s)dW(s)
			=\int_r^{\tau_k}g(t,s,y(s),\cZ(t,s), \dbP_{y(s)}, \dbP_{\cZ(t,s)})ds\\
			\ns\ds &\geq\int_r^{\tau_k}\(
			\frac{\tilde{\gamma}}{2}|\cZ(t,s)|^2-\ell(t,s)-\beta|y(s)|-\beta_0\|y(s)\|_{L^2(\Om)}
			-\gamma_0\|\cZ(t,s)\|_{L^2(\Om)}^{1+\alpha}\)ds,
		\end{aligned}
	\end{equation*}
	which by taking the expectation on both sides, implies that
	\begin{equation*}
		\begin{aligned}
			   \dbE \int_r^{\tau_k} \frac{\tilde{\gamma}}{2}|\cZ(t,s)|^2ds
			&   \leq \dbE\int_r^{T}\(\ell(t,s)+\beta|y(s)|+\beta_0\|y(s)\|_{L^2(\Om)}
			+\gamma_0\|\cZ(t,s)\|_{L^2(\Om)}^{1+\alpha}\)ds \\
&\q +			
			\dbE\big[ \cY(t,r)-\cY(t,\tau_k)\big].
		\end{aligned}
	\end{equation*}
	Letting $k\ra \i$, and note   the boundedness of $\ell(\cd,\cd)$ and $\psi(\cd)$,   we have from Fatou's lemma  that
	\begin{equation} \label{4.5-1}
		\begin{aligned}
			&   \dbE \int_r^{T} \frac{\tilde{\gamma}}{2}|\cZ(t,s)|^2ds\\
			&   \leq\dbE\Big\{ |\cY(t,r)|+|\psi(t)|
			+\int_r^{T}\(\ell(t,s)+\beta|y(s)|+\beta_0\|y(s)\|_{L^2(\Om)}
			+\gamma_0\|\cZ(t,s)\|_{L^2(\Om)}^{1+\alpha}\)ds \Big\}\\
			&   \leq  \dbE[|\cY(t,r)|] +K_1+K_3
			+\int_r^{T}\( (\beta+\beta_0) \|y(\cd)\|_{L^\i_{\dbF} [s,T] }
			+\gamma_0\|\cZ(t,s)\|_{L^2(\Om)}^{1+\alpha}\)ds .
		\end{aligned}
	\end{equation}
	Recall  that, for two positive constants $a$, $b$,  and  $\a\in[0,1)$, Young's inequality implies that
	\begin{equation}\label{4.7-1}
		a b^{1+\a}=\left(\left(\frac{1+\a}{2}\right)^{\frac{1+\a}{1-\a}} a^{\frac{2}{1-\a}}\right)^{\frac{1-\a}{2}}\left(\frac{2}{1+\a} b^{2}\right)^{\frac{1+\a}{2}} \leq b^{2}+\frac{1-\a}{2}\left(\frac{1+\a}{2}\right)^{\frac{1+\a}{1-\a}} a^{\frac{2}{1-\a}}.
	\end{equation}
	We let $\e_0$ be a positive constant (to be specified later), and take
	$a=\frac{4\gamma_0}{\tilde{\gamma}}$ and $b=\|\cZ(t,s)\|_{L^2(\Om)}$, then \rf{4.7-1} implies that
	\begin{equation}\label{4.8}
		\begin{aligned}
			\e_0\gamma_0\|\cZ(t,s)\|^{1+\alpha}_{L^2(\Om)}&=\frac{\tilde{\gamma}\e_0}{4}
			\Big[\frac{4\gamma_0}{\tilde{\gamma}}\|\cZ(t,s)\|^{1+\alpha}_{L^2(\Om)}\Big]
			\leq \frac{\tilde{\gamma}\e_0}{4}\dbE|\cZ(t,s)|^2+L_1.
		\end{aligned}
	\end{equation}
	Multiplying both sides of (\ref{4.5-1}) by $\e_0$ and substituting (\ref{4.8}) yields
	\begin{equation*}
		\begin{aligned}
			 \dbE\int_r^T \frac{\tilde{\gamma}\e_0}{2}|\cZ(t,s)|^2ds
			&   \leq  \e_0  \dbE |\cY(t,r)|  + (K_1+K_3)\e_0
			+ (\beta+\beta_0)\e_0\int_r^{T}\|y(\cd)\|_{L^\i_{\dbF} [s,T] } ds\\
			&\q +\dbE\int_r^T \frac{\tilde{\gamma}\e_0}{4}|\cZ(t,s)|^2ds+L_1 T.
		\end{aligned}
	\end{equation*}
	In other words,
	\begin{equation}\label{4.9}
		\begin{aligned}
			\dbE\int_r^T \frac{\tilde{\gamma}\e_0}{4}|\cZ(t,s)|^2ds
			\leq  \e_0  \dbE |\cY(t,r)|  + (K_1+K_3)\e_0+L_1 T
			+ (\beta+\beta_0)\e_0\int_r^{T}\|y(\cd)\|_{L^\i_{\dbF} [s,T] } ds.
		\end{aligned}
	\end{equation}
	By (\ref{4.7-1}) again, for any $\d>0$, we have
	\begin{equation}\label{4.9-1}
 	\d\|\cZ(t,s)\|_{L^2(\Om)}^{1+\alpha}\leq\frac{\tilde {\gamma}\e_0}{4}\dbE|\cZ(t,s)|^2+L_{2,\d},
	\end{equation}
	which combining (\ref{4.9})  leads to
	\begin{equation}\label{4.10}
		\begin{aligned}
			& \int_r^TL_0\g_0\Big\{\dbE[|\cZ(t,s)|^2]\Big\}^{\frac{1+\a}{2}}ds=  \int_r^T L_0\g_0\|\cZ(t,s)\|_{L^2(\Om)}^{1+\alpha} ds\\
			&  \leq\int_r^T \frac{\tilde {\gamma}\e_0}{4} \dbE|\cZ(t,s)|^2ds+L_{2,L_0\g_0}T\\
			&\leq  \e_0  \dbE |\cY(t,r)|  + (\beta+\beta_0)\e_0\int_r^{T}\|y(\cd)\|_{L^\i_{\dbF} [s,T] } ds+L_3.
		\end{aligned}
	\end{equation}

 	\textbf{Step 3.}      Prove the estimates (\ref{4.2-1}) and (\ref{4.2-2}).
	
	\ms
	
	Inserting (\ref{4.10})  into (\ref{3.8-1}) yields
	\begin{equation}\label{3.15-1}
		\begin{aligned}
			 \exp\big\{\tilde{\g}|\cY(t,r)| \big\}+\dbE_r\int_r^T|\cZ(t,s)|^2ds\leq  L_4 \exp\Big\{L_4\int_r^T\|y(\cd)\|_{L^\i_\dbF[s,T]}  ds\Big\}
			\exp\big\{\e_0  \dbE|\cY(t,r)|    \big\}.
		\end{aligned}
	\end{equation}
Now, if   taking	 $\e_0=\frac{\tilde{\g}}{2}$, then it follows from Jensen's inequality that
	\begin{equation}\label{4.12}
		\dbE|\cY(t,r)| \leq L_5\Big\{1+\int_r^T\|y(\cd)\|_{L^\i_\dbF[s,T]}   ds\Big\},\  r\in[t,T].
	\end{equation}
	Combining (\ref{3.15-1}) and (\ref{4.12}), we have, for $r\in[t,T]$,
	\begin{equation}\label{4.12-1}
\begin{aligned}
		&|\cY(t,r)| \leq L_5\(1+\int_r^T\|y(\cd)\|_{L^\i_\dbF[s,T]}   ds\), \\
			& \dbE_r\int_r^T|\cZ(t,s)|^2ds \leq L_6    \exp\Big\{L_6\int_r^T\|y(\cd)\|_{L^\i_\dbF[s,T]}  ds\Big\}
			\leq L_6    \exp\Big\{ L_6\int_t^T\|y(\cd)\|_{L^\i_\dbF[s,T]}  ds\Big\}.
		\end{aligned}
	\end{equation}
	Now let $r=t$ in the first inequality of  (\ref{4.12-1}), we arrive at
	\begin{equation*}
		\|Y(\cd)\|_{L^\i_{\dbF}[t,T]}\leq L_5\(1+\int_t^T\|y(\cd)\|_{L^\i_\dbF[s,T]}   ds\).
	\end{equation*}
	Since the second inequality of  (\ref{4.12-1}) still holds   if replacing $r$ by stopping time $\t\in \sT[t,T]$, we have  from the definition of
   $ \|\cd\|^2_{ \cZ^2_\dbF[t,T]}$
	that
	\begin{equation*}\label{4.16}
		\begin{aligned}
			&  \|Z(t,\cd)\|^2_{\cZ^2_\dbF[t,T]} \leq L_6    \exp\(L_6\int_t^T\|y(\cd)\|_{L^\i_\dbF[s,T]}  ds\),\  t\in[0,T].
		\end{aligned}
	\end{equation*}
	Therefore,
	\begin{equation*}\label{4.16}
		\begin{aligned}
			&  \|Z(\cd,\cd)\|^2_{ \cZ^2_\dbF(\D[0,T])}
			= \esssup_{t\in[0,T]}\|Z(t,\cd)\|^2_{\cZ^2_\dbF[t,T]} \leq L_6    \exp\(L_6\int_0^T\|y(\cd)\|_{L^\i_\dbF[s,T]}  ds\).
		\end{aligned}
	\end{equation*}
\end{proof}

Based on the above lemma, now we can prove \autoref{th 4.2}.

\begin{proof}[Proof of \autoref{th 4.2}]
	
	
	The proof is split into two steps.
	
	\ms
	
	\textbf{Step 1.}  The map $\G$ is well-defined and stable in $\sB_\e(R_1,R_2)$.
	
	\ms
	
	  \autoref{pro 4.1} tells us that for any given pair $(y(\cd), z(\cd,\cd))\in L_{\dbF}^\i[0,T]\ts \cZ_{\dbF}^2(\D[0,T];\dbR^d)$,
	  Eq. (\ref{3.1110}) with $(y(\cd),z(\cd,\cd))$
	possesses a unique adapted solution $(Y(\cd),Z(\cd,\cd))\in L_{\dbF}^\i[0,T]\times \cZ^2_{\dbF}(\D[0,T];\dbR^d)$.
    Define a map $\G$ from $L_{\dbF}^\i[0,T]\ts \cZ_{\dbF}^2(\D[0,T];\dbR^d)$ to itself by
	\begin{equation*}
		(Y(\cd),Z(\cd,\cd))\deq \G(y(\cd),z(\cd,\cd)).
	\end{equation*}

	Next, we show that the map $\G$ is stable in $\sB_\e(R_1,R_2)$.
	%
%
From (\ref{4.2-1}) and (\ref{4.2-2}), we have
	\begin{equation*}
		\begin{aligned}
		\ds 	&\|Y(\cd)\|_{L^\i_{\dbF}[t,T]} \leq \overline{L}\(1+\int_t^T\|y(\cd)\|_{L^\i_\dbF[s,T]}   ds\)\leq \overline{L}+\overline{L}(T-t)\|y(\cd)\|_{L^\i_{\dbF}[t,T]},\\
		\ns\ds 	&\qq\q\ \|Z(\cd,\cd)\|^2_{\cZ^2_\dbF(\D[0,T])} \leq \overline{L}    \exp\(\overline{L}\int_0^T\|y(\cd)\|_{L^\i_\dbF[s,T]}  ds\),
		\end{aligned}
	\end{equation*}
	where   $\overline{L}$  is given in \autoref{pro 4.1}. Now, we define
	\begin{equation*}
		R_1=2\overline{L}\q \hb{and} \q R_2=\overline{L}e^{\overline{L}R_1T}.
	\end{equation*}
	Then, for  $\e\in(0,\frac{1}{2\overline{L}}]$, if $\|y(\cd)\|_{L^\i_{\dbF}[T-\e,T]}\leq R_1$, we have
	\begin{equation*}
		\|Y(\cd)\|_{L^\i_{\dbF}[T-\e,T]}\leq R_1\q\hb{and}\q  \|Z(\cd,\cd)\|^2_{\cZ^2_{\dbF}(\D[T-\e,T])}\leq R_2,
	\end{equation*}
	which implies that $\G$ is stable in $\sB_\e(R_1,R_2)$.

\ms
	
	\textbf{Step 2.}  Prove that   $\G$ is a contraction mapping.

We continue to use those notations in Step 2 of  \autoref{pro3.4}.
Let us  consider two pairs
	$(y(\cd),z(\cd,\cd))$ and $(\cl{y}(\cd),\cl{z}(\cd,\cd))$, both in  $L_{\dbF}^\i[0,T]\ts \cZ_{\dbF}^2(\D[0,T];\dbR^d)$, and  define
	\begin{equation*}
		(Y(\cd),Z(\cd,\cd))\deq \G(y(\cd),z(\cd,\cd))\q\ \hb{and}\q\    (\cl{Y}(\cd),\cl{Z}(\cd,\cd))\deq  \G(\cl{y}(\cd),\cl{z}(\cd,\cd)).
	\end{equation*}
    Let $(\cY(t,\cd), \cZ(t,\cd))\in  L^\i_{\dbF}(\O;C([t,T];\dbR))\ts \cZ^2_{\dbF}([t,T];\dbR^d)$ and $(\cl{\cY}(t,\cd), \cl{\cZ}(t,\cd))\in  L^\i_{\dbF}(\O;C([t,T];\dbR))\ts \cZ^2_{\dbF}([t,T];\dbR^d)$ be the solutions to Eq. (\ref{3.211}) with
     $(y(\cd),z(\cd,\cd))$ and $ (\cl{y}(\cd),\cl{z}(\cd,\cd))$, respectively.
   From \autoref{pro3.1}, for $(t,s)\in \D[0,T]$,
	\begin{equation*}
 Y(t)=\cY(t,t),\q  Z(t,s)=\cZ(t,s),\q \text{and}\q \cl{Y}(t)=\cl{\cY}(t,t),\q \cl{Z}(t,s)=\cl{\cZ}(t,s).
	\end{equation*}
 For $h=y(\cd),z(\cd,\cd),\cY(\cd,\cd),\cZ(\cd,\cd),Y(\cd), Z(\cd,\cd),$  denote $\d h=h-\bar{h}$. Then, for $(y(\cd),z(\cd,\cd)), (\overline{y}(\cd),$ $\overline{z}(\cd,\cd))\in\sB_\e(R_1,R_2)$,
             \begin{equation}\label{4.18}
             \begin{aligned}
               &\d \cY(t,r)+\int_r^T\d \cZ(t,s)dW(s)\\
               &=\int_r^T\big(g(t,s,y(s),\cZ(t,s),\dbP_{y(s)}, \dbP_{\cZ(t,s)})
                 -g(t,s,y(s),\cl{\cZ}(t,s),\dbP_{y(s)}, \dbP_{\cZ(t,s)})+\tilde{\tilde{J}}(t,s)\big)ds,
               \end{aligned}
             \end{equation}
             where
             \begin{equation*}
            \tilde{\tilde{J}}(t,s)\deq g(t,s,y(s),\cl{\cZ}(t,s),\dbP_{y(s)}, \dbP_{\cZ(t,s)})-
                 g(t,s,\cl{y}(s),\cl{\cZ}(t,s),\dbP_{\cl{y}(s)}, \dbP_{\cl{\cZ}(t,s)}).
                 \end{equation*}
From \autoref{ass4.1}, for almost all $t\in[T-\e,T]$, there exists a stochastic process $\tilde{\tilde{L}}(\cd,\cd)$ such that
             \begin{equation*}
             \left\{
             \begin{aligned}
              & g(t,s,y(s),\cZ(t,s),\dbP_{y(s)}, \dbP_{\cZ(t,s)})
                 -g(t,s,y(s),\cl{\cZ}(t,s),\dbP_{y(s)}, \dbP_{\cZ(t,s)})=\tilde{\tilde{L}}(t,s)\d \cZ(t,s),\\
             & |\tilde{\tilde{L}}(t,s)|\leq \phi(|y(s)|\vee \cW_2(\dbP_{y(s)},\d_0))(1+|\cZ(t,s)|+|\bar{\cZ}(t,s)|+\|\cZ(t,s)\|_{L^2(\O)}),\  s\in [t, T].
             \end{aligned}
             \right.
             \end{equation*}
             Since $(y(\cd),z(\cd,\cd)),(Y(\cd),\cZ(\cd,\cd)), (\overline{Y}(\cd),\overline{\cZ}(\cd,\cd))
             \in \sB_\e(R_1,R_2)$,
             it follows that
             \begin{equation*}
              \begin{aligned}
              \|\tilde{\tilde{L}}(t,\cd)\|_{\overline{\cZ}^2_{\dbF}( [t,T])}
              &\leq \phi(\|y(\cd)\|_{L_{\dbF}^\i[T-\e,T]})(1+2\|\cZ(t,\cd)\|_{ \cZ^2_{\dbF}([t,T])}
             +\|\bar{\cZ}(t,\cd)\|_{ \cZ^2_{\dbF}([t,T])})\\
             &\leq \phi(R_1)(1+3\sqrt{R_2}).
             \end{aligned}
             \end{equation*}
             Consequently, the process
              $\tilde{\tilde{W}}(r;t)\deq W(r)-\int_t^r\tilde{\tilde{L}}(t,s)ds,\  r\in[t,T]$
             is a standard Brownian motion under the probability
            $d\tilde{\tilde{\dbP}}^t\deq \cE(\tilde{\tilde{L}}(t,\cd))^T_td\dbP.$
             Thereby, (\ref{4.18}) can be rewritten as
             \begin{equation*}
             \begin{aligned}
               &\d \cY(t,r)+\int_r^T\d \cZ(t,s)d\tilde{\tilde{W}}(s;t)=\int_r^T \tilde{\tilde{J}} (t,s)ds,\ r\in[t,T],
               \end{aligned}
             \end{equation*}
             which implies, for $ r\in[t,T],$
             \begin{equation*}
              |\d \cY(t,r)|^2+\tilde{\tilde{\dbE}}^t_r\[\int_r^T|\d \cZ(t,s)|^2ds\]=\tilde{\tilde{\dbE}}^t_r\[\(\int_r^T |\tilde{\tilde{J}}(t,s)|ds\)^2\].
             \end{equation*}
             In particular, let $r=t$,
             \begin{equation*}
              |\d Y(t)|^2+\tilde{\tilde{\dbE}}^t_t\[\int_t^T|\d \cZ(t,s)|^2ds\]=\tilde{\tilde{\dbE}}^t_t\[\(\int_t^T |\tilde{\tilde{J}}(t,s)|ds\)^2\].
             \end{equation*}
            Notice that, for  $T-\e\leq t\leq s\leq T$,
             \begin{equation*}
             \begin{aligned}
             |\tilde{\tilde{J}}(t,s)|&\leq
             \phi(|y(s)|\vee|\overline{y}(s)|\vee \cW_2(\dbP_{y(s)},\d_0)\vee \cW_2(\dbP_{\overline{y}(s)},\d_0) )\\
              &\q  \cd\[(1+| \overline{\cZ}(t,s)|+\cW_2(\dbP_{\cZ(t,s)}, \d_0)+\cW_2(\dbP_{\overline{\cZ}(t,s)}, \d_0))
              (|\d y(s)|+\cW_2(\dbP_{y(s)},\dbP_{\overline{y}(s)}))\\
              &\qq   +(1+\cW_2(\dbP_{\cZ(t,s)},\d_0)^\a+\cW_2(\dbP_{\overline{\cZ}(t,s)},\d_0)^\a) \cW_2(\dbP_{\cZ(t,s)},\dbP_{\overline{\cZ}(t,s)})\] \\
               &\leq\phi(R_1)\[ (1+|\overline{\cZ}(t,s)|+\|\cZ(t,s)\|_{L^2(\O)}+\|\overline{\cZ}(t,s)\|_{L^2(\O)}  ) (|\d y(s)|+\|\d y(s)\|_{L^2(\O)})\\
               &\qq + (1+\|\cZ(t,s)\|_{L^2(\O)}^\a+\|\overline{\cZ}(t,s)\|_{L^2(\O)}^\a)
               \|\d\cZ(t,s)\|_{L^2(\O)} \].
               \end{aligned}
             \end{equation*}
            Consequently, we have from H\"{o}lder's inequality that
            \begin{equation*}
            \begin{aligned}
              & |\d Y(t)|^2+\tilde{\tilde{\dbE}}^t_t\[\int_t^T|\d \cZ(t,s)|^2ds\]\\
              &\leq 8\phi(R_1)^2\Big\{\tilde{\tilde{\dbE}}^t_t\[ \( \int_t^T(1+|\overline{\cZ}(t,s)|^2
              + \|\cZ(t,s)\|^2_{L^2(\O)}+\|\overline{\cZ}(t,s)\|^2_{L^2(\O)}   )ds \)^2     \] \Big\}^\frac{1}{2}\\
              &\qq \cdot\Big\{ \tilde{\tilde{\dbE}}^t_t\[ \( \int_t^T(|\d y(s)|^2+\|\d y(s)\|^2_{L^2(\O)})ds \)^2     \] \Big\}^\frac{1}{2}\\
              &\q + 6\phi(R_1)^2 \int_t^T(1+\|\cZ(t,s)\|_{L^2(\O)}^{2\a}+\|\overline{\cZ}(t,s)\|_{L^2(\O)}^{2\a}) ds
              \cd \int_t^T   \|\d\cZ(t,s)\|^2_{L^2(\O)}  ds.
              \end{aligned}
             \end{equation*}

Now,	for each $t\in[T-\e,T]$,   by the properties (\ref{2.5}) and (\ref{4.3.1}), we have that there exists a positive constant $c_2$, independent of $t$, such that
	\begin{equation*}
		\begin{aligned}
			\|\overline{\cZ}(t)\cd \tilde{\tilde{W}}(t)\|^4_{ \text{BMO}_{\tilde{\tilde{\dbP}}^t}[t,T]}
			\leq c_2\|\overline{\cZ}(t)\cd W\|^4_{\text{BMO}_{\dbP}([t,T])}
			= c_2\|\overline{\cZ}(t,\cd)\|^4_{\cZ^2_{\dbF} [t,T]}
			\leq c_2\|\overline{\cZ}(\cd,\cd)\|^4_{ \cZ^2_{\dbF}(\D[T-\e,T])}.
		\end{aligned}
	\end{equation*}
	According to energy inequality and Jensen's inequality, and notice that  both the pairs $(Y(\cd),\cZ(\cd,\cd))$ and  $(\overline{Y}(\cd),\overline{\cZ}(\cd,\cd))$ belong to the space
	$\sB_\e(R_1,R_2)$, one has that for any $\t\in\sT[T-\e,T]$,
	\begin{equation*}
		\begin{aligned}
			&\tilde{\tilde{\dbE}}^t_\t\[ \( \int_\t^T(|\d y(s)|^2+\|\d y(s)\|^2_{L^2(\O)})ds \)^2 \]\leq 4 \e^2\|\d y(\cd)\|^4_{L^\i_{\dbF}[T-\e,T]},
		\end{aligned}
	\end{equation*}
	and
	\begin{equation*}
		\begin{aligned}
\ds 			&\tilde{\tilde{\dbE}}^t_\t\Big\{ \( \int_\t^T(1+|\overline{\cZ}(t,s)|^2
			+ \|\cZ(t,s)\|^2_{L^2(\O)}+\|\overline{\cZ}(t,s)\|^2_{L^2(\O)}   )ds \)^2     \Big\}  \\
	\ns\ds 		&\leq  4\bigg\{T^2+ \tilde{\tilde{\dbE}}^t_\t  \Big\{ \(\int_\t^T|\overline{\cZ}(t,s)|^2ds\)^2     \Big\}
			+\dbE  \Big\{\dbE_\t \(\int_\t^T |\cZ(t,s)|^2 ds  \)^2\Big\}
			+\dbE \Big\{  \dbE_\t  \(\int_\t^T |\overline{\cZ}(t,s)|^2 ds  \)^2\Big\} \bigg\}\\
	\ns\ds 		&\leq  4\bigg\{T^2+ \| \cl\cZ(t)\cd\tilde{\tilde{W}}(t)\|^4_{\text{BMO}_{\tilde{\tilde{\dbP}}^t}[t,T]}
			+\|\overline{\cZ}\cd W\|^4_{ \text{BMO}(\D[T-\e,T])}
			+\|\cZ\cd W\|^4_{ \text{BMO} (\D[T-\e,T])}
			\bigg\}\\
			%
			%
			&\leq  4 \bigg\{T^2+(c_2+1)\|\overline{\cZ}(\cd,\cd)\|^4_{ \cZ^2_{\dbF}(\D[T-\e,T])}
                +\|\cZ(\cd,\cd)\|^4_{ \cZ^2_{\dbF}(\D[T-\e,T])}\bigg\}\\
			&\leq  4 \{T^2+(c_2+2)(R_2)^2\}.
		\end{aligned}
	\end{equation*}
	In addition,
	\begin{equation*}
		\begin{aligned}
		\ds 	& \int_\t^T(1+\|\cZ(t,s)\|_{L^2(\O)}^{2\a}+\|\overline{\cZ}(t,s)\|_{L^2(\O)}^{2\a}) ds
			\cd \int_t^T   \|\d\cZ(t,s)\|^2_{L^2(\O)}  ds\\
	\ns\ds 		&\leq \e^{1-\a}C_\a\(\int_\t^T(1+\|\cZ(t,s)\|_{L^2(\O)}^{2}+\| \cl\cZ(t,s)\|_{L^2(\O)}^{2}) ds\)^\a
			\cd  \|\d \cZ(\cd,\cd)\|^2_{ \cZ^2_{\dbF}(\D[T-\e,T])}\\
	\ns\ds 		&\leq \e^{1-\a}C_\a\(T+ \|\cZ(\cd,\cd)\|^2_{\cZ^2_{\dbF}(\D[T-\e,T])}+ \|\overline{\cZ}(\cd,\cd)\|^2_{\cZ^2_{\dbF}(\D[T-\e,T])}   \)^\a
			\cd  \|\d \cZ(\cd,\cd)\|^2_{\cZ^2_{\dbF}(\D[T-\e,T])}\\
		\ns\ds 	& \leq \e^{1-\a}C_\a\(T+  2R_2  \)^\a
			\cd  \|\d \cZ(\cd,\cd)\|^2_{\cZ^2_{\dbF}(\D[T-\e,T])},
		\end{aligned}
	\end{equation*}
	where $C_\a$ depends only on $\a$ and $T$.  Hence, we arrive at
	\begin{equation}\label{4.25}
		\begin{aligned}
			& |\d \cY(t,\t)|^2+\tilde{\tilde{\dbE}}^t_\t\int_\t^T|\d \cZ(t,s)|^2ds
			\leq C_0\Big\{ \e \|\d y(\cd)\|^2_{L^\i_{\dbF}[T-\e,T]}+ \e^{1-\a}
			\|\d \cZ(\cd,\cd)\|^2_{\cZ^2_{\dbF}(\D[T-\e,T])}  \Big\}.
		\end{aligned}
	\end{equation}
	Here $C_0$ depends on $\a, c_2, R_1,R_2,T$ and $\phi(\cd)$. Now, by	choosing $\e$ small enough, it follows
	\begin{equation*}
		\begin{aligned}
		 	&  \|\d Y(\cd)\|^2_{L^\i_{\dbF}[T-\e,T]}+\|\d \cZ(\cd,\cd)\|^2_{ \cZ^2_{\dbF}(\D[T-\e,T])}
			\leq \frac{1}{2} \|\d y(\cd)\|^2_{L^\i_{\dbF}[T-\e,T]}\\
		 	& \leq \frac{1}{2} \(\|\d y(\cd)\|^2_{L^\i_{\dbF}[T-\e,T]}+ \|\d z(\cd,\cd)\|^2_{\cZ^2_{\dbF}(\D[T-\e,T])}\),
		\end{aligned}
	\end{equation*}
	which implies that
	\begin{equation*}
		\begin{aligned}
			&  \|\d Y(\cd)\|^2_{L^\i_{\dbF}[T-\e,T]}+\|\d Z(\cd,\cd)\|^2_{\cZ^2_{\dbF}(\D[T-\e,T])}
			\leq \frac{1}{2} \(\|\d y(\cd)\|^2_{L^\i_{\dbF}[T-\e,T]}+ \|\d z(\cd,\cd)\|^2_{\cZ^2_{\dbF}(\D[T-\e,T])}\).
		\end{aligned}
	\end{equation*}
	Thus $\G$ is a contraction mapping on the space $\sB_\e(R_1,R_2)$.
	Thereby, BSVIE (\ref{1.1}) admits  a unique adapted solution
	$(Y(\cd), Z(\cd,\cd))\in \sB_\e(R_1, R_2)$.
%
%
\end{proof}

\subsection{Proof of \autoref{pro4.4}}
\begin{proof}
For any given $(y^{N}(\cd),z^{N}(\cd,\cd))\in {L^\i_{\dbF}([0,T];\dbR^N)}\ts   \cZ^2_{\dbF}(\D[0,T];(\dbR^{N\ts d})^N)$, the following multi-dimensional  BSVIE,
for $i=1,\cds,N$,
         \begin{equation}\label{6.311}
         \begin{aligned}
          Y^{N,i}(t)&=\psi^i(t)+\int_t^Tg(t, s,y^{N,i}(s),Z^{N,i,i}(t,s), \frac{1}{N}\sum\limits_{i=1}^N \d_{y^{N,i}(s)})ds\\
                &\q-\int_t^T\sum\limits_{j=1}^NZ^{N,i,j}(t,s)dW^j(s),\q t\in[0,T]
            \end{aligned}
         \end{equation}
        has a unique adapted solution $(\mathbf{Y}^N(\cd),\mathbf{Z}^N(\cd,\cd))=(Y^{N,i}(\cdot), Z^{N,i,j}(\cdot, \cdot))_{1\leq i,j\leq N}$.

         Indeed, for each  $t\in[0,T]$, consider a BSDE
         \begin{align*}
         \cY^{N,i}(t,s)&=\psi^i(t)+\int_s^Tg(t, r,y^{N,i}(r),\cZ^{N,i,i}(t,r), \frac{1}{N}\sum\limits_{i=1}^N \d_{y^{N,i}(r)})dr\\
                &\q-\int_s^T\sum\limits_{j=1}^N\cZ^{N,i,j}(t,r)dW^j(r),\q s\in[t,T].
         \end{align*}
         For each $i$, the above equation is a one-dimensional QBSDE  (due to $ y^N(\cd)=(y^{N,1},\cds,y^{N,N})$  being given). According to
         \cite[Lemma A.1]{Fan-Hu-Tang-2023},
         this BSDE possesses a unique adapted solution $ (\cY^{N,i}(t,\cd),\cZ^{N,i}(t,\cd))=(\cY^{N,i}(t,\cd),(\cZ^{N,i,j}(t,\cd))_{j=1,\cds,N}).$
         Moreover,  for $s\in[t,T]$ and any stopping time $\t\in\sT[t,T]$,
         \begin{equation}\label{4.13}
         |\cY^{N,i}(t,s)|\leq \frac{\ln2}{\g}+\|\psi(t)\|_{L^\i_{\sF_T}[0,T] }+\|\ell(\cd,\cd)\|_{L^\i_{\dbF}(\D[0,T])}
                        +(\b+\frac{\b_0}{N})\|y^{N}(t,\cd)\|_{L^\i_{\dbF}[t,T]},
         \end{equation}
         and
         \begin{equation}\label{4.14}
         \begin{aligned}
          \dbE_\t\[ \int_\t^T \sum\limits_{j=1}^N| \cZ^{N,i,j}(t,s)|^2ds\]
         &\leq \frac{1}{\g^2}\exp\(2\g\|\psi(\cd)\|_{L^\i_{\sF_T}[0,T]}\)
         +\frac{1}{\g}\exp\(2\g\Big\| \cY^{N,i}(t,\cd)  \Big\|_{L^\i_{\dbF}[t,T]}\)  \\
         &\qq\cd\( 1+2\|\ell(\cd,\cd)\|_{L^2_{\dbF}(\D[0,T])}    +(\b+\frac{\b_0}{N})\|y^{N}(t,\cd)\|_{L^\i_{\dbF}[t,T]}\).
         \end{aligned}
         \end{equation}
         Thanks to \autoref{pro3.1},  $(Y^{N,i}(\cd), Z^{N,i,j}(\cd,\cd))_{1\leq i,j\leq N}$ admits a unique solution to Eq. \rf{6.311}
         and
         \begin{align*}
          Y^{N,i}(t)=\cY^{N,i}(t,t), \  Z^{N,i,j}(t,s)=\cZ^{N,i,j}(t,s), \ (t,s)\in \D[0,T].
         \end{align*}
         Meanwhile, thanks to (\ref{4.13})-(\ref{4.14}) and \autoref{ass4.1-1}, we have, for $t\in[0,T],$
 \begin{equation}\label{4.15-1}
          \Big\|\cY^{N}(t,\cd)  \Big\|_{L^\i_{\dbF}[t,T]}\leq \frac{N\ln2}{\g}+N(K_1+\sqrt{K_3})
                        +(N\b+ \b_0 )\|y^{N}(t,\cd)\|_{L^\i_{\dbF}[t,T]}(T-t),
         \end{equation}
  \begin{equation}\label{4.15-2}
          |Y^{N}(t)| \leq \frac{N\ln2}{\g}+N(K_1+\sqrt{K_3})
                        +(N\b+ \b_0 )\|y^{N}(t,\cd)\|_{L^\i_{\dbF}[t,T]}(T-t),
         \end{equation}
and
\begin{equation}\label{4.16-1}
         \begin{aligned}
          \|Z^N(\cd,\cd)\|^2_{ \cZ^2_{\dbF}(\D[t,T])}
          &\leq \frac{N}{\g^2}\exp\(2\g  K_1\)
         +\frac{N}{\g}\exp\(2\g \Big\| \cY^{N}(t,\cd)  \Big\|_{L^\i_{\dbF}[t,T]}  \)  \\
         &\qq\cd\( 1+2\sqrt{K_3 }   +(\b+\frac{\b_0}{N})\|y^{N}(t,\cd)\|_{L^\i_{\dbF}[t,T]}(T-t)\),
         \end{aligned}
         \end{equation}
          where  $\cY^{N}=(\cY^{N,1},\cds,\cY^{N,N}),\ Y^{N}=(Y^{N,1},\cds,Y^{N,N}),\ Z^{N}=(Z^{N,1},\cds,Z^{N,N})$.

Define
    \begin{equation*}
     \begin{aligned}
    &\widehat{R}_1\deq \frac{N\ln2}{\g}+N(K_1+\sqrt{K_3}),\\
    &\widehat{R}_2\deq \frac{N}{\g^2}\exp\(2\g  K_1\)
         +\frac{N}{\g}\exp\(4\g \widehat{R}_1 \) (1+2\sqrt{K_3}),\\
    & \e_1\deq  \min\bigg\{\frac{1}{2(N\b+  \b_0  )},
      \frac{\widehat{R}_2 \g\exp\(-4\g \widehat{R}_1\)}{2\widehat{R}_1( N\b+ \b_0 )}
        \bigg\}.
   \end{aligned}
   \end{equation*}
According to (\ref{4.15-1})-(\ref{4.16-1}), if for  $\e\in(0,\e_1]$,
\begin{align*}
 \| y^{N}(\cd)\|_{L^\i_{\dbF}([T-\e,T];\dbR^N)}\leq 2\widehat{R}_1,
\end{align*}
then it follows
\begin{align*}
 \| Y^{N}(\cd)\|_{L^\i_{\dbF}([T-\e,T];\dbR^N)}\leq 2\widehat{R}_1,\q  \|Z^N(\cd,\cd)\|^2_{ \cZ^2_{\dbF}(\D[T-\e,T];(\dbR^{N\ts d})^N)}\leq 2\widehat{R}_2.
\end{align*}
This means that for the map
\begin{align*}
&\Upsilon(y^N(\cd),z^N(\cd,\cd))\deq (Y^N(\cd),Z^N(\cd,\cd)),
 \end{align*}
$\forall (y^N(\cd),z^N(\cd,\cd))\in  L^\i_{\dbF}([T-\e,T];\dbR^N)\ts   \cZ^2_{\dbF}(\D[T-\e,T]; (\dbR^{N\ts d})^N),$
 if $\forall (y^N(\cd),z^N(\cd,\cd))\in \sB_{N,\e}(\widehat{R}_1, \widehat{R}_2),$  then we have
\begin{equation*}
\Upsilon(y(\cd),z(\cd,\cd))\in \sB_{N,\e}(\widehat{R}_1, \widehat{R}_2),
\end{equation*}
where
\begin{equation}\label{3.2222}
	\begin{aligned}
		\sB_{N,\e}(R_1,R_2):=  \Big\{ &
		(P(\cd), Q(\cd,\cd))\in L_\dbF^\infty([T-\e,T];\dbR^N)\ts \cZ^2_\dbF(\D[T-\e,T]; (\dbR^{N\ts d})^N) \Big|\\
		& \| P(\cd)\|_{L_\dbF^\infty([T-\e,T];\dbR^N)}\les R_1\q
		\text{and}\q \|Q(\cd,\cd)\|^2_{\cZ^2_\dbF( \D[T-\e,T];(\dbR^{N\ts d})^N) }\les R_2\Big\},
	\end{aligned}
\end{equation}
endowed with the norm
\begin{equation*}
	\|(P(\cd), Q(\cd,\cd))\|_{\sB_{N,\e}}\deq \sqrt{\|P(\cd)\|^2_{L_{\dbF}^\i([T-\e,T];\dbR^N)}
		+ \|Q(\cd,\cd)\|^2_{\cZ^2_{\dbF}(\D[T-\e,T];(\dbR^{N\ts d})^N)}}.
\end{equation*}

Similar to \autoref{pro3.4}, one can show $\Upsilon$ is contractive.  Hence, Eq. \rf{1.3}
has a local solution. Based on the method used in the proof of \autoref{th3.4} to extend the local solution to a global one, we can obtain that Eq. \eqref{1.3} admits a unique global adapted solution.

Next, let us show that the bounds of  $||Y^{N,i}(\cd)||_{L_\dbF^\infty[0,T]}$ and
$||Z^{N,i,j}(\cd,\cd)||_{\cl{\cZ}^2_\dbF(\D[0,T])}$ are independent of $N$.

Let  $(\mathbb{Y}^{N,i}(t,\cd), \mathbb{Z}^{N,i,j}(t,\cd))$ be the unique adapted solution to Eq. (\ref{4.3-111}). Recall  that
$$
Y^{N,i}(t)=\mathbb{Y}^{N,i}(t,t), \q  Z^{N,i,j}(t,s)=\mathbb{Z}^{N,i,j}(t,s), \q (t,s)\in\D[0,T]
$$
(see (\ref{4.4111})).
Set $\widehat{\th}=\b^2+\b_0^2+1$. Making a similar deduction as (\ref{3.20}), one has
\begin{equation}\label{5.27}
\begin{aligned}
&  | \mathbb{Y}^{N,i}(t,s)|^2
\leq e^{\th T}[(K_1)^2+K_2]+ e^{\th T} \int_s^T\|Y^{N,i}(\cd)\|^2_{L^\i_{\dbF}[r,T]}dr
 +e^{\th T}\int_t^T\frac{1 }{N}\sum\limits_{i=1}^N\|Y^{N,i}(\cd)\|^2_{L^\i_{\dbF}[r,T]}dr.
\end{aligned}
\end{equation}
Taking $s=t$, we have from Gronwall's inequality that
$$
 ||Y^{N,i}(\cd)||_{L_\dbF^\infty[0,T]}\leq \overline{C},
$$
where $\overline{C}$ depends only on $\b,\b_0, K_1,K_2,T$, independent of $N$.
Substituting the above inequality into (\ref{5.27}) yields
$$
| \mathbb{Y}^{N,i}(t,s )|\leq C_0,\  s\in[t,T].
$$

              Recall the definition of the function  $\Xi$ (see (\ref{3.9-111})).
              Applying  It\^{o}'s formula to $\Xi( \dbY^{N,i}(t,\cd))$ provides,  for any stopping time $\t\in\sT[t,T],$
           \begin{equation*}
     \begin{aligned}
          &\dbE_\t[ \Xi(\dbY^{N,i}(t,\t))]+\frac{1}{2}\mathbb{E}_\t\bigg[\int_{\t}^{T}\sum\limits_{j=1}^N|\dbZ^{N,i,j}(t,s)|^2ds \bigg]   \\
            &\les \dbE_\t[  \Xi(\psi^i(t))] +\mathbb{E}_\t\bigg[\int_{\t}^{T}|\Xi'(\dbY^{N,i}(t,s))|\\
           &\q
           \cd\Big(\ell(t,s)+\b|Y^{N,i}(s)|+\b_0 \cW_2(\mu^N(s), \delta_{0})\Big)ds\bigg]\\
           &\les  \Xi(K_1) +\mathbb{E}_\t\bigg[\int_{\t}^{T}|\Xi'(C_0)|
            (\ell(t,s)+(\b+\b_0)\overline{C}] ) ds\bigg].
                       \end{aligned}
              \end{equation*}
           Hence,
           \begin{equation*}
         \begin{aligned}
            \frac{1}{2}\mathbb{E}_\t\bigg[\int_{\t}^{T}\sum\limits_{j=1}^N|\dbZ^{N,i,j}(t,s)|^2ds \bigg]
           &\les     \Xi(K_1)  + |\Xi'(C_0)|[K_2+(\b+\b_0)\overline{C}T]\deq \overline{\overline{C}}.
            \end{aligned}
         \end{equation*}
         Notice that $Z^{N,i,j}(t,s)=\mathbb{Z}^{N,i,j}(t,s),\ (t,s)\in\D[0,T]$, we arrive at
          $$||Z^{N,i,j}(\cd,\cd)||^2_{\cl{\cZ}^2_\dbF(\D[0,T])}\leq \overline{\overline{C}},$$
        where $\overline{\overline{C}}$  is independent of $N$.
\end{proof}

%
%
%

\section{Conclusions}\label{sec 6}

This paper studies  the well-posedness  of mean-field BSVIEs and the convergence rate of related particle systems. When the generator is of  linear growth in the variables $(y,z,\mu,\nu)$, the convergence rate is $\cQ(N)$. However, for generators with quadratic growth in $z$, the convergence rate becomes $\cO(N^{-\frac{1}{2\l}})$ for some $\l>1$ when $g$ is independent of the law of $Z(\cd,\cd)$. An extra constant $\lambda$ appears   in the denominator of the exponent for the quadratic growth case. This phenomenon arises primarily from applying Girsanov's theorem to eliminate the effect of the quadratic term. Whether the convergence rate for the quadratic case can achieve $\cO(N^{-\frac{1}{2}})$  is an interesting topic, which we leave for future research.


\begin{thebibliography}{90}
\addtolength{\itemsep}{-1.0ex}
%
%
\bibitem{Bahali-Eddahbi-Ouknine-2017}
 K.~Bahlali, M.~Eddahbi, and Y.~Ouknine,
 \it Quadratic BSDE with $L^2$-terminal data: Krylov's estimate,
     It\^{o}-Krylov's formula and existence results,
 \sl Ann. Probab.
 \rm 45 (2017), 2377--2397.



%
%
%
%


 \bibitem{Briand-Delyon-Hu-Pardoux-Stoica-2003}
  P.~Briand, B.~Delyon, Y.~Hu, E.~Pardoux, and L.~Stoica,
\it  $L^p$ solutions of backward stochastic differential equations,
\sl  Stochastic Process. Appl.
\rm  108 (2003), 109--129.


\bibitem{Briand-Hu-2006}
P.~Briand, and Y.~Hu,
\it BSDE with quadratic growth and unbounded terminal value,
\sl Probab. Theory Related Fields.
\rm 136 (2006), 604--618.

\bibitem{Briand-Hu-2008}
P.~Briand, and Y.~Hu,
\it Quadratic BSDEs with convex generators and unbounded terminal conditions,
\sl Probab. Theory Related Fields.
\rm 141 (2008), 543--567.



\bibitem{Duffie-Epstein-1992}
D.~Duffie, and L.~G.~Epstein,
\it  Stochastic differential utility,
\sl Econometrica.
\rm 60 (1992), 353--394.


\bibitem{El Karoui-Peng-Quenez-1997}
  N. El Karoui, S.~Peng, and M.~C.~Quenez,
\it Backward stochastic differential equations in finance,
\sl Math. Finance.
\rm 7 (1997), 1--71.

\bibitem{Fan-Hu-Tang-2019}
 S.~Fan, Y.~Hu, and S.~Tang,
 \it On the uniqueness of solutions to quadratic BSDEs with non-convex generators and unbounded terminal conditions,
  \sl  C. R. Math. Acad. Sci. Paris.
  \rm 358 (2019), 227--235.

\bibitem{Fan-Hu-Tang-2023}
S.~Fan, Y.~Hu, and S.~Tang,
\it Multi-dimensional backward stochastic differential equations of diagonally quadratic generators: the general result,
\sl J. Differential Equations,
\rm  368 (2023), 105--140.


\bibitem{Fan-Wang-Yong-2023}
S.~Fan, T.~Wang, and J.~Yong,
\it   Multi-dimensional super-linear backward stochastic Volterra integral equations,
\sl J. Differential Equations.
\rm   437(25)  (2025), 113350.


\bibitem{Fournier-Guillin-2015}
N.~Fournier  and A.~Guillin,
\it On the rate of convergence in Wasserstein distance of the empirical measure,
\sl Probab. Theory Related Fields.
\rm 162 (2015), 707--738.


\bibitem{Hamaguchi-AAP-2025}
Y. Hamaguchi,
\it  Weak well-posedness of stochastic Volterra equations with completely monotone kernels and nondegenerate noise,
\sl Ann. Appl. Probab.,
\rm  35 (2025),  1442--1488.

\bibitem{Hamaguchi-JDE-2023}
Y. Hamaguchi,
\it Variation of constants formulae for forward and backward stochastic Volterra integral equations,
\sl  J. Differential Equations,
\rm  343 (2023), 332--389.



\bibitem{Hao-Hu-Tang-Wen-2022}
T.~Hao, Y.~Hu, S.~Tang, and J.~Wen,
\it Mean-field backward stochastic differential equations and nonlocal PDEs with quadratic growth,
\sl  Ann. Appl. Probab.
\rm 35(3)  (2025), 2128--2174.



\bibitem{He-Wang-Yan-1992}
S.~He, J.~Wang, and J.~Yan,
\emph{Semimartingale Theory and Stochastic Calculus},
  Science Press, Beijing, New York, 1992.


\bibitem{Hu-Tang-2016}
Y.~Hu and S.~Tang,
\it Multi-dimensional backward stochastic differential equations of diagonally quadratic generators,
\sl Stochastic Process. Appl.
\rm 126 (2016), 1066--1086.




\bibitem{Jiang-Li-Wei-2023}
W.~Jiang, J.~Li, and Q.~Wei,
\it   General mean-field BSDEs with diagonally quadratic generators in multi-dimension,
\sl   Discrete Contin. Dyn. Syst.
\rm 44 (2024), 2957--2984.

\bibitem{Kazamaki-1994}
N.~Kazamaki,
\emph{Continuous Exponential Martingales and \textit{BMO}}, 
Springer-Verlag, New York, 1994. 


\bibitem{Kobylanski-2000}
M.~Kobylanski,
\it Backward stochastic differential equations and partial differential equations with quadratic growth,
\sl Ann. Probab.
\rm 28 (2000), 558--602.





\bibitem{Lepeltier-Martin-1997}
J.~Lepeltier  and J.~San~Martin,
\it Backward stochastic differential equations with continuous coefficient,
\sl Statist. Probab. Lett.
\rm 32 (1997), 425--430.




\bibitem{Li-2018}
J.~Li,
\it  Mean-field forward and backward SDEs with jumps and associated nonlocal quasi-linear integral-PDEs,
\sl  Stochastic Process. Appl.
\rm  128 (2018), 3118--3180.


\bibitem{Lin-2002}
J.~Lin,
\it  Adapted solution of a backward stochastic nonlinear Volterra integral equation,
\sl  Stoch. Anal. Appl.
\rm  20(1) (2002), 165--183.






\bibitem{Pardoux-Peng-1990}
E.~Pardoux and S.~Peng,
\it Adapted solution of a backward stochastic differential equation,
\sl Systems Control Lett.
\rm 14(1) (1990), 55--61.



\bibitem{Peng-1998}
S.~Peng,
\emph{Open problems on backward stochastic differential equations},
in: S.~Chen, X.~Li, J.~Yong, X.Y.~Zhou (eds.),
\emph {Control of Distributed Parameter and Stochastic Systems},
Kluwer Acad. Publ., Boston, MA, 1998, pp.~265--273.


\bibitem{Shi-Wang-2012}
Y.~Shi and T.~Wang,
\it  Solvability of general backward stochastic Volterra integral equations,
\sl  J. Korean Math. Soc.
\rm  49 (2012), 1301--1321.


\bibitem{Shi-Wang-Yong-2013}
Y.~Shi, T.~Wang, and  J.~Yong,
\it  Mean-field backward stochastic Volterra integral equations,
\sl  Discr. Cont. Dyn. Syst. Ser. B.
\rm  18(7) (2013), 1929--1967.

\bibitem{Shi-Wen-Xiong-2020}
Y.~Shi, J.~Wen, and J.~Xiong,
\it Backward doubly stochastic Volterra integral equations and their applications,
\sl  J. Differential Equations.
\rm  269 (2020), 6492--6528.



\bibitem{Wang-SD-2021}
H. Wang,
\it Extended backward stochastic Volterra integral equations, quasilinear parabolic equations, and Feynman-Kac formula,
\sl Stoch. Dyn.,
\rm  21 (2021), no. 1, Paper No. 2150004, 37 pp.


\bibitem{Wang-Sun-Yong-2019}
H.~Wang, J.~Sun, and J.~Yong,
\it Recursive utility processes, dynamic risk measures and quadratic backward stochastic Volterra integral equations,
\sl Appl. Math. Optim.
\rm 84 (2021), 145--190.


\bibitem{Wang-2020}
T.~Wang,
\it Necessary conditions of Pontraygin's type for general controlled stochastic Volterra integral equations,
\sl ESAIM Control Optim. Calc. Var.
\rm  26 (2020), paper no. 16, 29 pp.


\bibitem{Wang-Yong-2015}
T.~Wang, and J.~Yong,
\it Comparison theorems for some backward stochastic Volterra integral equations,
\sl Stochastic Process. Appl.,
\rm 125(5) (2015), 1756--1798.


\bibitem{Xing-Zitkovic}
H.~Xing,  and G.~Zitkovic,
\it A class of globally solvable Markovian quadratic BSDE systems and applications,
\sl Ann. Probab.
\rm 46 (2018), 491--550.





\bibitem{Yong-2008}
J.~Yong,
\it Well-posedness and regularity of backward stochastic Volterra integral equations,
\sl  Probab. Theory Related Fields.
\rm 142 (2008), 21--77.





%
%

%
%
%
%
%
%
%
%
%
%
%
%
%
%
%
%
%
%
%
%
%
%
%
%
%
%
%
%
%
%
%

%
%
%
%
%
%
%




%
%
%
%
%
%
%
%
%
%
%
%
%
%
%
%
%
%
%
%
%
%
%
%
%
%
%
%
%
%
%
%
%
%
%
%
%
%
%
%
%
%
%
%
%
%
%
%
%
%
%
%
%
%
%
%
%
%
%
%
%
%
%
%
%
%
%
%
%
%
%
%
%
%
%
%
%
%
%
%
%
%
%
%
%
%
%
%
%


%
%
\end{thebibliography}
\end{document}